\documentclass[12pt]{article}
\usepackage{stmaryrd}
\usepackage{amsmath}
\usepackage{graphicx}
\usepackage{color}

\usepackage{authblk}
\usepackage[title]{appendix}
\usepackage{amsfonts,amscd,amssymb, mathtools,mathrsfs, dsfont}

\usepackage{pgfplots}
\usepackage{float}

\usepackage{url}
\usepackage{hyperref}
\usepackage{cite}
\usepackage{array}

\hypersetup{
colorlinks=true, 
breaklinks=true, 
urlcolor= blue, 
linkcolor= red, 
citecolor=purple, 
pdftitle={}, 
pdfauthor={}, 
pdfsubject={} 
}

\tikzset{elegant/.style={smooth,thick,samples=50,cyan}}
\tikzset{eaxis/.style={->,>=stealth}}

\setlength{\oddsidemargin}{-0.08in}
\setlength{\textheight}{9.0in}
\setlength{\textwidth}{6.5in}
\setlength{\topmargin}{-0.5in}
\bibliographystyle{plain}



\newif\ifblog
\newif\iftex
\blogfalse
\textrue

\newcommand{\thmref}[1]{Theorem~{\rm \ref{#1}}}

\newcommand{\figref}[1]{Figure~{\rm \ref{#1}}}



\def\wU{\widehat{U}}
\def\wwU{\widetilde{U}}

\def\d{{\rm d}}

\def\R{{\mathbb R}}
\def\RR{{\cal R}}

\def\Q{{\cal Q}}

\def\SS{{\cal S}}
\def\NS{{\cal NS}}
\def\HC{{\cal HC}}
\def\MC{{\cal MC}}
\def\LC{{\cal LC}}

\def\p{{\partial}}

\newcommand{\ep}{\varepsilon}
\newcommand{\al}{\alpha}

\newcommand{\la}{\lambda}

\newcommand{\si}{\sigma}
\newcommand{\ka}{\kappa}

\newcommand{\argmin}{\mathop{\rm argmin}\limits}

\def\hh{{\overline{h}}}
\def\uh{{u^{\overline{h}}}}

\def\uu{{\overline{u}}}
\def\uv{{\underline{v}}}

\def\fh{{\varphi^{\overline{h}}}}


\newtheorem{theorem}{Theorem}[section]
\newtheorem{lemma}[theorem]{Lemma}

\newtheorem{proposition}[theorem]{Proposition}

\newenvironment{proof}{\noindent {\sc Proof:}}{\strut\hfill $\Box$} 

\newcommand{\LL}{{\cal L}}
\newcommand{\TT}{{\cal T}}
\newcommand{\JJ}{{\cal J}}
\newcommand{\wt}{\widetilde}

\title{Optimal Consumption-Investment for General Utility with a Drawdown Constraint over a Finite-Time Horizon
\thanks{The work is supported by Guangdong Basic and Applied Basic Research Foundation (No. 2025A1515012146, and No. 2024A1515012430),
NNSF of China (No. 11901244).}}

\author[1]{Chonghu Guan}
\author[2]{Xinfeng Gu\thanks{Corresponding author. Email: martineuclid@gmail.com.}}
\author[2]{Wenhao Zhang}
\author[2]{Xun Li}

\affil[1]{School of Mathematics, Jiaying University, Meizhou 514015, China}
\affil[2]{Department of Applied Mathematics, The Hong Kong Polytechnic University, Hong Kong, China}
\date{}

\begin{document}
\maketitle

\begin{abstract}
We study an optimal investment and consumption problem over a finite-time horizon, in which an individual invests in a risk-free asset and a risky asset, and evaluate utility using a general utility function that exhibits loss aversion with respect to the historical maximum of consumption. Motivated by behavioral finance and habit formation theory, we model the agent's preference for maintaining a standard of living by imposing constraints on declines from the peak consumption level.

To solve the resulting Hamilton-Jacobi-Bellman (HJB) variational inequality, which is fully nonlinear, we apply a dual transformation, transforming the original problem into a linear singular control problem with a constraint. By differentiating the value function further, we reduce the constrained linear singular control problem to a linear obstacle problem. We prove the existence of a solution to the obstacle problem under standard constraints. It allows us to characterize the optimal consumption and investment strategies through piecewise analytical feedback forms derived from the dual formulation. 

Our analysis contributes to the literature on habit formation, drawdown constraints, and stochastic control by explicitly characterizing the time-dependent free boundaries and the associated optimal feedback strategies.
\bigskip

\noindent {\bf Mathematics Subject Classification.} 35R35; 91B70; 91B60.
\end{abstract}


\setlength{\baselineskip}{0.25in}

\section{Introduction}

Merton first studied optimal portfolio and consumption problems through utility maximization in \cite{Mer69}, and it has since become a classical topic in mathematical finance.
Subsequent research has expanded upon these foundations by considering various financial market models, utility functions, trading constraints, and the presence or absence of transaction costs. Initially, researchers focused on utility maximization with respect to consumption without considering consumption habit formation. However, since the 1970s, the study of consumption habit formation has become a central topic in mathematical finance. Detemple and Zapatero~\cite{DZ91} established optimal consumption-portfolio policies under habit-forming preferences, providing structural insights into optimal strategies for various utility functions. Detemple and Karatzas~\cite{DK03} extended this framework by introducing a non-addictive habit formation model, allowing consumption to fall below the habitual level, and analyzed the resulting optimal policies. Yu~\cite{Yu15} investigated utility maximization with addictive consumption habit formation in incomplete semimartingale markets, employing convex duality methods to prove the existence and uniqueness of optimal solutions. Yang and Yu~\cite{YY22} addressed a composite problem involving optimal entry timing and subsequent consumption decisions under habit formation, demonstrating that the value function satisfies a variational inequality and is the unique viscosity solution. These studies collectively advance our understanding of how habit formation influences consumption and investment decisions in various financial contexts.

Dybvig termed the case where the consumption rate is not allowed to fall below the habit as ``addictive" \cite{Dyb95}; otherwise, if the consumption rate is permitted to fall below the habit, it is referred to as ``non-addictive". Recent advancements in consumption and dividend optimization under drawdown constraints have been made by Arun~\cite{Arun12}, who extended the classical Merton problem by imposing a drawdown constraint on consumption, ensuring it does not fall below a fixed proportion of its historical maximum; Bahman, Bayraktar, and Young~\cite{BBY19}, who investigated optimal dividend distribution strategies under drawdown and ratcheting constraints, formulating the problem as a stochastic control task aimed at maximizing expected discounted utility until bankruptcy; Chen, Li, Yi, and Yu~\cite{CLYY23}, who addressed a finite-horizon utility maximization problem with a drawdown constraint on consumption, employing partial differential equation techniques to establish the existence and uniqueness of solutions; and Deng, Li, Pham, and Yu~\cite{DLPY22}, who explored an infinite-horizon optimal consumption problem with a path-dependent reference point, utilizing dual transformation methods to derive closed-form solutions for optimal strategies. Utility maximization problems with habit formation are more complex than their non-habit formation counterparts. 
The motivation behind considering habit formation is that an individual's satisfaction from consumption depends on previous consumption levels; a decline in consumption may lead to disappointment and discomfort. Various types of consumption references have been studied. One approach considers the weighted average of past consumption integrals, as found in Detemple and Karatzas~\cite{DK03} and Schroder and Skiadas~\cite{SS02}. Another approach focuses on the impact of the historical maximum consumption level. For instance, Arun~\cite{Arun12} extended the classical Merton problem by imposing a drawdown constraint on consumption, ensuring it does not fall below a fixed proportion of its historical maximum. Bahman, Bayraktar, and Young~\cite{BBY19} investigated optimal dividend distribution strategies under drawdown and ratcheting constraints, formulating the problem as a stochastic control task to maximize the expected discounted utility until bankruptcy. Chen, Li, Yi, and Yu~\cite{CLYY23} addressed a finite-horizon utility maximization problem with a drawdown constraint on consumption, employing partial differential equation techniques to establish the existence and uniqueness of solutions. Deng, Li, Pham, and Yu~\cite{DLPY22} explored an infinite-horizon optimal consumption problem with a path-dependent reference point, utilizing dual transformation methods to derive closed-form solutions for optimal strategies. Jeon and Park~\cite{JP21} generalized the work of Arun by considering general utility functions over an infinite horizon, and Jeon and Oh~\cite{JO22} extended this approach to a finite horizon model using a dual optimal stopping problem.

From a psychological and behavioral perspective, individuals often form consumption habits based on past peak consumption levels, and deviations below these peaks may cause discomfort or dissatisfaction. It motivates imposing a drawdown constraint, ensuring that consumption does not fall below a fixed proportion of its historical maximum. Rather than modeling the utility as a function of consumption relative to a reference point, our approach reflects the tendency of individuals to maintain a stable standard of living by enforcing minimum consumption standards through constraints.

Utility maximization problems have incorporated past consumption habits into the utility function. Deng, Li, Pham, and Yu~\cite{DLPY22} introduced an exponential utility to measure the distance between consumption control and a proportion of the past consumption maximum over an infinite horizon, aiming to depict the quantitative influence of relative changes on investor preferences.  Loss aversion and utility with a reference have recently garnered more attention in behavioral finance. Bilsen, Laeven, and Nijman~\cite{BLN20} examined a similar problem with a two-part utility function, treating conventional consumption habit formation as the consumption reference. Curatola~\cite{Cur17} studied a utility maximization problem under an S-shaped utility on consumption for a loss-averse agent with a reference generated by a specific integral of the past consumption process. Li, Yu, and Zhang~\cite{LYZ21} combined loss aversion on relative consumption with reference to historical consumption maxima using an S-utility over an infinite horizon to model different consumer feelings when consumption exceeds or falls below past spending maxima.

In contrast to the above works, which often assume CRRA-type utility functions to enable dimensionality reduction, we consider a general utility function \( U(c) \), assumed only to be strictly increasing, concave, and twice continuously differentiable, with marginal utility vanishing as consumption increases.
This generalization brings some technical and conceptual challenges to us. 
First, the standard techniques that reduce the HJB equation to a two-dimensional problem are no longer applicable. Instead, the problem becomes inherently three-dimensional in wealth \(x\), time \(t\), and the past consumption maximum \(h\).
This increase in dimensionality leads the free boundaries to become significantly more complex. The switching surfaces \( x^L(t,h) \), \( x^H(t,h) \), and \( x^*(t,h) \) are now full two-dimensional surfaces depending on both \(t\) and \(h\). We rigorously establish their continuity  on $[0,T)\times\R^+$ and monotonicity with respect to \(h\), and examine their behavior as \(t \to T\).
Second, establishing the strict concavity of the dual problem's solution and the asymptotic properties of its derivative presents a significant theoretical challenge in this study, requiring original analysis beyond previous results.

The remainder of this paper is organized as follows. In Section 2, we introduce the market model and formulate the optimal consumption problem, which involves general utility maximization with reference to the past spending maximum over a finite horizon. We also present the associated Hamilton-Jacobi-Bellman (HJB) variational inequality. Section 3 outlines our main results, detailing the solution's properties and the equation's free boundaries, along with their economic interpretations. In Section 4, we simplify the fully nonlinear HJB equation. Through dual transformation and variable substitution, we convert it into a linear equation with constant coefficients and a gradient constraint, resulting in a linear singular control problem. By differentiating the equation, we transform the gradient constraint variational inequality into a function constraint variational inequality, specifically a linear obstacle problem. Section 5 establishes the existence and uniqueness of the strong solution to the obstacle problem via the penalty approximation method and discusses the properties of the free boundary arising from the obstacle problem. In Section 6, we construct the classical solution to the singular control problem using the solution and the free boundary of the obstacle problem. Finally, in Section 7, we prove the verification theorem for the classical solution to the original HJB variational inequality and all associated thresholds of the wealth variable in analytical form with boundary conditions, utilizing the solution of the HJB equation.


\section{Model formulation}\label{sec:model}
\setcounter{equation}{0}

Let $(\Omega, \mathcal{F}, \mathbb{F}, \mathbb{P})$ be a filtered probability space, where the filtration $\mathbb{F} = (\mathcal{F}_t)_{t \in [0, T]}$ satisfies the usual conditions. The financial market consists of a risk-free asset $B_t$ and a risky asset $S_t$, whose prices evolve according to the stochastic differential equations
\[
\begin{aligned}
\mathrm{d}B_t &= r B_t\, \mathrm{d}t, \\
\mathrm{d}S_t &= \mu S_t\, \mathrm{d}t + \sigma S_t\, \mathrm{d}W_t,
\end{aligned}
\]
where $r \geq 0$ denotes the constant interest rate, and $\{W_t\}_{t \in [0, T]}$ is an $\mathbb{F}$-adapted Brownian motion. The constants $\mu > r$ and $\sigma > 0$ represent the drift and volatility of the risky asset, respectively.

Let $\Pi^t = \{\Pi_s\}_{s \in [t,T]}$ denote the investor’s allocation to the risky asset, and let $C^t = \{C_s\}_{s \in [t,T]}$ denote the consumption rate. Given an initial wealth $x > 0$, the wealth process $X_s$ evolves according to the self-financing equation:

\begin{equation}\label{Xt}
\left\{
\begin{array}{ll}
\d X_s=[r (X_s-\Pi_s)+\mu \Pi_s -C_s]\d s+\si \Pi_s \d W_s,\quad t\leq s\leq T, \\
\;X_t\;=x.
\end{array}
\right.
\end{equation}

Denote
$$H_s = \max\Big\{h, \max\limits_{t \leq \theta \leq s} C_\theta \Big\}$$
as the historical maximum of consumption, where $h > 0$ is the initial reference level.
Given a fixed proportion $0 \leq b \leq 1$, we say that the consumption strategy $\{C_s\}_{s \in [t,T]}$ satisfies a {\it drawdown constraint} if
\begin{equation}\label{HC}
 b H_s \leq C_s,\quad t \leq s \leq T.
\end{equation}

An admissible strategy $(\Pi^t, C^t)$ must satisfy the following conditions:
\begin{enumerate}
\item $(\Pi^t, C^t)$ is $\mathbb{F}$-progressively measurable, and the integrability condition $\mathbb{E}[\int_t^T (C_s + \Pi_s^2)\, \mathrm{d}s] < \infty$ holds;
\item $X_s \geq 0$ for all $s \in [t,T]$;
\item $C^t$ satisfies the drawdown constraint \eqref{HC}.
\end{enumerate}

The objective of the individual is to find an admissible strategy $(\Pi^t, C^t)$ that maximizes the utility of consumption and terminal wealth over the finite horizon $[t,T]$. The value function is defined as
\begin{equation}\label{value}
V(x,t,h) = \sup\limits_{\Pi^t, C^t} \mathbb{E} \left[\int_t^T e^{-\rho (s-t)} U(C_s)\, \mathrm{d}s + e^{-\rho (T-t)} U_T(X_T) \;\middle|\; X_t = x,\; H_t = h \right],
\end{equation}
where $\rho > 0$ is a constant discount rate. The terminal utility function $U_T(\cdot)$ is allowed to be identically zero as a special case.
For notational convenience, we will write $\mathbb{E}[\cdot \mid X_t = x,\; H_t = h]$ simply as $\mathbb{E}[\cdot]$ when there is no ambiguity.

In this paper, we assume that the utility function $U(\cdot)$ over consumption satisfies the following canonical conditions:
\begin{equation}\label{U1}
U(\cdot) \in C^2(\mathbb{R}^+),\quad U'(\cdot) > 0,\quad U''(\cdot) < 0,\quad U'(+\infty) = 0,
\end{equation}
\begin{equation}\label{U2}
U(c) \leq K\left(\frac{1}{1 - \gamma} c^{1 - \gamma} + 1\right) \quad \text{for some } 0 < \gamma < 1 \text{ and } K > 0.
\end{equation}

The above assumptions include several commonly used utility functions: the CRRA (Constant Relative Risk Aversion) utility $U(c) = \frac{1}{1 - \gamma} c^{1 - \gamma}$ with $\gamma > 0$, $\gamma \neq 1$; the logarithmic utility $U(c) = \ln(c)$; and the CARA (Constant Absolute Risk Aversion) utility $U(c) = -\frac{1}{\alpha} e^{-\alpha c}$ with $\alpha > 0$.

For the terminal utility function $U_T(\cdot)$, if $U_T(\cdot) \not\equiv 0$, we impose the same assumptions as in \eqref{U1}-\eqref{U2}.

Additionally, we require
\begin{equation}\label{U3}
U_T(x) \geq K\Big(\frac{1}{1-\theta} c^{1-\theta} - 1\Big) \quad \hbox{for\;some\;} \theta>1 \hbox{\;and\;} K>0, 
\end{equation}
to guarantee that the value function is bounded below by a power function.

Employing the standard viscosity theory, one demonstrates that the value function $V(x,t,h)$  defined in \eqref{value} stands as the sole viscosity solution to the following variational inequality, 
\begin{align}\label{V_pb}
\left\{
\begin{array}{ll}
\min\Big\{-V_t-\sup\limits_{\pi}\Big(\frac{1}{2}\si^2\pi^2 V_{x x } +(\mu-r)\pi V_x \Big)\\
\quad\quad\quad\quad
- \sup\limits_{bh \leq c\leq h}\Big(U(c)-c V_x \Big)-r x  V_x  +\rho V,\;-V_h\Big\}=0, \quad (x,t,h)\in \RR_T,\\
V(x ,h,T)=U_T (x ),\quad x>0,\;h>0,
\end{array}
\right.
\end{align}
where 
$$
\RR_T=\Big\{(x,t,h)\;\Big|\; x>\frac{b h}{r}(1-e^{-r(T-t)}),\;0\leq t< T,\;h>0\Big\}.
$$


\section{Main results}\label{sec:D}
\setcounter{equation}{0}

In this section, we present the main conclusions of the paper concerning the solution and free boundaries associated with Problem \eqref{V_pb}. By establishing the verification theorem, we derive key properties of the value function \eqref{value}, along with the corresponding optimal investment and consumption strategies.

Regarding the solvability of Problem \eqref{V_pb}, we have the following result:

\begin{theorem}\label{thm:V}
There exists a solution \( V \in C^{2,1,1}(\RR_T) \cap C(\RR_T \cup \{t = T\}) \) to Problem \eqref{V_pb} such that
\begin{align}\label{V}
 C_T \left( \frac{1}{1 - \theta} \left[ x - \frac{b h}{r} (1 - e^{-r(T - t)}) \right]^{1 - \theta} - 1 \right)
 \leq V(x, t, h) \leq C_T \left( \frac{1}{1 - \gamma} x^{1 - \gamma} + 1 \right), \\[2mm] \label{V1}
V_x > 0, \quad V_{xx} < 0, \\[2mm] \label{V2}
\lim_{x \to \frac{b h}{r} (1 - e^{-r(T - t)})} V_x = +\infty, \\[2mm] \label{V3}
\lim_{x \to +\infty} V_x = 0,
\end{align}
where \( 0 < \gamma < 1 \) and \( \theta > 1 \) are the parameters appearing in conditions \eqref{U2} and \eqref{U3}, and \( C_T \) is a positive constant depending only on \( T \).
\end{theorem}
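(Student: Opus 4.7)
My plan is to follow the program outlined in the introduction: reduce the fully nonlinear three-dimensional HJB variational inequality \eqref{V_pb} to a tractable linear problem via duality, solve the reduced problem, and then invert the transformation to recover $V$ together with the claimed regularity, monotonicity, and asymptotics.

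First I would establish the two-sided bound \eqref{V} directly from the definition \eqref{value}. The upper bound follows from \eqref{U2}: for any admissible strategy, standard linear SDE estimates for $X_s$ give $\E[X_s^{1-\gamma}]\le C_T(x^{1-\gamma}+1)$, and the same power estimate accommodates $U_T$. For the lower bound, I exhibit one explicit admissible pair $(\Pi^t,C^t)$: consume at the floor rate $C_s=bh$ and invest the remainder in the risk-free asset. This keeps $H_s=h$ and keeps wealth nonnegative precisely because $x>\tfrac{bh}{r}(1-e^{-r(T-t)})$; the residual wealth at time $T$ equals $[x-\tfrac{bh}{r}(1-e^{-r(T-t)})]e^{r(T-t)}$, and feeding this into \eqref{U3} yields the stated power lower bound. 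This same admissible strategy will also supply the blow-up \eqref{V2}, since the excess wealth above the floor tends to zero there and the $(1-\theta)$-power with $\theta>1$ forces $V_x\to+\infty$; the decay \eqref{V3} will fall out of the upper bound and concavity.

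Next is the main analytic work, carried out in Sections 4--6 of the paper. Using the strict concavity of $V$ in $x$ (which I would first obtain by a convex-combination argument on controls, together with strict concavity of $U$), I apply a Legendre transform in $x$. The supremum over $\pi$ becomes an explicit quadratic and the supremum over $c\in[bh,h]$ becomes a piecewise-defined function, so the HJB equation \eqref{V_pb} is converted into a \emph{linear} constant-coefficient parabolic equation in the dual variable, coupled with a gradient constraint coming from $-V_h\le 0$. Differentiating this dual equation once more in the dual variable converts the gradient constraint into a pointwise function constraint, producing a linear obstacle problem. Existence and $W^{2,1}_p$ regularity of a strong solution to this obstacle problem are obtained by penalty approximation: smooth the obstacle by $\beta_\e$, solve the uniformly parabolic approximation by classical theory, derive uniform $L^\infty$ and interior $W^{2,1}_p$ estimates, and pass to the limit. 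Reconstructing the singular control value by integration in the dual variable and then inverting the Legendre transform delivers the candidate $V\in C^{2,1,1}(\RR_T)$, and the verification theorem in Section 7 confirms it solves \eqref{V_pb}.

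The main obstacle, flagged explicitly by the authors, is the strict concavity $V_{xx}<0$ in \eqref{V1} and the boundary asymptotics \eqref{V2}--\eqref{V3}. Strict concavity of $V$ is equivalent to strict monotonicity of the derivative of the dual value, which in turn demands that the solution of the obstacle problem have no flat plateaus in its relevant derivative. I would obtain this via a strong maximum principle applied on each connected component of the non-contact set of the obstacle problem, combined with regularity of the free boundary and a careful matching argument across the contact set. For the asymptotics, the non-standard lower boundary $x=\tfrac{bh}{r}(1-e^{-r(T-t)})$ --- forced by the drawdown constraint rather than the usual $x=0$ --- means the dual variable does not range over $(0,\infty)$ in the customary way; the blow-up \eqref{V2} and decay \eqref{V3} must therefore be read off from the asymptotic behavior of the dual solution at the two ends of the dual interval, reconciled with the bounds in \eqref{V} already established. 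This compatibility analysis between the dual asymptotics and the drawdown floor is, in my view, the delicate step that drives the entire argument.
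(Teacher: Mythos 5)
Your overall architecture matches the paper's: dual (Legendre) transform in $x$, reduction to a linear obstacle problem by differentiating the dual equation, penalty approximation and $W^{2,1}_p$ estimates for existence, then invert the transform. Your identification of $V_{xx}<0$ with strict monotonicity of the dual derivative, and the remark that a strong-maximum-principle argument on the non-contact set is needed, also tracks the paper (Lemma~\ref{lem:unzz} and Proposition~\ref{pro:vyy}, which prove $u_{zz}-u_z>0$). For \eqref{V2}--\eqref{V3}, your concavity-plus-bounds deduction is correct in principle, and your observation that the drawdown floor shifts the dual endpoints is exactly what Proposition~\ref{lem:vy} makes precise; however, you cannot fully dispense with that proposition, since the range of $v_y$ is needed to define the inverse transform $I(x,t,h)=(v_y(\cdot,\tau,h))^{-1}(-x)$ in the first place.

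The genuine deviation, and the place where your plan has a gap, is the proof of the two-sided bound \eqref{V}. You propose to derive it probabilistically ``directly from the definition \eqref{value},'' via SDE moment estimates for the upper bound and an explicit admissible strategy for the lower bound. But Theorem~\ref{thm:V} asserts these bounds for a \emph{solution of the PDE} \eqref{V_pb}, not for the value function $\widetilde V$ in \eqref{value}; the two are identified only by the verification theorem, which the paper proves under a boundedness hypothesis that \eqref{V} itself does not guarantee (the upper bound grows like $x^{1-\gamma}$ and the lower bound blows down near the floor). So the logical order of your argument is circular as written: you would be using verification to transfer probabilistic estimates onto $V$, while verification is the very step that needs control of $V$'s growth. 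The paper avoids this entirely by proving \eqref{V} \emph{inside the dual picture}: it bounds $v$ above by the solution $\uu$ of the unconstrained linear problem \eqref{uu_pb} (comparison, growth condition \eqref{U2}) and below by the explicit supersolution $\uv$ of \eqref{uv_pb} (using \eqref{U3}), and then reads both bounds off $V(x,t,h)=\inf_{y>0}(v(y,\tau,h)+yx)$. If you want to keep your probabilistic route, you must first prove a verification theorem valid under power-type growth (not just boundedness), which is a nontrivial extra step; otherwise, you should move the bound argument into the dual space as the paper does.
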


The proof can be found in Section~\ref{sec:V}.

To analyze the structure of the variational inequality in \eqref{V_pb}, we partition the domain \( \RR_T \) into the following two regions:

\begin{itemize}
\item {\bf Switching region:}
\[
\SS = \{ (x, t, h) \in \RR_T \mid V_h = 0 \}. 
\]
This means that the consumption habit is being updated.
\item {\bf Non-switching region:}
\[
\NS = \{ (x, t, h) \in \RR_T \mid V_h < 0 \}. 
\]
This means that the consumption habit remains unchanged.
\end{itemize}

In economic terms, when \( (X_t, H_t, t) \in \SS \), the agent raises the consumption rate \( C_t \) beyond its historical maximum \( H_t \), thereby increasing the habit level. Conversely, when \( (X_t, H_t, t) \in \NS \), the agent maintains a consumption rate below \( H_t \), and the reference level remains constant.

Based on the properties of the value function established in Theorem~\ref{thm:V}, the optimal investment and consumption strategies, denoted \( \pi^*(x, t, h) \) and \( c^*(x, t, h) \) respectively, are given by:
\begin{align}
\pi^*(x,t,h) &= -\frac{\mu - r}{\sigma^2} \frac{V_x}{V_{xx}} > 0, \label{pi*} \\
c^*(x,t,h) &= \label{c*}
\begin{cases}
h, & \text{if } 0 < V_x \leq U'(h), \\
(U')^{-1}(V_x), & \text{if } U'(h) < V_x < U'(b h), \\
b h, & \text{if } V_x \geq U'(b h),
\end{cases}
\end{align}
where \( (U')^{-1}(\cdot) \) denotes the inverse of the marginal utility function \( U' \).

The non-switching region \( \NS \) can be further decomposed into three subregions:
\begin{itemize}
\item {\bf High consumption region:}
\[
\HC = \{ V_x \leq U'(h) \} \cap \NS,
\]
\item {\bf Medium consumption region:}
\[
\MC = \{ U'(b h) < V_x < U'(h) \} \cap \NS,
\]
\item {\bf Low consumption region:}
\[
\LC = \{ V_x \geq U'(b h) \} \cap \NS.
\]
\end{itemize}

These regions are separated by free boundaries, which are characterized in the following result:

\begin{theorem}\label{thm:h}
Assume \( V \) is the solution to Problem \eqref{V_pb} obtained in Theorem~\ref{thm:V}, and \( c^* \) is the corresponding optimal consumption policy given in \eqref{c*}. Then there exist three continuous functions \( x^L(t,h) \), \( x^H(t,h) \), and \( x^*(t,h) \), defined on \( [0, T) \times \mathbb{R}^+ \), satisfying
\[
\frac{b h}{r} (1 - e^{-r(T - t)}) < x^L(t,h) < x^H(t,h) < x^*(t,h) < +\infty,
\]
such that the domain \( \RR_T \) is partitioned into the following four regions:
\begin{align*}
\SS &= \{ (x,t,h) \in \RR_T \mid x \geq x^*(t,h) \}, \\
\NS &= \{ (x,t,h) \in \RR_T \mid \tfrac{b h}{r}(1 - e^{-r(T - t)}) < x < x^*(t,h) \}, \\
\HC &= \{ (x,t,h) \in \RR_T \mid x^H(t,h) \leq x < x^*(t,h) \}, \\
\MC &= \{ (x,t,h) \in \RR_T \mid x^L(t,h) < x < x^H(t,h) \}, \\
\LC &= \{ (x,t,h) \in \RR_T \mid \tfrac{b h}{r}(1 - e^{-r(T - t)}) < x \leq x^L(t,h) \}.
\end{align*}

Moreover, the boundary functions \( x^L(t,h) \), \( x^H(t,h) \), and \( x^*(t,h) \) are strictly increasing in \( h \), and satisfy:
\begin{align}
x^*(t, +\infty) &= +\infty, \label{x*H} \\
x^L(T^-, h) &= \begin{cases}
(U_T')^{-1}(U'(b h)), & \text{if } U'(b h) < U_T'(0+), \\
0, & \text{otherwise},
\end{cases} \label{xLT} \\
x^H(T^-, h) &= \begin{cases}
(U_T')^{-1}(U'(h)), & \text{if } U'(h) < U_T'(0+), \\
0, & \text{otherwise},
\end{cases} \label{xHT} \\
x^*(T^-, h) &= x^H(T^-, h). \label{x*T}
\end{align}
Here, we use the notation \( f(a\pm) = \lim_{x \to a^\pm} f(x) \) and \( f(\pm\infty) = \lim_{x \to \pm\infty} f(x) \).

(See Figure~\ref{fig}.)
\end{theorem}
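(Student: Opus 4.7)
My plan is to define $x^L$ and $x^H$ as level sets of $V_x$, and $x^*$ as the boundary of $\{V_h=0\}$, then derive their properties from Theorem~\ref{thm:V}. For fixed $(t,h)$, the map $x\mapsto V_x(x,t,h)$ is continuous and, by \eqref{V1}, strictly decreasing; combined with \eqref{V2}--\eqref{V3} it is a bijection from $\bigl(\tfrac{bh}{r}(1-e^{-r(T-t)}),+\infty\bigr)$ onto $(0,+\infty)$. Hence there exist unique $x^L(t,h)$ and $x^H(t,h)$ solving $V_x(x^L,t,h)=U'(bh)$ and $V_x(x^H,t,h)=U'(h)$; since $U''<0$ and $bh\le h$ give $U'(h)\le U'(bh)$, the ordering $x^L<x^H$ is immediate, and the identifications of $\LC,\MC,\HC$ follow directly from their definitions. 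Continuity of $x^L,x^H$ on $[0,T)\times\R^+$ follows from the $C^{2,1,1}$ regularity of $V$ and the implicit function theorem (using $V_{xx}<0$).

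To get monotonicity of $x^L,x^H$ in $h$, I would establish $V_{xh}\ge 0$ by differentiating the variational inequality in $h$ and applying a comparison principle, the intuition being that raising $h$ tightens the drawdown floor $bh$ and therefore raises the marginal value of wealth. Implicit differentiation of $V_x(x^H(t,h),t,h)=U'(h)$ then gives $\partial_h x^H=-(V_{xh}+U''(h))/V_{xx}>0$, and analogously for $x^L$.

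The boundary $x^*(t,h)$ separating $\NS$ from $\SS$ is more delicate, since it is not a level set of a smooth function. The plan is to extract it from the free boundary of the linear obstacle problem constructed in Sections~4 and~5: under the dual transformation, $\SS$ corresponds to the coincidence set of an obstacle problem, whose free boundary is continuous and monotone. Transporting this back through the inverse dual transform gives a continuous $x^*(t,h)$ and, for each $(t,h)$, the section $\{x:V_h(x,t,h)<0\}$ is a left-open interval $\bigl(\tfrac{bh}{r}(1-e^{-r(T-t)}),x^*(t,h)\bigr)$; finiteness of $x^*$ for finite $h$ will follow by comparison with a CRRA-type upper barrier, and $x^*(t,+\infty)=+\infty$ in \eqref{x*H} from the same comparison as $h\to\infty$.

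The terminal limits \eqref{xLT}--\eqref{x*T} I would obtain by passing to $t=T$ with $V(x,T,h)=U_T(x)$, so $V_x(x,T,h)=U_T'(x)$. Inverting the level-set equations yields \eqref{xLT}--\eqref{xHT}, with the degenerate case $x^{L/H}(T-,h)=0$ arising precisely when the prescribed marginal value exceeds $U_T'(0+)$. For $x^*(T-,h)$, since $U_T$ is $h$-independent, $V_h(x,T-,h)$ vanishes for $x\ge x^H(T-,h)$, giving \eqref{x*T}. The main obstacle will be the analysis of $x^*$, namely showing the switching region is exactly a half-line in $x$ and that its boundary coincides with $x^H$ at $t=T$; unlike $x^L,x^H$, which reduce cleanly to the implicit function theorem on smooth data, $x^*$ requires the full obstacle-problem machinery of Sections~4--6 and a careful terminal-layer analysis.
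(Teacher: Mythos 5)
Your overall architecture matches the paper's: the paper defines all three boundaries through the dual, setting
\[
x^*(t,h)=-v_y\bigl(e^{z^*(\tau,h)},\tau,h\bigr),\qquad
x^H(t,h)=-v_y\bigl(U'(h),\tau,h\bigr),\qquad
x^L(t,h)=-v_y\bigl(U'(bh),\tau,h\bigr),
\]
which is exactly the inverse of your primal level-set formulation since $V_x(\cdot,t,h)$ and $-v_y(\cdot,\tau,h)$ are mutual inverses. Continuity, ordering, the region decomposition, and the terminal limits are then read off from the regularity of $v$, $v_{yy}>0$, $v_y<-\tfrac{bh}{r}(1-e^{-r\tau})$, the continuity and strict $\tau$-, $h$-monotonicity of $z^*$, and $v_y(\cdot,0,h)=\wwU_T'$, all of which you invoke. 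So this is essentially the paper's proof, with one avoidable detour and one sign slip worth flagging.

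The detour: you propose establishing $V_{xh}\ge 0$ by differentiating the \emph{primal} variational inequality in $h$ and invoking a comparison principle. That route is awkward: the primal HJB is fully nonlinear, has a $\min$ operator with a free boundary, and the $\sup_{bh\le c\le h}$ term depends on $h$ through the constraint set, so a clean differentiated PDE is not immediate. There is no need for it. Since $v_h=-w$, one has $v_{yh}=-e^{-z}w_z\le 0$, and $w_z\ge 0$ is precisely \eqref{w2}, already obtained from the penalized obstacle problem that you are in any case using to construct $z^*$. Thus $V_{xh}\ge 0$ is a one-line corollary of the dual-side analysis; the paper takes this route.

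The sign slip: implicit differentiation of $V_x(x^H(t,h),t,h)=U'(h)$ gives $V_{xx}\,\partial_h x^H + V_{xh}=U''(h)$, hence $\partial_h x^H=\bigl(U''(h)-V_{xh}\bigr)/V_{xx}>0$ since $U''<0\le V_{xh}$ and $V_{xx}<0$. Your formula $-(V_{xh}+U''(h))/V_{xx}$ has the wrong sign on $U''$, and with $V_{xh}\ge0$ and $U''<0$ its numerator has indeterminate sign, so as written it does not yield the claimed monotonicity. The corrected formula does, and matches the paper's observation that $v_{yh}\le 0$ together with $v_{yy}>0$, $U''<0$ forces strict increase in $h$. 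Everything else in your plan—the coincidence-set characterization of $\SS$, $x^H<x^*$ from $z^*(\tau,h)<\ln U'(h)$ for $\tau>0$, and the terminal limits from $v(\cdot,0,h)=\wwU_T$—is in line with the paper.
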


\begin{figure}[H]
\begin{center}
\begin{picture}(200,100)(0,0)
\put(0,10){\vector(1,0){200}} \put(20,-15){\vector(0,1){135}}
\put(0,100){\line(1,0){200}}
\put(10,110){$t$}
\put(190,0){\footnotesize $x$}
\put(90,105){\footnotesize $x^H(T^-,h)=x^*(T^-,h)$}
\put(40,105){\footnotesize $x^L(T^-,h)$}
\put(170,70){\footnotesize $\SS$}
\put(120,70){\footnotesize $\HC$}
\put(70,70){\footnotesize $\MC$}
\put(30,70){\footnotesize $\LC$}
\put(130, 97){\footnotesize $\bullet$}
\put(55, 97){\footnotesize $\bullet$}
\qbezier(140,10)(165,78)(132,100)\put(133,45){\footnotesize $x^*(t,h)$}
\qbezier(100,10)(100,78)(132,100)\put(83,45){\footnotesize $x^H(t,h)$}
\qbezier(60,10)(50,70)(58,100)\put(33,45){\footnotesize $x^L(t,h)$}
\end{picture}
\caption{The locations of the four regions and the three free boundaries for fixed \( h > 0 \).}
\label{fig}
\end{center}
\end{figure}
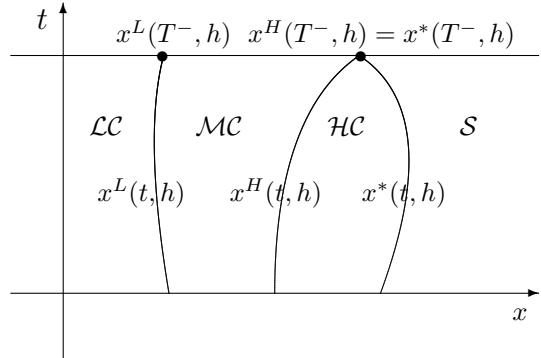

The verification theorem is stated as follows. To circumvent the technical complications arising from verifying convergence under the localization procedure for the Itô integral, we restrict our analysis to the class of bounded value functions—an assumption justified by the boundedness of the utility functions.

\begin{theorem}[Verification Theorem]
\label{thm:averi}
Let \( V \) be the solution to the Hamilton-Jacobi-Bellman equation \eqref{V_pb}, as characterized in Theorem~\ref{thm:V}. If \( V \) is bounded on \( \RR_T \), then it coincides with the value function of the optimal consumption-investment problem \eqref{value}.

Furthermore, let \( x^*(t,h) \) denote the optimal switching boundary defined in Theorem~\ref{thm:h}, and consider the controlled process \( \{(X^*_s, H^*_s, \Pi^*_s, C^*_s)\}_{t \leq s \leq T} \) governed by the following system:
\[
\left\{
\begin{array}{ll}
\mathrm{d} X^*_s = \big[r (X^*_s - \Pi^*_s) + \mu \Pi^*_s - C^*_s\big]\, \mathrm{d}s + \sigma \Pi^*_s\, \mathrm{d}W_s, \quad t \leq s \leq T, \\[1mm]
H^*_s = (x^*)^{-1}(s,\cdot)\left( \max\left\{ x^*(s,h),\; \max_{t \leq \theta \leq s} X^*_\theta \right\} \right), \\[1mm]
\Pi^*_s = \pi^*(X^*_s, s, H^*_s), \\[1mm]
C^*_s = c^*(X^*_s, s, H^*_s),
\end{array}
\right.
\]
where \( (x^*)^{-1}(s,\cdot) \) denotes the inverse function of \( x^*(s,\cdot) \), and the feedback control functions \( \pi^* \) and \( c^* \) are defined in \eqref{pi*} and \eqref{c*}, respectively. Then the strategy \( \{(\Pi^*_s, C^*_s)\}_{t \leq s \leq T} \) is optimal for Problem~\eqref{value}.
\end{theorem}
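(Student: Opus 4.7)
The plan is a standard verification argument via It\^o's formula, adapted to the singular control setting in which the reference process $H$ is the running maximum of consumption. The proof naturally splits into two inequalities: $V$ dominates the payoff of every admissible strategy, and the feedback strategy \eqref{pi*}--\eqref{c*} achieves equality.

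\emph{Upper bound.} For an arbitrary admissible $(\Pi^t, C^t)$ with wealth $X_s$ and running maximum $H_s$, I would apply It\^o's formula for semimartingales with a continuous non-decreasing singular component to $Y_s := e^{-\rho(s-t)} V(X_s, s, H_s)$, using the $C^{2,1,1}$ regularity of $V$ from Theorem~\ref{thm:V}. Since $dH_s$ is supported on $\{C_s = H_s\}$, the decomposition of $dY_s$ consists of a Lebesgue drift built from the HJB operator, a singular term $e^{-\rho(s-t)} V_h\, dH_s$, and a stochastic integral $e^{-\rho(s-t)} \sigma \Pi_s V_x\, dW_s$. The variational inequality \eqref{V_pb} forces the Lebesgue drift to be bounded above by $-e^{-\rho(s-t)} U(C_s)$ for every admissible control, and $-V_h \geq 0$ combined with $dH_s \geq 0$ gives $V_h\, dH_s \leq 0$. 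Integrating from $t$ to $T \wedge \tau_n$ for a localizing sequence $\tau_n$, taking expectations, and exploiting the assumed boundedness of $V$ to pass to the limit via dominated convergence as $n \to \infty$, I would obtain
\[
V(x,t,h) \geq \mathbb{E}\left[\int_t^T e^{-\rho(s-t)} U(C_s)\, ds + e^{-\rho(T-t)} U_T(X_T)\right],
\]
so that $V$ dominates the supremum in \eqref{value}.

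\emph{Equality along the feedback.} For the candidate $(\Pi^*, C^*)$, I would verify that each of the above inequalities becomes an equality. The pointwise suprema in the HJB operator are attained at $\pi^*$ (well defined by \eqref{V1}) and at $c^*$ (by the piecewise rule \eqref{c*} together with the strict concavity of $U$). The reflection prescription
\[
H^*_s = (x^*)^{-1}(s,\cdot)\Big(\max\{x^*(s,h),\, \max_{t \leq \theta \leq s} X^*_\theta\}\Big)
\]
relies on the continuity and strict $h$-monotonicity of $x^*$ from Theorem~\ref{thm:h} to keep $(X^*_s, s, H^*_s)$ in the closure of $\NS$; equivalently, $H^*$ increases only when $X^*_s = x^*(s, H^*_s)$, a set on which $V_h = 0$ by continuity from $\SS$. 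Consequently $V_h\, dH^*_s \equiv 0$, and in the interior of $\NS$ the Lebesgue drift equals exactly $-e^{-\rho(s-t)} U(C^*_s)$ since the HJB operator vanishes there. The same It\^o calculation then produces equality after taking expectations, yielding both that $V$ equals the value function in \eqref{value} and that $(\Pi^*, C^*)$ is optimal.

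\emph{Main obstacle.} The most delicate step is the rigorous construction and admissibility of the reflected controlled process $(X^*, H^*)$. Well-posedness of the Skorokhod-type reflection at the time- and $h$-dependent free boundary $x = x^*(s, H^*_s)$ requires the continuity and strict monotonicity of $x^*$ in $h$ furnished by Theorem~\ref{thm:h}, so that $(x^*)^{-1}(s,\cdot)$ is a well-defined continuous inverse. Admissibility --- in particular $\mathbb{E}[\int_t^T (C^*_s + (\Pi^*_s)^2)\, ds] < \infty$ --- is nontrivial because $V_x \to \infty$ as $x$ approaches the lower boundary $\tfrac{bh}{r}(1 - e^{-r(T-s)})$, so the Merton feedback $\pi^* = -\tfrac{\mu-r}{\sigma^2}\tfrac{V_x}{V_{xx}}$ could potentially blow up there. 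I expect this to be controlled via a priori boundary estimates on $V_x$ and $V_{xx}$ obtained from the strong solution of the obstacle problem established in Section~\ref{sec:V}, combined with standard hitting-time arguments showing that the controlled wealth does not accumulate at the lower boundary.
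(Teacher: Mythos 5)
Your proposal follows essentially the same route as the paper's proof: It\^o's formula with a localizing sequence, the variational inequality to dominate the Lebesgue drift by $-e^{-\rho(s-t)}U(C_s)$, the sign argument $V_h\,\mathrm{d}H_s \le 0$, dominated convergence via the assumed boundedness of $V$, and equality along the feedback strategy using $V_h\,\mathrm{d}H^*_s \equiv 0$ since $H^*$ increases only on the set $\{X^*_s = x^*(s,H^*_s)\} \subset \{V_h = 0\}$. The ``main obstacle'' you flag --- rigorous well-posedness of the reflected state process and admissibility of $(\Pi^*, C^*)$, in particular integrability of $\Pi^*$ near the lower wealth boundary --- is a genuine gap, but one the paper's own proof also leaves unaddressed (the paper simply posits the controlled system and proceeds to It\^o), so on this point you are more careful than the source rather than in disagreement with it.
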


\begin{proof}
Let \( \widetilde{V}(x,t,h) \) denote the value function defined in \eqref{value}, and let \( V(x,t,h) \) be the solution to the PDE \eqref{V_pb}. We aim to prove that \( V(x,t,h) = \widetilde{V}(x,t,h) \).

Fix any \( (x,t,h) \in \mathbb{R}^+ \times [0,T) \times \mathbb{R}^+ \), and let \( \{(\Pi_s, C_s)\}_{t \leq s \leq T} \) be any admissible strategy. Define
\[
H_s = \max\left\{ h,\; \max_{t \leq \theta \leq s} C_\theta \right\}, \quad s \geq t,
\]
and let \( \{X_s\}_{s \geq t} \) be the solution to the wealth equation \eqref{Xt}.
For each \( n \in \mathbb{N} \), define the stopping time
\[
\tau_n := \inf \left\{ s \in [t, T] \;\middle|\; X_s + |\Pi_s| + |H_s| > n \text{ or } X_s < \frac{b h}{r} (1 - e^{-r(T - t)}) + \frac{1}{n} \right\}.
\]
Applying It\^o formula to \( V(X_s, s, H_s) \) on \( [t, \tau_n \wedge T] \) yields:
\begin{align*}
& V(x,t,h) =\mathbb{E} \left[ e^{-\rho(\tau_n \wedge T - t)} V(X_{\tau_n \wedge T}, \tau_n \wedge T, H_{\tau_n \wedge T}) \right] \\
& - \mathbb{E} \left[ \int_t^{\tau_n \wedge T} e^{-\rho(s - t)} \left( -V_t - \frac{1}{2} \sigma^2 \Pi_s^2 V_{xx} - (\mu - r) \Pi_s V_x + V_x C_s - r X_s V_x + \rho V \right)(X_s,s,H_s)\, \mathrm{d}s \right] \\
& - \mathbb{E} \left[ \int_t^{\tau_n \wedge T} e^{-\rho(s - t)} V_h(X_s,s,H_s)\, \mathrm{d}H_s \right] \\
& - \mathbb{E} \left[ \int_t^{\tau_n \wedge T} e^{-\rho(s - t)} \sigma \Pi_s V_x(X_s,s,H_s)\, \mathrm{d}W_s \right].
\end{align*}

The last term vanishes in expectation due to the integrability and boundedness of the integrand. From the variational inequality satisfied by \( V \), we have:
\begin{align*}
&\left( -V_t - \frac{1}{2} \sigma^2 \Pi_s^2 V_{xx} - (\mu - r) \Pi_s V_x + V_x C_s - r X_s V_x + \rho V \right)(X_s,s,H_s) \\
&\quad \geq \sup_{\pi, c} \left\{ -V_t - \frac{1}{2} \sigma^2 \pi^2 V_{xx} - (\mu - r) \pi V_x - r X_s V_x + \rho V - (U(c) - c V_x) \right\} + U(C_s) \\
&\quad \geq U(C_s),
\end{align*}
and since \( \mathrm{d}H_s \geq 0 \), we have \( -V_h(X_s,s,H_s) \mathrm{d}H_s \leq 0 \).
Hence,
\[
V(x,t,h) \geq \mathbb{E} \left[ \int_t^{\tau_n \wedge T} e^{-\rho(s - t)} U(C_s)\, \mathrm{d}s \right] + \mathbb{E} \left[ e^{-\rho(\tau_n \wedge T - t)} V(X_{\tau_n \wedge T}, \tau_n \wedge T, H_{\tau_n \wedge T}) \right].
\]
Since \( V \) is bounded, the dominated convergence theorem implies
\[
V(x,t,h) \geq \mathbb{E} \left[ \int_t^T e^{-\rho(s - t)} U(C_s)\, \mathrm{d}s + e^{-\rho(T - t)} U_T(X_T) \right].
\]
Because \( (\Pi_s, C_s) \) is arbitrary, we conclude \( V(x,t,h) \geq \widetilde{V}(x,t,h) \).

To establish the reverse inequality, consider the feedback strategy \( (\Pi^*_s, C^*_s) \) defined in the theorem. By construction,
\[
X^*_s \leq x^*(s, H^*_s), \quad \forall s \in [t, T],
\]
and the HJB equation holds with equality:
\[
\left( -V_t - \frac{1}{2} \sigma^2 (\Pi^*_s)^2 V_{xx} - (\mu - r) \Pi^*_s V_x + V_x C^*_s - r X^*_s V_x + \rho V \right)(X^*_s, s, H^*_s) = U(C^*_s).
\]
Moreover, since
\[
\mathrm{d}H^*_s = 0 \quad \text{if } X^*_s < x^*(s, H^*_s), \qquad V_h(X^*_s, s, H^*_s) = 0 \quad \text{if } X^*_s = x^*(s, H^*_s),
\]
it follows that
\[
-V_h(X^*_s, s, H^*_s)\, \mathrm{d}H^*_s = 0, \quad \forall s \in [t, T].
\]
Substituting into Itô’s formula and passing to the limit yields
\[
V(x,t,h) = \mathbb{E} \left[ \int_t^T e^{-\rho(s - t)} U(C^*_s)\, \mathrm{d}s + e^{-\rho(T - t)} U_T(X^*_T) \right] = \widetilde{V}(x,t,h).
\]
Thus, \( V = \widetilde{V} \), and the feedback strategy \( (\Pi^*_s, C^*_s) \) is indeed optimal.
\end{proof}

\section{Simplification of the Equation}\label{sec:S}
\setcounter{equation}{0}

To analyze the fully nonlinear three-dimensional variational inequality \eqref{V_pb}, we apply a dual transformation that reduces the problem to a linear obstacle problem \eqref{w_pb}. In this reformulation, the variable $h$ is treated as a parameter, thereby reducing \eqref{w_pb} to a two-dimensional problem. As a result, classical PDE methods can be employed to study both the solution and the associated free boundary.

Throughout this procedure, various a priori estimates on the solution will be used. We will rigorously establish the existence of solutions to Problem \eqref{w_pb}, and in Section~\ref{sec:V}, we construct a solution to Problem \eqref{V_pb} based on the solution to \eqref{w_pb}, while verifying the necessary a priori conditions.

Given the optimal feedback functions in \eqref{pi*} and \eqref{c*}, the variational inequality \eqref{V_pb} can be reformulated as
\begin{align}\label{V_pb1}
\left\{
\begin{array}{ll}
\displaystyle{\min\Bigg\{-V_t + \frac{1}{2} \kappa^2 \frac{V_x^2}{V_{xx}} - \wU(V_x,h) - r x V_x + \rho V,\; -V_h\Bigg\} = 0}, \quad (x,t,h) \in \RR_T, \\[2mm]
V(x,h,T) = U_T(x), \quad x > 0,\; h > 0,
\end{array}
\right.
\end{align}
where \( \kappa = (\mu - r)/\sigma > 0 \), and the function \( \wU(y,h) \) is defined by
\[
\wU(y,h) := \max_{b h \leq c \leq h} \big( U(c) - c y \big) =
\begin{cases}
U(h) - h y, & 0 < y \leq U'(h), \\
U((U')^{-1}(y)) - (U')^{-1}(y) y, & U'(h) < y < U'(b h), \\
U(b h) - b h y, & y \geq U'(b h).
\end{cases}
\]

Assuming the following priori estimates:
\[
\lim_{x \to \frac{b h}{r} (1 - e^{-r(T - t)})^+} V_x(x,t,h) = +\infty, \quad \lim_{x \to +\infty} V_x(x,t,h) = 0, \quad t \in [0,T),\; h > 0,
\]
we define the dual transform of \( V(x,t,h) \) as
\[
v(y,\tau,h) := \sup_{x > \frac{b h}{r}(1 - e^{-r(T - t)})} \left( V(x,t,h) - x y \right), \quad y > 0,\; h > 0,\; \tau = T - t \in (0,T].
\]

For fixed \( x, t, h \), let \( y := V_x(x,t,h) \). Then it follows that
\[
V(x,t,h) = v(y,\tau,h) + x y.
\]
By the convexity of \( v(y,\tau,h) \), we obtain the relations:
\[
x = -v_y(y,\tau,h), \quad
V(x,t,h) = v(y,\tau,h) - y v_y(y,\tau,h),
\]
\[
V_{xx}(x,t,h) = -\frac{1}{v_{yy}(y,\tau,h)}, \quad
V_t(x,t,h) = -v_\tau(y,\tau,h), \quad
V_h(x,t,h) = v_h(y,\tau,h).
\]
Substituting into \eqref{V_pb1}, we obtain the transformed variational inequality:
\begin{align}\label{v_pb}
\left\{
\begin{array}{ll}
\min \left\{ v_\tau - \LL v - \wU(y,h),\; -v_h \right\} = 0, \quad (y, \tau, h) \in \mathbb{R}^+ \times (0,T] \times \mathbb{R}^+, \\[2mm]
v(y, h, 0) = \wwU_T(y), \quad y > 0,\; h > 0,
\end{array}
\right.
\end{align}
where the operator \( \LL \) is given by
\[
\LL v := \frac{1}{2} \kappa^2 y^2 v_{yy} + (\rho - r) y v_y - \rho v,
\]
and the function \( \wwU_T(y) \) denotes the Legendre transform of the terminal utility \( U_T \), i.e.,
\begin{align}\label{wUT}
\wwU_T(y) := \sup_{x > 0} \big( U_T(x) - x y \big) =
\begin{cases}
U_T((U_T')^{-1}(y)) - (U_T')^{-1}(y) y, & 0 < y < U_T'(0+), \\
U_T(0), & y \geq U_T'(0+).
\end{cases}
\end{align}

To facilitate further analysis, we introduce the logarithmic transformation
\[
z = \ln y, \quad u(z, \tau, h) = v(y, \tau, h),
\]
under which the operator \( \LL \) becomes
\[
\TT u := \frac{1}{2} \kappa^2 u_{zz} + \left( \rho - r - \frac{1}{2} \kappa^2 \right) u_z - \rho u.
\]
Thus, the variational inequality for \( u(z, \tau, h) \) becomes:
\begin{align}\label{u_pb}
\left\{
\begin{array}{ll}
\min \left\{ u_\tau - \TT u - \wU(e^z, h),\; -u_h \right\} = 0, \quad (z, \tau, h) \in \mathbb{R} \times (0,T] \times \mathbb{R}^+, \\[2mm]
u(z, h, 0) = \wwU_T(e^z), \quad z \in \mathbb{R},\; h \in \mathbb{R}^+.
\end{array}
\right.
\end{align}

Define the function
\[
f(z, h) := \partial_h \wU(e^z, h) =
\begin{cases}
U'(h) - e^z, & \text{if } e^z \leq U'(h), \\
0, & \text{if } U'(h) < e^z < U'(b h), \\
b U'(b h) - b e^z, & \text{if } e^z \geq U'(b h).
\end{cases}
\]

In the region where \( u_\tau - \TT u - \wU(e^z, h) = 0 \), the function \( w := -u_h \) satisfies:
\[
w_\tau - \TT w + f(z, h) = -\partial_h(u_\tau - \TT u - \wU(e^z, h)) = 0.
\]
This motivates the study of the following obstacle problem for \( w \):
\begin{align}\label{w_pb}
\left\{
\begin{array}{ll}
\min \left\{ w_\tau - \TT w + f(z,h),\; w \right\} = 0, \quad (z, \tau) \in \mathbb{R} \times (0,T],\; h \in \mathbb{R}^+, \\[2mm]
w(z, h, 0) = 0, \quad z \in \mathbb{R},\; h \in \mathbb{R}^+.
\end{array}
\right.
\end{align}

We begin our analysis by examining Problem~\eqref{w_pb} to understand the structure of the solution and its associated free boundary, which in turn provides the foundation for constructing the solution to the original problem \eqref{V_pb}.

\section{The solution and the free boundary to the obstacle problem \eqref{w_pb}}\label{sec:E}
\setcounter{equation}{0}

In this section, we will employ the penalty approximation method to establish the existence and uniqueness of the solution to the obstacle problem \eqref{w_pb} and derive certain estimates of the solution. Subsequently, we can prove the existence and uniqueness of the free boundary and elucidate its properties, such as monotonicity, smoothness, and the position of its starting point.

\subsection{Existence and uniqueness of solution to Problem \eqref{w_pb}}

The problem \eqref{w_pb} is essentially a two - dimensional problem as the variable $h$ can be treated as a parameter. Therefore, in this subsection, we will fix a positive value for $h$.
To address Problem \eqref{w_pb}, we initially formulate the following penalty approximation problem for it within a bounded domain.
\begin{align}\label{we_pb}
\left\{
\begin{array}{ll}
w^\ep_\tau-\TT w^\ep + f(z,h) +\beta_\ep(w^\ep)=0,\quad(y,\tau)\in \Q_T^\ep,\\
w^\ep(-\frac{1}{\ep},\tau,h) =0,  \quad  \tau\in (0,T],\\
w^\ep_x(\frac{1}{\ep},\tau,h) = 0,  \quad   \tau\in (0,T],\\
w^\ep(z,0,h) =0, \quad   z\in (-\frac{1}{\ep},\frac{1}{\ep}),
\end{array}
\right.
\end{align}
where, the domain $\Q_T^\ep$ is defined as
$$\Q_T^\ep:=(-\frac{1}{\ep},\frac{1}{\ep})\times(0,T],$$
and $\beta _{\ep }(\cdot)$ is any penalty function satisfying the following properties
\begin{eqnarray*}
&&\beta _{\ep }(\cdot)\in C^{2}(-\infty,+\infty ), \quad\beta _{\ep}(0)=-U'(h)<0, \quad\beta _{\ep}(x)=0\; \text{ for } x\geq \ep, \\
&& \beta _{\ep
}(\cdot)\leq 0, \quad \beta _{\ep }^{\prime }(\cdot)\geq 0,\quad \beta _{\ep
}^{\prime \prime }(\cdot)\leq 0, \quad\lim\limits_{\ep \rightarrow 0} \beta _{\ep }(x)=\left\{
\begin{array}{ll}
0, & \mbox{if } x>0,\vspace{2mm} \\
-\infty, & \mbox{if } x<0,
\end{array}
\right.
\end{eqnarray*}%
\figref{fig1} demonstrates such a penalty function $\beta _{\ep }(\cdot)$.
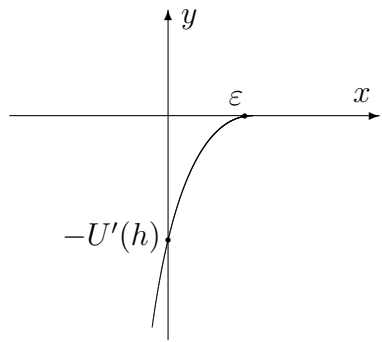
\begin{figure}[H]
\begin{center}
\begin{picture}(250,140)
\put(50,80){\vector(1,0){140}}
\put(110,-5){\vector(0,1){125}}
\qbezier(104,0)(115,80)(142,80)
\put(180,85){$x$}
\put(115,115){$y$}
\put(133,84){$\ep$}
\put(70,31){$-U'(h)$}
\put(110,33){\circle*{2}}
\put(139,80){\circle*{2}}
\end{picture} \caption{A penalty function $\beta_\ep (\cdot)$.} \label{fig1}
\end{center}
\end{figure}

Subsequently, we are able to derive the following existence and uniqueness results for the approximation problem \eqref{we_pb}.
\begin{lemma}\label{thm:we}
For any $p>3$ and $0<\al<1$, the problem \eqref{we_pb} admits a unique solution $w^\ep\in W^{2,1}_p(\Q_T^\ep)\cap C^{1+\al,\frac{1+\al}{2}}(\overline{\Q_T^\ep})$. Furthermore, the solution satisfies the following estimations in $\Q_T^\ep$:
\begin{align}\label{we1}
0\leq &\;w^\ep\leq \frac{1}{r} e^z+\ep,\\
\label{we2}
&\; w^\ep_z \geq 0,\\
\label{we3}
&\; w^\ep_\tau \geq 0.
\end{align}
\end{lemma}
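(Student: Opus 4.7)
My plan is to exploit that, for each fixed \(\ep>0\), the penalty \(\beta_\ep\) is smooth, non-decreasing and uniformly bounded on any range in which \(w^\ep\) will eventually be seen to live. This places Problem \eqref{we_pb} squarely in classical parabolic theory on a bounded cylinder, and the four assertions then follow from the maximum principle together with well-chosen sub- and supersolutions.

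For existence I would set up a Schauder fixed-point argument on a closed convex subset of \(C^{\al,\al/2}(\overline{\Q_T^\ep})\): given \(\hat w\), solve the linear problem
\[
w_\tau-\TT w=-f(z,h)-\beta_\ep(\hat w)
\]
with the initial, Dirichlet and Neumann data of \eqref{we_pb}. Because the right-hand side is bounded (\(f\) is continuous on the compact interval \([-1/\ep,1/\ep]\) and \(|\beta_\ep|\le U'(h)\)), standard \(L^p\) parabolic estimates give \(w\in W^{2,1}_p\) for every \(p>1\), which for \(p>3\) embeds compactly into \(C^{1+\al,(1+\al)/2}\); this yields the fixed point. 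Uniqueness follows by comparing two candidate solutions: their difference satisfies a linear equation whose zeroth-order coefficient \(\int_0^1\beta_\ep'\,ds\) is non-negative by monotonicity of \(\beta_\ep\), so the weak maximum principle forces equality.

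The four pointwise estimates come from tailored barriers and the maximum principle. The key algebraic facts, which I would verify piece by piece from the definition of \(f\), are \(f(z,h)\le U'(h)\) and \(f(z,h)+e^z\ge 0\) on all of \(\R\). The first makes the constant \(0\) a subsolution (since \(f+\beta_\ep(0)=f-U'(h)\le 0\)), giving \(w^\ep\ge 0\). The second shows that \(\phi(z):=\tfrac{1}{r}e^z+\ep\) is a supersolution: one computes \(\phi_\tau-\TT\phi=-e^z-\rho\ep\) and \(\beta_\ep(\phi)=0\), so \(\phi_\tau-\TT\phi+f+\beta_\ep(\phi)=(f+e^z)+\rho\ep\ge 0\); since \(\phi\) also dominates \(w^\ep\) on the parabolic boundary (the Neumann side being harmless because \(\phi_z(1/\ep)>0=w^\ep_z(1/\ep)\)), comparison yields the upper bound. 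For the monotonicity statements I would differentiate the PDE: \(W:=w^\ep_z\) satisfies
\[
W_\tau-\TT W+\beta_\ep'(w^\ep)W=-f_z\ge 0,
\]
with \(W(\cdot,0)\equiv 0\), \(W(-1/\ep,\tau)\ge 0\) (by the non-negativity of \(w^\ep\) and its vanishing on the left face) and \(W(1/\ep,\tau)=0\); similarly, \(w^\ep_\tau\) satisfies a homogeneous linear equation whose initial value is \(\TT w^\ep(\cdot,0)-f-\beta_\ep(0)=U'(h)-f(z,h)\ge 0\), together with zero Dirichlet data on the left and zero Neumann data on the right. Since the effective zeroth-order coefficient \(\rho+\beta_\ep'(w^\ep)\) is positive, the maximum principle delivers the required sign.

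The one genuine technical wrinkle is that \(f(\cdot,h)\) has corners at \(e^z=U'(h)\) and \(e^z=U'(bh)\), so the \(z\)-differentiation above must be interpreted in a Sobolev/distributional sense. My plan is to first mollify \(f\) into a smooth \(f_\delta\) preserving the sign conditions \(f_\delta\le U'(h)\), \(f_\delta+e^z\ge 0\) and \(\partial_z f_\delta\le 0\), carry out all the comparison arguments for the corresponding classical solution \(w^{\ep,\delta}\), and then pass \(\delta\to 0\) using the \(\delta\)-uniform \(W^{2,1}_p\) bounds and the compact Sobolev embedding. All the inequalities survive the limit because they are preserved by weak convergence.
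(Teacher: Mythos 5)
Your approach tracks the paper's proof almost step for step: the zero function and $\Phi(z)=\tfrac1r e^z+\ep$ as comparison barriers for \eqref{we1}, and differentiation in $z$ (with the Neumann data on the right face and $w^\ep_z(-1/\ep,\tau)\ge0$ from the one-sided constraint) for \eqref{we2}. The required sign facts $f\le U'(h)$, $f+e^z\ge 0$, $\p_z f\le 0$ all check out, and the mollification remark for handling the corners of $f$ is a reasonable way to make the $z$-differentiation rigorous. One small slip: with $\TT u=\tfrac12\kappa^2u_{zz}+(\rho-r-\tfrac12\kappa^2)u_z-\rho u$ one has $\Phi_\tau-\TT\Phi=+e^z+\rho\ep$, not $-e^z-\rho\ep$; your final inequality $\Phi_\tau-\TT\Phi+f+\beta_\ep(\Phi)=f+e^z+\rho\ep\ge 0$ is nonetheless correct.

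The genuine divergence is \eqref{we3}. The paper proves $w^\ep_\tau\ge0$ by a time-shift comparison: set $\widetilde w^\ep(z,\tau)=w^\ep(z,\tau+\Delta\tau)$, note it obeys the same PDE and lateral boundary data while $\widetilde w^\ep(\cdot,0)=w^\ep(\cdot,\Delta\tau)\ge0=w^\ep(\cdot,0)$, and conclude $\widetilde w^\ep\ge w^\ep$ from the comparison principle. You instead differentiate the equation in $\tau$ and compute a nonnegative ``initial'' value $w^\ep_\tau(\cdot,0)=U'(h)-f\ge0$. This runs into a corner incompatibility that the time-shift sidesteps: the Dirichlet condition forces $w^\ep_\tau(-1/\ep,\tau)=0$ for all $\tau>0$, yet your formula gives $w^\ep_\tau(-1/\ep,0)=U'(h)-f(-1/\ep,h)=e^{-1/\ep}>0$, so $w^\ep_\tau$ cannot be continuous up to the corner $(-1/\ep,0)$ and its initial trace is only an $L^p$ statement even after you mollify $f$. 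The argument can still be salvaged with a weak maximum principle for $W^{2,1}_p$ data (the discrepancy lives on a null set), but your write-up does not flag this, and the additional regularity bookkeeping is exactly what the paper's time-shift trick is designed to avoid.
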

\begin{proof}
The existence and uniqueness of the solution can be established by applying the Schauder fixed - point theorem (see \cite{Ev16} Theorem 2 on page 502) and the comparison principle. Since the proof process follows a standard procedure, we omit the details here.

We start by proving the inequality \eqref{we1}. Denote the function $\phi(z,\tau)\equiv0$. Then, we can calculate
\begin{align*}
\phi_\tau-\TT \phi+f(z,h)+\beta_\ep(\phi)= f(z,h) - U'(h)\leq 0.
\end{align*}
According to the comparison principle, we conclude that $w^\ep\geq \phi =0$, which is the left - hand side inequality in \eqref{we1}.

Next, denote the function $\Phi(z,\tau)=e^z/r +\ep$. Notice that $\Phi(z,\tau)\geq \ep$ implies $\beta_\ep(\Phi)=0$. Then, we have
\begin{align*}
\Phi_\tau-\TT \Phi + f(z,h) +\beta_\ep(\Phi)
=e^z + \rho \ep + f(z,h)\geq 0.
\end{align*}
Applying the comparison principle, we obtain $w^\ep\leq \Phi$, which verifies the right - hand side inequality in \eqref{we1}.

Now, we turn to the proof of \eqref{we2}. Differentiating the equation in \eqref{we_pb} with respect to $z$, we obtain
$$
\p_\tau w^\ep_z -\TT w^\ep_z + \beta_\ep'(w^\ep)w^\ep_z = - \p_z f(z,h) \geq 0.
$$
Moreover, since $w^\ep\geq 0$ and $w^\ep(-1/\ep,\tau,h)=0$, we can infer that $w^\ep_z(-1/\ep,\tau,h)\geq 0$ for all $\tau\in(0,T]$. Then, by applying the comparison principle, we conclude that $w^\ep_z\geq 0$.

Finally, we prove \eqref{we3}. For any fixed $0<\Delta\tau<T$, we define $\wt{w}^\ep(z,\tau,h):=w^\ep(z,\tau+\Delta\tau,h)$. Note that $\wt{w}^\ep(z,0,h)=w^\ep(z,\Delta\tau,h)\geq 0$, and $\wt{w}^\ep$ satisfies the same equation and boundary conditions as $w^\ep$ in $Q_{T-\Delta\tau}^\ep$. Therefore, by using the comparison principle, we can deduce that $\wt{w}^\ep\geq w^\ep$ in $Q_{T-\Delta\tau}^\ep$, which implies \eqref{we3}.
\end{proof}

\begin{proposition}\label{pro:we4}
$w^\ep$ is non decreasing with respect to $h$.
\end{proposition}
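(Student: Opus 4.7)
\textbf{Proof plan for Proposition \ref{pro:we4}.}

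The plan is to compare $w^\ep$ at two different values of $h$ directly via the penalized PDE rather than differentiating in $h$ (which is awkward because $f(z,h)$ is only piecewise $C^1$ in $h$). The key observation I would establish first is that the source term $f(z,h)$ is \emph{nonincreasing} in $h$ for every fixed $z$. This is a short case check on the three pieces of $f$: on $\{e^z\leq U'(h)\}$, increasing $h$ decreases $U'(h)$ by strict concavity of $U$, so $f=U'(h)-e^z$ decreases; on the middle piece $f\equiv 0$; on $\{e^z\geq U'(bh)\}$, $f=b(U'(bh)-e^z)$ decreases because $U'(bh)$ does; and continuity at the two interfaces (both sides equal $0$) ensures that the monotonicity holds globally in $h$, not just piecewise.

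Next, fix $0<h_1<h_2$ and set
\[
W(z,\tau):=w^\ep(z,\tau,h_2)-w^\ep(z,\tau,h_1),\qquad (z,\tau)\in\overline{\Q_T^\ep}.
\]
Subtracting the two copies of \eqref{we_pb} and applying the mean value theorem to the $C^2$ penalty term yields
\[
W_\tau-\TT W+\beta_\ep'(\xi)\,W=f(z,h_1)-f(z,h_2),
\]
where $\xi=\xi(z,\tau)$ lies between $w^\ep(z,\tau,h_1)$ and $w^\ep(z,\tau,h_2)$. By the preceding step the right-hand side is nonnegative, and by construction $\beta_\ep'\geq 0$. The boundary and initial data for $W$ are identically zero: $W(-1/\ep,\tau)=0$, $W_z(1/\ep,\tau)=0$, and $W(z,0)=0$.

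To conclude I would apply the parabolic maximum principle to the operator
\[
\mathcal{L}W:=W_\tau-\tfrac12\kappa^2 W_{zz}-\bigl(\rho-r-\tfrac12\kappa^2\bigr)W_z+\bigl(\rho+\beta_\ep'(\xi)\bigr)W,
\]
whose zeroth-order coefficient is bounded below by $\rho>0$. If $W$ attained a negative minimum somewhere in $\overline{\Q_T^\ep}$, it could not occur on the parabolic boundary $\{z=-1/\ep\}\cup\{\tau=0\}$ (where $W=0$), nor at the Neumann boundary $\{z=1/\ep\}$ by the Hopf lemma together with $W_z=0$ there, nor in the interior (where $W_\tau\leq 0$, $W_z=0$, $W_{zz}\geq 0$ would force $\mathcal{L}W<0$, contradicting $\mathcal{L}W\geq 0$). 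Hence $W\geq 0$ on $\overline{\Q_T^\ep}$, i.e. $w^\ep$ is nondecreasing in $h$. The main (mild) obstacle is simply verifying that the piecewise-defined $f$ really is monotone across its interfaces; once that is in hand the comparison argument is routine.
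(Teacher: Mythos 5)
Your proof is correct and follows essentially the same comparison-principle route as the paper, just with the linearization of the penalty term and the boundary/Hopf-lemma checks spelled out explicitly rather than absorbed into a bare appeal to the comparison principle. You also have the monotonicity of $f$ in $h$ right: $f(z,\cdot)$ is \emph{nonincreasing}, which is what the paper's displayed inequality $-f(z,h')\geq -f(z,h)$ (for $h'>h$) actually uses, even though the paper's prose says ``non decreasing'' — a typo.
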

\begin{proof}
Since $f(z,h)$ is non decreasing with respect to $h$, we have for $h'>h$,
$$
w^\ep_\tau (z,\tau,h')-\TT w^\ep (z,\tau,h') +\beta_\ep(w^\ep(z,\tau,h')) = - f(z,h') \geq - f(z,h),
$$
so $w^\ep(z,\tau,h')$ is a super solution of \eqref{we_pb}, i.e. $w^\ep(\cdot,\cdot,h')\geq w^\ep(\cdot,\cdot,h)$.
\end{proof}

\begin{proposition}\label{pro:we4'}
We have for $h'>h>0$,
$$
w^\ep (z,\tau,h')\leq w^\ep (z,\tau,h) + \frac{1}{\rho} K(h,h')\quad (z,\tau)\in \Q^\ep,
$$
where
$$
K(h,h'):=\max\Big\{U'(h)-U'(h'),\;b U'(b h)-  b U'(b h')\Big\}.
$$
\end{proposition}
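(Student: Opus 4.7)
The plan is to construct a super-solution for the penalised equation in \eqref{we_pb} with parameter $h'$ from $w^\ep(\cdot,\cdot,h)$ by adding the constant $K(h,h')/\rho$, and then invoke the comparison principle. Specifically, set
$$\Psi(z,\tau) := w^\ep(z,\tau,h) + \frac{1}{\rho} K(h,h').$$
Since the added term is constant in $(z,\tau)$ and the operator $\TT$ carries the zeroth-order term $-\rho v$, we have $\TT\bigl(\tfrac{1}{\rho}K(h,h')\bigr) = -K(h,h')$. Combined with the equation satisfied by $w^\ep(\cdot,\cdot,h)$, this gives
$$\Psi_\tau - \TT\Psi + f(z,h') + \beta_\ep(\Psi) = \bigl[f(z,h')-f(z,h)+K(h,h')\bigr] + \bigl[\beta_\ep(\Psi)-\beta_\ep(w^\ep(\cdot,\cdot,h))\bigr].$$
The second bracket is nonnegative because $K(h,h')\ge 0$ implies $\Psi\ge w^\ep(\cdot,\cdot,h)$ and $\beta_\ep$ is non-decreasing.

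The key step is the pointwise inequality $f(z,h)-f(z,h')\le K(h,h')$ for every $z\in\R$. Recalling that $f(z,h)$ equals $U'(h)-e^z$ for $e^z\le U'(h)$, vanishes for $U'(h)<e^z<U'(bh)$, and equals $bU'(bh)-be^z$ for $e^z\ge U'(bh)$, I would establish this by a case analysis over the position of $e^z$ with respect to the four decreasing thresholds $U'(h'),\,U'(bh'),\,U'(h),\,U'(bh)$, the ordering of $U'(h)$ and $U'(bh')$ depending further on whether $h\ge bh'$ or $h<bh'$. In the low regime $e^z\le U'(h')$ the difference equals exactly $U'(h)-U'(h')$, and in the high regime $e^z\ge U'(bh)$ it equals exactly $b(U'(bh)-U'(bh'))$; in each intermediate regime the difference is affine in $e^z$ and is bounded by one of these two quantities by evaluating it at the appropriate endpoint of the regime (using $U'$ decreasing). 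Thus the difference is dominated by the maximum $K(h,h')$ in all cases. This routine but somewhat delicate enumeration is the main obstacle of the proof.

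Finally, I would verify that $\Psi$ dominates $w^\ep(\cdot,\cdot,h')$ on the parabolic boundary of $\Q_T^\ep$: at $z=-\tfrac{1}{\ep}$, $\Psi=K(h,h')/\rho\ge 0=w^\ep(-\tfrac{1}{\ep},\tau,h')$; at $z=\tfrac{1}{\ep}$, the Neumann data agree since $\Psi_z(\tfrac{1}{\ep},\tau)=w^\ep_z(\tfrac{1}{\ep},\tau,h)=0=w^\ep_z(\tfrac{1}{\ep},\tau,h')$; and at $\tau=0$, $\Psi(z,0)=K(h,h')/\rho\ge 0=w^\ep(z,0,h')$. The comparison principle for the semilinear parabolic operator $\partial_\tau-\TT+\beta_\ep(\cdot)$ (arguing at an interior negative minimum of $\Psi-w^\ep(\cdot,\cdot,h')$, which is forbidden by the super-solution property together with the monotonicity of $\beta_\ep$) then yields $\Psi\ge w^\ep(\cdot,\cdot,h')$ throughout $\Q_T^\ep$, which is precisely the claimed estimate.
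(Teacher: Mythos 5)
Your proposal is correct and follows essentially the same route as the paper: construct $\Psi := w^\ep(\cdot,\cdot,h) + K(h,h')/\rho$, observe that the $-\rho$ zeroth-order term of $\TT$ turns the additive constant into a source of size $K(h,h')$ that dominates $f(z,h)-f(z,h')$, use monotonicity of $\beta_\ep$, and invoke the comparison principle. The only difference is one of explicitness: the paper simply asserts the pointwise bound $f(z,h)-f(z,h')\le K(h,h')$ as "not hard to check," whereas you outline the case-by-case verification across the thresholds $U'(h'),U'(bh'),U'(h),U'(bh)$ and also spell out the boundary/initial comparison, both of which are correct.
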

\begin{proof}
It is not hard to check that
$$f(z,h)-f(z,h')\leq K(h,h').$$
Let $\psi(z,\tau):=w^\ep (z,\tau,h) + K(h,h')/\rho$, then it satisfies that
$$
\psi_\tau (z,\tau)-\TT \psi (z,\tau) +\beta_\ep(\psi (z,\tau)) \geq - f(z,h)+ K(h,h')\geq - f(z,h').
$$
So $\psi(z,\tau)$ is a super solution of Problem \eqref{we_pb}, i.e. $\psi(z,\tau)\geq w^\ep(\cdot,\cdot,h')$.
\end{proof}

Based on the above lemmas, let $\varepsilon\rightarrow0$, via the embedding theorem, we then obtain the following existence and uniqueness of solution to Problem \eqref{w_pb}.
\begin{theorem}\label{thm:w}
The problem \eqref{w_pb} has a unique solution $w\in C^{1+\al,\frac{1+\al}{2}}(\R\times[0,T])\bigcap W^{2,1}_{p,\;\mathrm{loc}}(\R\times(0,T])$, for any $p>3$ and $0<\al<1$. Furthermore,
\begin{align}\label{w1}
0\leq &\;w\leq \frac{1}{r} e^z,\\ \label{w2}
&\;w_z \geq 0,\\ \label{w3}
&\;w_\tau \geq 0,\\ \label{w4}
0\leq&\; w_h \leq { \frac{1}{\rho}\max\Big\{-U''(h), -b^2 U''(bh)\Big\}.}
\end{align}
\end{theorem}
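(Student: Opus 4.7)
The plan is to pass to the limit as $\ep \to 0^+$ in the penalty approximation \eqref{we_pb} and inherit the bounds from Lemma~\ref{thm:we} and Propositions~\ref{pro:we4}-\ref{pro:we4'}. The key preliminary observation is that the penalized term is uniformly controlled: since $\beta_\ep$ is nondecreasing with $\beta_\ep(0) = -U'(h)$ and $\beta_\ep(w^\ep) = 0$ whenever $w^\ep \geq \ep$, the estimate $w^\ep \geq 0$ from \eqref{we1} gives
\[
-U'(h) \;\leq\; \beta_\ep(w^\ep) \;\leq\; 0.
\]
Together with the explicit bound on $f(z,h)$, the right-hand side of the penalized PDE is uniformly bounded in $\ep$ on compact subsets of $\R \times (0,T]$.

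Next, for any fixed compact set $K \subset \R \times (0,T]$, and for $\ep$ sufficiently small so that $K \subset \Q_T^\ep$, standard interior $W^{2,1}_p$ estimates for linear parabolic equations give a uniform bound $\|w^\ep\|_{W^{2,1}_p(K)} \leq C_K$. For $p > 3$, the Sobolev embedding yields uniform $C^{1+\al,(1+\al)/2}(K)$ bounds with $\al = 1 - 3/p$. A standard diagonal argument produces a subsequence converging locally in $C^{1,0}$ and weakly in $W^{2,1}_{p,\,\text{loc}}$ to some function $w$. To pass to the limit in the equation, note that on the open set $\{w > 0\}$, uniform convergence implies $w^\ep \geq \ep$ eventually and hence $\beta_\ep(w^\ep) = 0$, so $w_\tau - \TT w + f = 0$ almost everywhere there; combined with $w \geq 0$, this is equivalent to the min-formulation of \eqref{w_pb}.

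Uniqueness follows from the standard comparison principle applied to the min-formulation of the obstacle problem. The estimates \eqref{w1}, \eqref{w2}, and \eqref{w3} are immediate by passing to the limit in \eqref{we1}-\eqref{we3}. For \eqref{w4}, the lower bound $w_h \geq 0$ is inherited directly from Proposition~\ref{pro:we4}. For the upper bound, Proposition~\ref{pro:we4'} gives, for $h' > h > 0$,
\[
\frac{w(z,\tau,h') - w(z,\tau,h)}{h'-h} \;\leq\; \frac{K(h,h')}{\rho(h'-h)}.
\]
Letting $h' \to h^+$ and using the definitions
\[
\lim_{h' \to h^+} \frac{U'(h)-U'(h')}{h'-h} = -U''(h), \qquad \lim_{h' \to h^+} \frac{bU'(bh)-bU'(bh')}{h'-h} = -b^2 U''(bh),
\]
yields $w_h \leq \frac{1}{\rho}\max\{-U''(h), -b^2 U''(bh)\}$.

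The main technical obstacle is ensuring that the subsequential limit indeed satisfies the obstacle formulation almost everywhere, particularly across the free boundary $\partial\{w > 0\}$; this is the standard delicate step in penalty-method proofs, but it is handled cleanly because the complementarity condition $w \cdot (w_\tau - \TT w + f) = 0$ can be obtained in the limit by multiplying the penalized equation by $w^\ep$ (or by noting that $w_\tau - \TT w + f \geq 0$ follows from $\beta_\ep(w^\ep) \leq 0$ passing to the limit). The $h$-monotonicity argument for \eqref{w4} is the other step requiring care, as it depends crucially on the quantitative comparison in Proposition~\ref{pro:we4'} rather than merely qualitative monotonicity.
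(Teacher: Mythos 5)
Your proposal is correct and follows essentially the same route the paper sketches in a single sentence ("let $\varepsilon\to 0$, via the embedding theorem"): uniform $W^{2,1}_p$ interior estimates for the penalized family (using $-U'(h)\le\beta_\ep(w^\ep)\le 0$), Sobolev embedding into $C^{1+\alpha,(1+\alpha)/2}$ on compacts, a diagonal extraction, passage to the limit in the complementarity system, and inheritance of \eqref{w1}--\eqref{w3} from Lemma~\ref{thm:we} and of \eqref{w4} from the incremental quotient in Proposition~\ref{pro:we4'}. You have simply filled in the standard details that the paper omits.
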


\subsection{Properties of the Free Boundary in Problem \eqref{w_pb}}

In the previous subsection, we established the existence and uniqueness of the solution to Problem~\eqref{w_pb}. Since the solution \( w \) is independent of the terminal time \( T \), we may, without loss of generality, extend the time variable to \( \tau \in (0, +\infty) \) in the analysis below.

Thanks to the monotonicity result \eqref{w2}, we define the free boundary of Problem~\eqref{w_pb} as
\begin{align}\label{eq:ytau}
z^*(\tau,h) := \sup \left\{ z \in \mathbb{R} \,\big|\, w(z,\tau,h) = 0 \right\}, \quad \tau, h \in \mathbb{R}^+.
\end{align}
Moreover, from inequalities \eqref{w3}--\eqref{w4}, it follows that \( z^*(\tau,h) \) is non-increasing in both \( \tau \) and \( h \).

We now demonstrate that the free boundary remains finite under all admissible inputs.

\begin{proposition}\label{pro:y_finite}
For any fixed \( h > 0 \), the limit
\[
z^*(+\infty,h) := \lim_{\tau \to +\infty} z^*(\tau,h)
\]
exists and is finite, i.e., \( z^*(+\infty,h) > -\infty \).
\end{proposition}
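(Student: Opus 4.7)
The existence of the limit $z^*(+\infty, h)$ is immediate from Theorem~\ref{thm:w}: the estimate $w_\tau \geq 0$ implies that the zero set of $w(\cdot, \tau, h)$ is non-increasing in $\tau$, hence $z^*(\tau, h)$ defined by \eqref{eq:ytau} is non-increasing and the limit exists in $[-\infty, +\infty)$. I would argue by contradiction that the limit cannot equal $-\infty$.

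Assume toward contradiction that $z^*(+\infty, h) = -\infty$. Since $w$ is non-decreasing in $\tau$ and dominated by $e^z/r$ (by \eqref{w1}), the pointwise limit $w^\infty(z,h) := \lim_{\tau \to \infty} w(z,\tau,h)$ exists and satisfies $0 \leq w^\infty \leq e^z/r$. A standard monotone-limit argument, combined with the uniform $W^{2,p}_{\rm loc}$ estimates for $w$ provided by Theorem~\ref{thm:w}, shows that $w^\infty$ solves the stationary obstacle problem $\min\{-\TT w^\infty + f(\cdot, h),\; w^\infty\} = 0$ on $\R$ in the strong sense. Under the contradiction hypothesis $z^*(+\infty,h) = -\infty$ we must have $w^\infty(z,h) > 0$ for every $z$, and hence $\TT w^\infty = f$ holds classically on all of $\R$.

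The contradiction then comes from an explicit ODE analysis in the left region $\{z \leq z_1\}$ with $z_1 := \ln U'(h)$, where $f(z, h) = U'(h) - e^z$. The general solution of $\TT w = U'(h) - e^z$ is
\[
w^\infty(z,h) = C_1 e^{\lambda_1 z} + C_2 e^{\lambda_2 z} + \tfrac{1}{r}e^z - \tfrac{U'(h)}{\rho},
\]
where $\lambda_1, \lambda_2$ are the roots of $\tfrac{1}{2}\kappa^2 \lambda^2 + (\rho - r - \tfrac{1}{2}\kappa^2)\lambda - \rho = 0$; Vieta gives $\lambda_1 \lambda_2 = -2\rho/\kappa^2 < 0$, so that $\lambda_1 > 0 > \lambda_2$. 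The bound $0 \leq w^\infty \leq e^z/r \to 0$ as $z \to -\infty$ forces $C_2 = 0$, since otherwise the term $C_2 e^{\lambda_2 z}$ would blow up. With $C_2 = 0$ the remaining expression $C_1 e^{\lambda_1 z} + \tfrac{1}{r}e^z - \tfrac{U'(h)}{\rho}$ tends to $-U'(h)/\rho < 0$ as $z \to -\infty$, contradicting $w^\infty \geq 0$. Hence $z^*(+\infty, h) > -\infty$. The one subtle step is the strong-sense validity of the stationary obstacle equation for $w^\infty$, which follows from the monotone convergence together with the uniform $W^{2,p}_{\rm loc}$ bounds already established in Theorem~\ref{thm:w}; this is the main technical hurdle, but it is routine.
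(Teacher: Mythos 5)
Your argument is correct in outline and reaches the right contradiction, but it takes a genuinely different route from the paper, and the route you take requires strictly more from the stationary limit than the paper does.

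The paper only needs the one-sided inequality $-\TT w^\infty + f \leq 0$, i.e. $\TT w^\infty \geq f$ on the positivity set. This is cheap: on any fixed compact interval lying eventually to the right of $z^*(\tau,h)$ one has $w_\tau - \TT w + f = 0$, so $\TT w - f = w_\tau \geq 0$, and a weak limit of nonnegative quantities stays nonnegative. From $\TT w^\infty \geq f > \tfrac12 U'(h)$ for $z$ far to the left, the paper then integrates twice and uses only the boundedness and monotonicity of $w^\infty$ to drive the left side to zero, producing the contradiction. No claim that $w_\tau \to 0$ is needed.

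You instead upgrade the inequality to the equality $\TT w^\infty = f$ (via the assertion that $w^\infty$ solves the stationary obstacle problem in the strong sense) and then read off the contradiction from the explicit solution of the constant-coefficient ODE on $\{z < \ln U'(h)\}$: the decaying mode is forced out by the bound $0 \leq w^\infty \leq e^z/r$, and what remains tends to $-U'(h)/\rho < 0$ as $z \to -\infty$. Your ODE computations (roots $\lambda_1 > 0 > \lambda_2$ with $\lambda_1\lambda_2 = -2\rho/\kappa^2$, particular solution $\tfrac1r e^z - U'(h)/\rho$) are correct. The trade-off is that the equality $\TT w^\infty = f$ is not a freebie: $w$ monotone and bounded in $\tau$ does \emph{not} by itself give pointwise $w_\tau \to 0$. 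The usual fix is to consider time translates $w(\cdot,\, n+\cdot\,,h)$, use interior parabolic $W^{2,1}_p$ estimates (uniform in $n$ because $\|w\|_\infty$ is locally controlled) to pass to a subsequential limit in $C^{1,\alpha}_{\rm loc}$, and observe that the limit is $\tau$-independent so its $\tau$-derivative vanishes. You flag this as the "main technical hurdle but routine"; it is doable, but it is precisely the step the paper's weaker inequality avoids. If you prefer the explicit ODE picture, note that the inequality $\TT w^\infty \geq U'(h) - e^z$ alone does not immediately sign the decaying mode, so a maximum-principle or Phragm\'en--Lindel\"of argument would then be needed — at which point you are close to re-deriving the paper's integration argument. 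In short: both proofs work, yours is more explicit and identifies the exact offending constant $-U'(h)/\rho$, but it pays for that explicitness with the extra (nontrivial, though standard) passage to the stationary equation.
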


\begin{proof}
Since \( w \) is non-decreasing in \( \tau \) and uniformly bounded above for each fixed \( z \) and \( h \), there exists a limiting function \( w^\infty(z,h) := \lim_{\tau \to +\infty} w(z,\tau,h) \).

Assume for contradiction that \( w^\infty(z,h) > 0 \) for all \( z \in \mathbb{R} \). Then, by continuity and weak convergence,
\begin{align}\label{Luinfty}
- \TT w^\infty(z,h) + f(z,h) = - \lim_{\tau \to +\infty} w_\tau(z,\tau,h) \leq 0 \quad \text{in } \mathbb{R}.
\end{align}

Now consider \( z_0 < \ln(\tfrac{1}{2} U'(h)) \). Since \( f(z,h) = U'(h) - e^z \), we have \( f(z,h) > \tfrac{1}{2} U'(h) \) and therefore \( -\TT w^\infty(z,h) \leq -\tfrac{1}{2} U'(h) \) for all \( z < z_0 \).

But because \( w^\infty \geq 0 \) and \( w^\infty_z \geq 0 \), we obtain
\[
\frac{\kappa^2}{2} w^\infty_{zz}(z,h) + \rho w^\infty_z(z,h) \geq \frac{1}{2} U'(h), \quad z < z_0.
\]
Integrating both sides from \( y \) to \( y+1 \), and then integrating in \( y \) from \( N \) to \( N+1 \), yields
\begin{multline*}
\frac{\kappa^2}{2} \big(w^\infty(N+2,h) - 2w^\infty(N+1,h) + w^\infty(N,h)\big) \\
+ \rho \int_N^{N+1} \big(w^\infty(y+1,h) - w^\infty(y,h)\big) \mathrm{d}y \geq \frac{1}{2} U'(h).
\end{multline*}
However, the left-hand side tends to zero as \( N \to -\infty \), since \( w^\infty \) is non-decreasing and bounded, leading to a contradiction.
\end{proof}

\begin{proposition}\label{pro:zup}
For each \( h > 0 \), the free boundary satisfies the upper bound
\[
z^*(\tau,h) \leq \ln(U'(h)).
\]
\end{proposition}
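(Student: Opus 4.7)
The plan is to argue by contradiction using the parabolic strong minimum principle, after first establishing strict positivity of $w$ in the easier region $\{z > \ln U'(bh),\ \tau > 0\}$ where the source term $f$ is strictly negative.

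As a preliminary step, I would show that $w(z,\tau,h) > 0$ whenever $z > \ln U'(bh)$ and $\tau > 0$. Suppose to the contrary that $w(z_*, \tau_*, h) = 0$ for some such $(z_*, \tau_*)$. From $w_z \geq 0$, $w_\tau \geq 0$ (\thmref{thm:w}) and $w \geq 0$, the function $w$ must vanish identically on the quarter-cylinder $(-\infty, z_*] \times [0, \tau_*]$. Picking $\delta > 0$ small enough that $z_* - \delta > \ln U'(bh)$, the point $(z_* - \delta, \tau_*/2)$ is an interior point of $\{w = 0\}$, so all partial derivatives of $w$ vanish there, and the variational inequality then forces $f(z_* - \delta, h) \geq 0$. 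But $z_* - \delta > \ln U'(bh)$ gives $f(z_* - \delta, h) = b\bigl(U'(bh) - e^{z_* - \delta}\bigr) < 0$, a contradiction.

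Next, on $\Omega := (\ln U'(h), +\infty) \times (0, T]$ the source satisfies $f(z,h) \leq 0$, and hence the variational inequality yields $w_\tau - \TT w \geq -f \geq 0$ throughout $\Omega$: by the PDE where $w > 0$, and by the obstacle supersolution inequality where $w = 0$. Thus $w$ is a non-negative supersolution of the constant-coefficient linear parabolic operator $\partial_\tau - \TT$ on $\Omega$. Assume for contradiction that $z^*(\tau_0, h) > \ln U'(h)$ for some $\tau_0 > 0$, and pick $z_1 \in (\ln U'(h), z^*(\tau_0, h))$, so that $w(z_1, \tau_0, h) = 0$. In any bounded rectangle $R := (\ln U'(h), M) \times (0, \tau_0]$ with $M > \ln U'(bh)$, the point $(z_1, \tau_0)$ is an interior minimum of $w$; the parabolic strong minimum principle---valid since the zeroth-order coefficient of $\partial_\tau - \TT$ equals $+\rho \geq 0$---then forces $w \equiv 0$ on the whole past-connected slab $(\ln U'(h), M) \times (0, \tau_0]$. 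Choosing any $(z', \tau')$ with $z' > \ln U'(bh)$ and $\tau' \in (0, \tau_0)$ contradicts the preliminary step, yielding $z^*(\tau,h) \leq \ln U'(h)$.

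The hard part is the intermediate strip $(\ln U'(h), \ln U'(bh))$, on which $f \equiv 0$: the direct variational-inequality argument alone only gives the weaker bound $z^*(\tau, h) \leq \ln U'(bh)$, and sharpening this down to $\ln U'(h)$ requires propagating the strict positivity of $w$ from $\{z > \ln U'(bh)\}$ leftward across this neutral strip. The strong minimum principle encodes exactly this diffusive propagation, making it the natural tool; an alternative approach via an explicit subsolution seems more delicate because the sign of the relevant perturbation depends on the relative sizes of $\rho$ and $r$.
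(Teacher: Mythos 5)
Your proof is correct and follows the same basic route as the paper's: show that $w$ is a non-negative supersolution of $\partial_\tau - \TT$ on $(\ln U'(h),+\infty)\times(0,T]$ and invoke the parabolic strong minimum principle. Your version is in fact more careful than the paper's at two points. The paper asserts $f(z,h)=U'(h)-e^z<0$ throughout this region, but that branch of the piecewise definition of $f$ is only valid for $e^z\le U'(h)$; on the strip $\ln U'(h)<z<\ln U'(bh)$ one has $f\equiv 0$, so $-f\ge 0$ only non-strictly --- exactly the subtlety you flag as the hard part. Moreover, the strong minimum principle can only conclude $w>0$ after one knows $w$ is not identically zero in the region, which the paper asserts without justification; your preliminary step, arguing directly from the variational inequality at an interior point of $\{w=0\}$ with $z>\ln U'(bh)$ where $f$ is strictly negative, supplies precisely this missing ingredient.
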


\begin{proof}
We show that \( w(z,\tau,h) > 0 \) for all \( z > \ln(U'(h)) \). In this region, \( f(z,h) = U'(h) - e^z < 0 \), and hence
\[
w_\tau(z,\tau,h) - \TT w(z,\tau,h) \geq -f(z,h) > 0.
\]
Since \( w(z,\tau,h) \geq 0 \) and not identically zero in \( (\ln(U'(h)), +\infty) \), the strong maximum principle implies \( w(z,\tau,h) > 0 \) in this domain.
\end{proof}

\begin{proposition}\label{pro:y3}
The free boundary \( z^*(\tau,h) \) is continuous in \( (\tau, h) \in \mathbb{R}^+ \times \mathbb{R}^+ \), and satisfies
\[
\lim_{\tau \to 0+} z^*(\tau,h) = \ln(U'(h)).
\]
\end{proposition}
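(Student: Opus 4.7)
The plan is to establish the two claims separately: the initial limit $z^*(0+,h)=\ln U'(h)$, which requires an explicit barrier construction, and the continuity of $z^*$ on $(0,\infty)\times\mathbb{R}^+$, which combines the monotonicity supplied by \thmref{thm:w} with a viscosity-type argument.

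For the initial limit, \propref{pro:zup} provides $z^*(\tau,h)\le\ln U'(h)$ and $w_\tau\ge 0$ makes $z^*$ non-increasing in $\tau$, so $\lim_{\tau\to 0+}z^*(\tau,h)$ exists in $(-\infty,\ln U'(h)]$. To match the upper bound, fix $\varepsilon\in(0,U'(h))$, set $z_0:=\ln(U'(h)-\varepsilon)$, and for each large $N$ build the local supersolution
\[
\Phi_N(z,\tau)\;:=\;\tau\,(e^z-e^{z_0})_+\;+\;be^{-N}\tau,\qquad(z,\tau)\in(-N,z_1)\times(0,\tau_0],
\]
with $z_1:=z_0+\ln\frac{1}{1-b}$ and $\tau_0:=\varepsilon/(2\rho e^{z_0})$. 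A direct computation on each of the four regions $z\le z_0$, $z_0<z\le\ln U'(h)$, $\ln U'(h)<z\le\ln U'(bh)$ and $z>\ln U'(bh)$ verifies $(\Phi_N)_\tau-\TT\Phi_N+f\ge 0$ throughout, since on each region the leading positive contribution $\varepsilon$ or $e^z-e^{z_0}$ dominates the $O(\tau)$ correction $-\rho\tau e^{z_0}+r\tau e^z$ once $\tau\le\tau_0$. On the parabolic boundary $\Phi_N\ge w$ holds using $w(\cdot,0)=0$, the a priori bound $w(z,\tau)\le be^z\tau$ (valid because $be^z\tau$ is itself a global supersolution, via the identity $\TT(be^z\tau)=-rbe^z\tau$), and the choice of $z_1$ which yields $be^{z_1}=e^{z_1}-e^{z_0}$. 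The comparison principle for the obstacle problem then delivers $w\le\Phi_N$ on the rectangle; since $\Phi_N(z,\tau)=be^{-N}\tau$ whenever $z\le z_0$, letting $N\to\infty$ gives $w(z,\tau,h)=0$ there and hence $z^*(\tau,h)\ge z_0=\ln(U'(h)-\varepsilon)$ for every $\tau\in(0,\tau_0]$. Sending $\varepsilon\downarrow 0$ concludes that $\lim_{\tau\to 0+}z^*(\tau,h)=\ln U'(h)$.

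For continuity of $z^*$ on $(0,\infty)\times\mathbb{R}^+$, monotonicity in each of $\tau$ and $h$ by \eqref{w3}--\eqref{w4} ensures only countably many potential jumps. Upper semicontinuity follows immediately from the closedness of $\{w=0\}$ and the joint continuity of $w$: if $(\tau_n,h_n)\to(\tau_0,h_0)$ and $z_n=z^*(\tau_n,h_n)\to z_\infty$, passing to the limit in $w(z_n,\tau_n,h_n)=0$ yields $w(z_\infty,\tau_0,h_0)=0$, hence $z_\infty\le z^*(\tau_0,h_0)$. Lower semicontinuity is proved by contradiction: if a jump $z^\sharp:=\liminf z^*(\tau,h)<z^*(\tau_0,h_0)$ occurred, pick $z\in(z^\sharp,z^*(\tau_0,h_0))\subset(-\infty,\ln U'(h_0))$, so that $-f(z,h_0)<0$. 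Then $w$ vanishes on an open spatial interval around $z$ at $(\tau_0,h_0)$, forcing $w_z(z,\tau_0,h_0)=0$, while along a sequence $(\tau_n,h_n)\to(\tau_0,h_0)$ with $w(z,\tau_n,h_n)>0$ the PDE $w_\tau-\TT w+f=0$ holds classically. A viscosity test by a suitable paraboloid touching $w$ from above at the limit corner $(z,\tau_0,h_0)$, applied to the variational inequality \eqref{w_pb}, combined with $-f(z,h_0)<0$ and $w_\tau\ge 0$, yields the inconsistency that rules out the jump.

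The main obstacle is the lower-semicontinuity step, since $w_{zz}$ is only $W^{2,1}_{p,\mathrm{loc}}$ in the non-contact region and may develop a non-removable jump across the free boundary; any rigorous execution must avoid pointwise use of $w_{zz}$ and instead work via the viscosity characterization of \eqref{w_pb} or a parabolic rescaling argument at the candidate jump point. The initial-value barrier is comparatively routine, and the mild non-smoothness of $(e^z-e^{z_0})_+$ at $z=z_0$ is handled either by mollification or directly in the viscosity comparison framework for the obstacle problem.
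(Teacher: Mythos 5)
Your approach is genuinely different from the paper's: the paper proves both assertions by passing to the limit in the PDE $w_\tau-\TT w + f = 0$ (in the non-contact region) and exploiting $w_\tau\ge 0$ together with $f>0$ when $e^z<U'(h)$, whereas you propose an explicit supersolution barrier for the initial limit and a viscosity sketch for lower semicontinuity. There is, however, a genuine gap in the barrier step. The function $\Phi_N(z,\tau)=\tau(e^z-e^{z_0})_+ + b e^{-N}\tau$ has a \emph{convex} kink at $z=z_0$: the $z$-derivative jumps upward from $0$ to $\tau e^{z_0}$, so $\partial_{zz}\Phi_N$ carries a \emph{positive} Dirac mass at $z_0$. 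Since $\TT$ has the term $+\tfrac{\kappa^2}{2}\partial_{zz}$, this Dirac enters $(\Phi_N)_\tau-\TT\Phi_N+f$ with a \emph{negative} sign, and the supersolution inequality fails there — distributionally, and equally in the viscosity sense, because $C^2$ test functions touching $\Phi_N$ from below at $(z_0,\tau)$ with slope interior to $(0,\tau e^{z_0})$ may have $\varphi_{zz}$ arbitrarily large, forcing $\varphi_\tau-\TT\varphi+f\to-\infty$. This is not the ``mild non-smoothness'' your last paragraph claims: a convex kink is exactly the wrong side for a supersolution, and mollification only trades the Dirac for a bump $\partial_{zz}\Phi_N^\delta\sim\tau e^{z_0}/\delta$ on a band of width $\delta$, forcing $\tau_0\lesssim\delta\varepsilon/\kappa^2$ rather than your stated $\tau_0=\varepsilon/(2\rho e^{z_0})$. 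The result can be salvaged by running $\delta\to0$ \emph{after} $\tau\to0$ and \emph{then} $\varepsilon\to0$, but that three-parameter balance is not in your writeup and your claim that $(\Phi_N)_\tau-\TT\Phi_N+f\ge 0$ holds ``throughout'' is false as stated.

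Separately, the lower-semicontinuity step — which you yourself flag as ``the main obstacle'' — is only gestured at (``a viscosity test by a suitable paraboloid ... yields the inconsistency''). The paper's argument here is concrete: on the putative jump interval $(\underline z,\overline z)$ one has $w\equiv 0$ at $(\tau_0,h_0)$, hence $\TT w=0$ there, while the PDE with equality holds at $(z,\tau_0,h_0)$ by passing to the limit along $(\tau_n,h_n)$, yielding $0=w_\tau+f>0$ since $f(z,h_0)=U'(h_0)-e^z>0$ on the interval and $w_\tau\ge 0$. Your sketch needs to be filled in to at least this level of detail; in particular, the remark that monotonicity in each variable ``ensures only countably many potential jumps'' is not useful here, since for a function of two variables the jump set can be a whole curve.
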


\begin{proof}
Suppose by contradiction that \( z^* \) is discontinuous at some point \( (\tau_0, h_0) \). Then there exists a sequence \( (\tau_n, h_n) \to (\tau_0, h_0) \) such that \( z^*(\tau_n, h_n) \not\to z^*(\tau_0, h_0) \). Denote
\[
\underline{z} = \min\big\{\lim z^*(\tau_n,h_n),\, z^*(\tau_0,h_0)\big\}, \quad
\overline{z} = \max\big\{\lim z^*(\tau_n,h_n),\, z^*(\tau_0,h_0)\big\}.
\]
Then for any \( z \in (\underline{z}, \overline{z}) \), we have \( w(z,\tau_0,h_0) = 0 \) and
\[
w_\tau(z,\tau_0,h_0) - \TT w(z,\tau_0,h_0) + f(z,h_0) = 0.
\]
However, since \( f(z,h_0) > 0 \) in this interval and \( w_\tau \geq 0 \), we obtain
\[
w_\tau + f > 0,
\]
contradicting the equation above. Thus \( z^*(\tau,h) \) is continuous.

To show \( \lim_{\tau \to 0+} z^*(\tau,h) = \ln(U'(h)) \), note that at \( \tau = 0 \), the initial condition \( w(z,0,h) = 0 \), and since \( f(z,h) > 0 \) when \( z < \ln(U'(h)) \), we must have \( z^*(\tau,h) \to \ln(U'(h)) \) as \( \tau \to 0+ \).
\end{proof}

\begin{proposition}\label{pro:y4}
The free boundary \( z^*(\tau,h) \) is strictly decreasing with respect to \( \tau \).
\end{proposition}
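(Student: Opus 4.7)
The argument proceeds by contradiction. Since \eqref{w3} implies $z^*(\cdot,h)$ is non-increasing, failure of strict monotonicity yields $0 < \tau_1 < \tau_2$ with $z^*(\tau,h) \equiv z_0$ on $[\tau_1,\tau_2]$; by Proposition~\ref{pro:zup}, $z_0 \le \ln U'(h)$. Pick $\delta > 0$ with $z_0 + \delta < \ln U'(bh)$ and set $D := (z_0, z_0+\delta) \times (\tau_1,\tau_2)$. In $D$ we have $w > 0$ and $f(\cdot,h)$ smooth, so by interior parabolic regularity $w \in C^\infty(D)$ and $w_\tau - \TT w + f(z,h) = 0$ holds pointwise. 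The $C^{1+\alpha}$-smoothness of $w$ combined with $w \equiv 0$ in the coincidence region gives the smooth-fit identities $w(z_0,\tau) = 0$ and $w_z(z_0,\tau) = 0$ for all $\tau \in [\tau_1,\tau_2]$. Differentiating these two identities in $\tau$ yields the crucial boundary data $w_\tau(z_0,\tau) = 0$ and $w_{z\tau}(z_0,\tau) = 0$ throughout $(\tau_1,\tau_2)$.

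I would then differentiate the PDE in $\tau$ to obtain $\eta_\tau - \TT\eta = 0$ in $D$, where $\eta := w_\tau \ge 0$ by \eqref{w3}. To apply Hopf's lemma cleanly despite the $-\rho u$ term in $\TT$, I would pass to $\tilde\eta := e^{\rho\tau}\eta$, which satisfies the zeroth-order-free equation
\[
\tilde\eta_\tau - \tfrac{1}{2}\kappa^2 \tilde\eta_{zz} - \bigl(\rho - r - \tfrac{1}{2}\kappa^2\bigr) \tilde\eta_z = 0 \quad \text{in } D,
\]
with $\tilde\eta \ge 0$ and boundary value zero on the flat lateral boundary $\{z = z_0\}$. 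The parabolic Hopf/strong maximum principle dichotomy then asserts, at any boundary point $(z_0,\tau_0)$ with $\tau_0 \in (\tau_1,\tau_2)$, either $\tilde\eta \equiv 0$ on the parabolic past of $(z_0,\tau_0)$ in $D$, or $\tilde\eta_z(z_0^+,\tau_0) > 0$. Because $\tilde\eta_z(z_0,\tau_0) = e^{\rho\tau_0}\, w_{z\tau}(z_0,\tau_0) = 0$, the strict-positivity alternative is impossible, forcing $w_\tau \equiv 0$ throughout $D$.

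The final step is to propagate $w_\tau \equiv 0$ globally and extract a contradiction from the initial datum. The non-coincidence set $\{w > 0\}$ is connected (since $z^*$ is continuous in $\tau$), and $\eta$ solves the same homogeneous parabolic equation there, so the strong maximum principle forces $\eta \equiv 0$ on $\{w > 0\} \cap \{\tau < \tau_2\}$. Fixing $z_1 > \ln U'(h)$, Proposition~\ref{pro:zup} ensures $(z_1,\tau) \in \{w > 0\}$ for all $\tau > 0$, so $w(z_1,\cdot)$ is constant on $(0,\tau_2)$; continuity at $\tau = 0$ and the initial datum $w(z_1,0)=0$ then give $w(z_1,\tau) \equiv 0$, contradicting $w(z_1,\tau) > 0$.

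The principal technical obstacle is verifying that $\eta$ extends continuously to the lateral boundary $\{z_0\}\times[\tau_1,\tau_2]$ with value zero, as required for Hopf's lemma. This is obtained by passing to the limit $z \to z_0^+$ in the PDE: combining $w(z_0,\tau)=0$ and $w_z(z_0,\tau)=0$ with $w_\tau - \TT w + f = 0$ yields $\tfrac{1}{2}\kappa^2 w_{zz}(z_0^+,\tau) = f(z_0,h)$, and hence $w_\tau(z_0^+,\tau) = 0$, matching the identically-zero value along the boundary segment.
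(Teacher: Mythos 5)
Your overall strategy is the same as the paper's: differentiate the smooth-fit conditions $w(z_0,\tau)=w_z(z_0,\tau)=0$ along the supposed vertical segment of the free boundary to get $w_\tau(z_0,\tau)=w_{z\tau}(z_0,\tau)=0$, then apply Hopf's lemma to $\eta:=w_\tau$. Where you genuinely improve on the paper's terse argument is in handling the dichotomy: the paper asserts ``by the Hopf lemma, $w_{\tau z}(z_0,\tau,h_0)>0$'' without ruling out the alternative conclusion $w_\tau\equiv 0$ in the parabolic past. Your Steps~3--5 close exactly that gap, and they are correct: the propagation argument (connectivity of $\{w>0\}\cap\{\tau<\tau_2\}$, using that $z^*(\cdot,h)$ is non-increasing so the domain monotonically expands as $\tau$ increases, hence horizontal-then-backward-in-time chains stay in the domain) is clean, and the contradiction via the initial datum $w(z_1,0)=0$ at a fixed $z_1>\ln U'(h)$ finishes it. This is a real refinement, not just a verbose restatement.

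The one place where your argument is not quite right is the final paragraph, where you try to establish that $\eta$ extends continuously to the segment $\{z_0\}\times[\tau_1,\tau_2]$ with value zero by ``passing to the limit in the PDE.'' The claim $\tfrac{1}{2}\kappa^2\,w_{zz}(z_0^+,\tau)=f(z_0,h)$ is, via the PDE, \emph{equivalent} to the statement $w_\tau(z_0^+,\tau)=0$ you are trying to prove; neither follows from $w=w_z=0$ on the boundary plus the PDE alone, since a priori you do not know that $w_{zz}$ has a one-sided limit along the free boundary. The clean fix is simply to cite the continuity of $w_\tau(\cdot,\cdot,h)$ across the free boundary, which the paper records in Proposition~\ref{pro:y5} via Friedman's regularity arguments for parabolic obstacle problems; that proposition does not depend on the present one, so there is no circularity in invoking it here. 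Once $w_\tau$ is known to be continuous, $w_\tau(z_0^+,\tau)=w_\tau(z_0,\tau)=0$ is immediate from differentiating $w(z_0,\tau)\equiv 0$, and your Hopf/propagation argument goes through.
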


\begin{proof}
Assume for contradiction that \( z^*(\tau,h_0) = z_0 \) for all \( \tau \in [\tau_1, \tau_2] \). Then
\[
w(z_0,\tau,h_0) = 0, \quad w_z(z_0,\tau,h_0) = 0, \quad w_{z\tau}(z_0,\tau,h_0) = 0.
\]
Since \( w \) solves \( w_\tau - \TT w + f = 0 \) in \( z > z_0 \), we have $w_\tau$ that solves a linear equation with zero initial gradient at the boundary. By the Hopf lemma, we must have \( w_{\tau z}(z_0,\tau,h_0) > 0 \), which contradicts the fact that \( w_{\tau z} = 0 \). Thus, \( z^*(\tau,h) \) is strictly decreasing in \( \tau \).
\end{proof}

\begin{proposition}\label{pro:y5}
{ For each fixed \( h > 0 \), the function \( w_\tau(\cdot, \cdot, h) \in C(\mathbb{R}\times \R^+) \), and the free boundary \( z^*(\cdot,h) \in C^\infty(\mathbb{R}^+) \).}
\end{proposition}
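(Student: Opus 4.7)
The plan proceeds in two stages: continuity of $w_\tau$ first, then $C^\infty$ smoothness of $z^*(\cdot, h)$ via a hodograph straightening of the free boundary.

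For continuity of $w_\tau$, I would first note that in the interior of the non-coincidence region $\{(z,\tau) : z > z^*(\tau, h)\}$, $w$ is a classical solution of the linear parabolic equation $w_\tau - \TT w + f(z,h) = 0$, and since $f(\cdot, h)$ is piecewise smooth with only two kink points, interior parabolic Schauder estimates yield $w \in C^{2+\alpha, 1+\alpha/2}$ locally away from the kink lines, so $w_\tau$ is continuous there. In the interior of the coincidence region, $w \equiv 0$ gives $w_\tau \equiv 0$ trivially. Hence the issue reduces to continuity at a free boundary point $(z^*(\tau_0, h), \tau_0)$, where from the coincidence side $w_\tau = 0$. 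From the non-coincidence side, I would invoke the classical $C^{1,1}$ matching for parabolic obstacle problems: combining \propref{pro:zup} ($z^*\leq \ln U'(h)$) with \propref{pro:y3} (so that $z^*(\tau,h) < \ln U'(h)$ for $\tau > 0$) yields the strict non-degeneracy $f(z^*(\tau, h), h) = U'(h) - e^{z^*(\tau, h)} > 0$. Classical regularity then upgrades $w$ to $C^{1,1}_{\mathrm{loc}}$ globally with $\lim_{z \downarrow z^*(\tau, h)} \tfrac{1}{2}\kappa^2 w_{zz}(z,\tau,h) = f(z^*(\tau, h), h)$. Substituting this into the equation and using $w, w_z \to 0$ at the free boundary gives $w_\tau \to 0$ from the non-coincidence side, confirming continuity of $w_\tau$ with boundary value $0$.

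For the smoothness of $z^*(\cdot, h)$, I would apply a hodograph transform to straighten the free boundary. By the Hopf lemma argument already used in \propref{pro:y4}, $w_z > 0$ strictly on the non-coincidence side in a neighborhood of the free boundary, so for each $(\tau, h)$ the map $z \mapsto w(z, \tau, h)$ is locally a $C^1$ diffeomorphism near $z^*(\tau, h)$. Define $Z(\eta, \tau, h)$ implicitly by $w(Z(\eta, \tau, h), \tau, h) = \eta$ for small $\eta \geq 0$, so that $Z(0, \tau, h) = z^*(\tau, h)$. Rewriting the PDE $w_\tau - \TT w + f = 0$ in the variables $(\eta, \tau)$ produces a quasilinear parabolic equation for $Z$ posed on the fixed half-strip $\{\eta \geq 0\}$, with coefficients smooth in their arguments (as $f$ is smooth for $z < \ln U'(h)$ and $Z_\eta = 1/w_z > 0$ there). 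Standard parabolic Schauder theory applied to this fixed-domain problem, together with successive bootstrapping, gives $Z \in C^\infty$ up to $\eta = 0$, and hence $z^*(\cdot, h) = Z(0, \cdot, h) \in C^\infty(\R^+)$.

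The main obstacle is the $C^{1,1}$ matching that underpins continuity of $w_\tau$: the regularity $w \in W^{2,1}_{p, \mathrm{loc}}$ from \thmref{thm:w} gives only $w_{zz} \in L^p$ without pointwise limits at the free boundary a priori. The required upgrade leans on the strict non-degeneracy $f(z^*(\tau, h), h) > 0$, the monotonicity $w_\tau \geq 0$ from \thmref{thm:w}, and the strict monotonicity of $z^*$ in $\tau$ from \propref{pro:y4}, together with classical free boundary regularity theory for parabolic obstacle problems. Once this matching is established, the hodograph reduction to a smooth fixed-domain problem makes the $C^\infty$ conclusion a routine parabolic Schauder bootstrap.
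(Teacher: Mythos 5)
Your treatment of $w_\tau$-continuity is in the spirit of the paper, which simply cites Friedman's 1975 work on one-dimensional parabolic variational inequalities for both halves of the proposition; the strict non-degeneracy $f(z^*(\tau,h),h)>0$ that you extract from Propositions~\ref{pro:zup} and~\ref{pro:y3} is indeed the hypothesis that makes that classical theory applicable. (Your chain is slightly reversed from the usual order---one typically \emph{first} shows $w_\tau$ is continuous, via the monotonicity $w_\tau\ge0$ and a barrier argument, and \emph{then} reads off the pointwise limit of $w_{zz}$ from the equation, rather than the other way round---but this is a matter of presentation if you are appealing to the literature anyway.)

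The second half of your proposal, however, has a genuine error. You define the hodograph variable by $w(Z(\eta,\tau,h),\tau,h)=\eta$ and assert that $z\mapsto w(z,\tau,h)$ is a local $C^1$ diffeomorphism near $z^*(\tau,h)$. That cannot be: the $C^{1,1}$ matching you invoked one paragraph earlier forces $w_z(z^*(\tau,h)^+,\tau,h)=0$, since $w\ge0$ attains its minimum value $0$ with a continuous first derivative across $z^*$. Thus the Jacobian of your proposed change of variable vanishes precisely on the free boundary. Quantitatively, the non-degenerate matching $w_{zz}(z^{*+},\tau,h)=2f(z^*,h)/\kappa^2>0$ gives $w(z,\tau,h)\sim\kappa^{-2}f(z^*,h)(z-z^*)^2$ as $z\downarrow z^*$, so $Z(\eta,\tau,h)-Z(0,\tau,h)\sim c\sqrt{\eta}$ and the inverse map is only H\"older-$\tfrac12$ in $\eta$. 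The transformed equation for $Z$ has coefficients proportional to $1/w_z$, which blow up at $\eta=0$; the problem on $\{\eta\ge0\}$ is therefore degenerate, not a nice fixed-boundary parabolic problem, and Schauder bootstrap does not apply. (Relatedly, the Hopf-lemma step in Proposition~\ref{pro:y4} gives $w_{\tau z}>0$ at the free boundary, not $w_z>0$; $w_z$ is strictly positive only in the open non-coincidence region and tends to $0$ at $z^*$.) To make a hodograph/Legendre argument work here one should straighten via $p=w_z$ as the new independent variable, using the non-degeneracy $w_{zz}(z^{*+})>0$ so that $z\mapsto w_z$ \emph{is} a local diffeomorphism mapping the free boundary to $\{p=0\}$; alternatively, follow Friedman's direct difference-quotient argument, which is what the paper's citation points to. As written, the $C^\infty$ conclusion does not follow from your argument.
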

{
\begin{proof}
Since $w_\tau \geq0$, $w\geq0$, and $f(\cdot,h)\in C^\infty$ near $z=z^*(\tau, h)$, we can take advantage of the same arguments in \cite{Fr75} to prove $w_\tau(\cdot,\cdot,h) $ is continuous across $z=z^*(\tau, h)$  and $z^*(\cdot, h)\in C^{\infty}(\R^+)$.
\end{proof}
}

\begin{proposition}\label{pro:y6}
The free boundary \( z^*(\tau,h) \) is strictly decreasing with respect to \( h \).
\end{proposition}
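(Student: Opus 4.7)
The plan is to proceed by contradiction, exploiting the quadratic behavior of $w$ at the free boundary combined with the strict concavity of $U$.

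Suppose there exist $\tau_0>0$ and $h_1<h_2$ with $z^*(\tau_0,h_1)=z^*(\tau_0,h_2)=:z_0$. Since $w_h\geq 0$ by Theorem~\ref{thm:w}, $z^*(\tau_0,\cdot)$ is non-increasing, which forces $z^*(\tau_0,h)=z_0$ for every $h\in[h_1,h_2]$. For any such $h$, the following boundary data hold at $z=z_0$: $w(z_0,\tau_0,h)=0$ by definition of $z^*$; $w_z(z_0,\tau_0,h)=0$ because $w\equiv 0$ on $(-\infty,z_0]$ and $w$ is $C^1$ in $z$ (Theorem~\ref{thm:w}); and $w_\tau(z_0,\tau_0,h)=0$ because $z^*(\tau,h)>z_0$ for $\tau<\tau_0$ by Proposition~\ref{pro:y4}, so $w(z_0,\tau,h)\equiv 0$ for $\tau\leq\tau_0$, and $w_\tau$ is continuous across the free boundary by Proposition~\ref{pro:y5}. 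Substituting these into the PDE $w_\tau-\TT w+f=0$ at $z\to z_0+$ yields
\[
\tfrac{\kappa^2}{2}\,w_{zz}(z_0+,\tau_0,h)=f(z_0,h),
\]
and integrating twice gives the one-sided Taylor expansion
\[
w(z,\tau_0,h)=\frac{f(z_0,h)}{\kappa^2}(z-z_0)^2+o\bigl((z-z_0)^2\bigr),\qquad z\to z_0+.
\]

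By Propositions~\ref{pro:y3} and~\ref{pro:y4}, $z_0=z^*(\tau_0,h_2)<\lim_{\tau\to 0+}z^*(\tau,h_2)=\ln U'(h_2)\leq \ln U'(h)$ for every $h\in[h_1,h_2]$, so $e^{z_0}<U'(h)$ and $f(z_0,h)=U'(h)-e^{z_0}$. Fix $h_0\in(h_1,h_2)$ and a small $\delta>0$ with $h_0+\delta\leq h_2$. The monotonicity $w_h\geq 0$ yields $\tilde w(z):=w(z,\tau_0,h_0+\delta)-w(z,\tau_0,h_0)\geq 0$, while the expansion above gives
\[
\tilde w(z)=\frac{U'(h_0+\delta)-U'(h_0)}{\kappa^2}(z-z_0)^2+o\bigl((z-z_0)^2\bigr),
\]
which is strictly negative for $z$ slightly greater than $z_0$ since $U'(h_0+\delta)<U'(h_0)$ by $U''<0$. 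This contradiction proves the strict monotonicity of $z^*(\tau,h)$ in $h$.

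The main obstacle is justifying the pointwise Taylor expansion, which requires $w_{zz}(\cdot,\tau_0,h)$ to extend continuously up to $z=z_0$ from the right with the claimed value $2f(z_0,h)/\kappa^2$. This follows from classical one-phase parabolic obstacle regularity: on the open set $\{z>z^*(\tau,h)\}$ the equation $-\TT w=-w_\tau-f(\cdot,h)$ has a H\"older continuous right-hand side (by Theorem~\ref{thm:w} and Proposition~\ref{pro:y5}), the free boundary $z=z^*(\tau,h)$ is $C^\infty$ by Proposition~\ref{pro:y5}, and Schauder estimates up to a smooth boundary extend $w_{zz}$ continuously across $z=z^*$ from the positive side.
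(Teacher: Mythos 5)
Your argument is correct and takes a genuinely different route from the paper's. The paper works with the difference $\psi := w(\cdot,\cdot,h_2) - w(\cdot,\cdot,h_1)$, notes that in $\{z\geq z^*(\tau,h_1)\}$ it is a nonnegative supersolution ($\psi_\tau - \TT\psi = f(z,h_1)-f(z,h_2)\geq 0$ since $f$ is non-increasing in $h$) that vanishes at the boundary point $(z_0,\tau_0)$, and invokes the parabolic Hopf lemma on the $C^\infty$ free boundary $z^*(\cdot,h_1)$ to get $\psi_z(z_0,\tau_0)>0$, contradicting $w_z(z_0,\tau_0,h_1)=w_z(z_0,\tau_0,h_2)=0$. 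You instead read off the quadratic detachment rate $w(z,\tau_0,h) = \tfrac{f(z_0,h)}{\kappa^2}(z-z_0)^2 + o((z-z_0)^2)$ at the free boundary directly from the PDE (a standard hallmark of obstacle problems), and then the strict decrease of $U'$ makes the leading coefficient strictly decrease in $h$, contradicting $w_h\geq 0$. Both proofs turn on the same two ingredients — $w_h\geq 0$ and $U''<0$ — and both rely on the $C^\infty$ smoothness of $z^*(\cdot,h)$ from Proposition~\ref{pro:y5}. The Hopf-lemma route only needs first-order boundary regularity of $w$ plus a strong-maximum-principle check that $\psi\not\equiv 0$, whereas your route needs second-order regularity of $w$ up to the free boundary. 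That extra regularity is true, but your stated justification is slightly off: Proposition~\ref{pro:y5} gives continuity of $w_\tau$, not H\"older continuity, so you cannot simply treat $-\TT w = -w_\tau - f$ as an elliptic equation with H\"older data. The correct way is to apply parabolic Schauder estimates up to the $C^\infty$ boundary to $w_\tau - \TT w = -f$, $w|_{z=z^*}=0$, which yields $w\in C^{2+\alpha,1+\alpha/2}$ on $\{z\geq z^*(\tau,h)\}$ near $(z_0,\tau_0)$ and hence the continuity of $w_{zz}$ from the right that your Taylor expansion needs. With that one repair, the proof is complete.
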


\begin{proof}
Suppose there exists \( h \in [h_1, h_2] \) such that \( z^*(\tau_0, h) = z_0 \) for all \( h \in [h_1, h_2] \). Then,
\[
w_z(z_0,\tau_0,h_1) = w_z(z_0,\tau_0,h_2) = 0.
\]
Let \( \psi(z,\tau) := w(z,\tau,h_2) - w(z,\tau,h_1) \).
{
Since $z^*(\tau,h_1)\geq z^*(\tau,h_2)$, the equation of $w$ gives
\[
\psi_\tau (z,\tau)-\TT \psi (z,\tau)=f(z,h_1)-f(z,h_2)\geq 0, \quad z\geq z^*(\tau,h_1).
\]
Since $\psi(z,\tau)\geq 0$ for $z\geq z^*(\tau,h_1)$ and $\psi(z_0,\tau_0)= 0$, and noting that $z^*(\cdot,h_1)\in C^\infty(\R^+)$, we adopt the Hopf Lemma to get $\psi_z(z_0,\tau_0)>0$, contradicting the previous conclusion. Therefore, \( z^*(\tau,h) \) is strictly decreasing in \( h \).
}
\end{proof}

To construct the solution to Problem~\eqref{u_pb}, we define
\[
h^*(z,\tau) := \sup \{ h \geq 0 \mid w(z,\tau,h) = 0 \}, \quad (z,\tau) \in \mathbb{R} \times (0,T].
\]
Then
\begin{align}\label{h*1}
h^*(z,\tau) =
\begin{cases}
(z^*(\tau, \cdot))^{-1}(z), & \text{if } z < z^*(\tau,0+), \\
0, & \text{if } z \geq z^*(\tau,0+).
\end{cases}
\end{align}

{ Since $z^*(\cdot,\cdot)$ is continuous and non increasing with respect to $\tau$ and $h$, we have}
\begin{proposition}\label{pro:y7}
The function \( h^*(z,\tau) \) is continuous and non-increasing in both \( z \) and \( \tau \).
\end{proposition}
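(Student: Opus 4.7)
The plan is to derive both properties directly from the fact that $h^*(\cdot,\tau)$ is, modulo a cut-off at $z^*(\tau,0+)$, the inverse of the map $h \mapsto z^*(\tau,h)$, and then apply the continuity and strict monotonicity of $z^*$ established in Propositions~\ref{pro:y3}, \ref{pro:y4}, and \ref{pro:y6}. No new PDE analysis is needed; the statement is purely a consequence of the regularity of the already constructed free boundary.

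Monotonicity should follow almost immediately. Fix $\tau > 0$. Since $z^*(\tau,\cdot)$ is continuous and strictly decreasing, its inverse is strictly decreasing on the image $(-\infty, z^*(\tau,0+))$; extending by $0$ on $[z^*(\tau,0+),+\infty)$ as in \eqref{h*1} preserves the non-increase, giving monotonicity in $z$. For monotonicity in $\tau$, fix $z$ and take $\tau_1 < \tau_2$; when $h^*(z,\tau_2) > 0$ we have $z = z^*(\tau_2, h^*(z,\tau_2))$, and strict monotonicity of $z^*$ in $\tau$ (Proposition~\ref{pro:y4}) yields $z^*(\tau_1, h^*(z,\tau_2)) > z$, so strict monotonicity of $z^*(\tau_1,\cdot)$ in $h$ forces $h^*(z,\tau_1) > h^*(z,\tau_2)$; when $h^*(z,\tau_2) = 0$ there is nothing to prove.

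For continuity at an arbitrary point $(z_0,\tau_0)$, take $(z_n,\tau_n) \to (z_0,\tau_0)$ and set $h_n := h^*(z_n,\tau_n)$, $h_0 := h^*(z_0,\tau_0)$. I plan to split into two cases. If $h_0 > 0$, then $z_0 = z^*(\tau_0, h_0)$, and for any $\delta \in (0,h_0)$ strict monotonicity in $h$ gives $z^*(\tau_0, h_0-\delta) > z_0 > z^*(\tau_0, h_0+\delta)$; joint continuity of $z^*$ transfers these strict inequalities to the approximating points, yielding $z^*(\tau_n, h_0-\delta) > z_n > z^*(\tau_n, h_0+\delta)$ for $n$ large, so that $h_0 - \delta < h_n < h_0 + \delta$. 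If $h_0 = 0$, then $z_0 \geq z^*(\tau_0, 0+)$, and for any $\delta > 0$ strict monotonicity gives $z^*(\tau_0,\delta) < z^*(\tau_0, 0+) \leq z_0$; joint continuity then yields $z^*(\tau_n,\delta) < z_n$ eventually, forcing either $h_n = 0$ or $h_n < \delta$ via strict monotonicity of $z^*(\tau_n,\cdot)$.

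The main subtlety I anticipate is the boundary case $h_0 = 0$ with $z_0 = z^*(\tau_0,0+)$, where the approximating sequence may freely oscillate between the two branches of the piecewise definition \eqref{h*1}. The squeeze above is designed precisely to avoid analyzing the boundary value $z^*(\tau,0+)$ as a function of $\tau$: it uses only strict monotonicity of $z^*(\tau_0,\cdot)$ at an arbitrary interior $\delta > 0$ together with the already established joint continuity of $z^*$ on $\mathbb{R}^+ \times \mathbb{R}^+$, which is enough to conclude in both branches.
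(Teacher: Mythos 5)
Your proof is correct and follows essentially the same route the paper takes: the paper states the result as an immediate consequence of the continuity and monotonicity of $z^*$ (Propositions~\ref{pro:y3}, \ref{pro:y4}, \ref{pro:y6}) together with the piecewise-inverse definition \eqref{h*1}, without spelling out the argument. Your write-up simply supplies the $\varepsilon$--$\delta$ details (squeeze via $z^*(\tau_n, h_0\pm\delta)$ in the interior case and via $z^*(\tau_n,\delta)$ near $h_0 = 0$) that the paper leaves implicit, and in passing notes the slightly stronger fact of strict decrease in $\tau$ on the region where $h^* > 0$.
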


\section{The Solution and the Free Boundary to the Dual Problem}\label{sec:E1}
\setcounter{equation}{0}

In this section, we aim to construct the solution to the dual problem \eqref{v_pb}, or its equivalent form \eqref{u_pb}, by utilizing the solution to the obstacle problem \eqref{w_pb}. Since $w = -u_h$, it is natural to obtain $u$ by integrating $w$ with respect to $h$. However, it is not feasible to identify a specific point $h_0$ to determine $u(\cdot,\cdot,h_0)$ or to evaluate $u(\cdot,\cdot,+\infty)$ as the initial value of the integration.

To overcome this difficulty, we introduce a new model that enhances the original by imposing a constraint on the consumption rate $C_t$ (or, equivalently, the maximum consumption rate $H_t$), restricting it to a bounded interval $(h,\hh]$. Under this restriction, when $H_t$ reaches the upper bound $\hh$, it remains fixed at this level. The dual problem reduces to a linear two-dimensional problem, whose solution can be determined by solving a linear PDE. This solution can then serve as the initial value for the abovementioned integration to solve this new problem. Then, by taking the limit $\hh \to \infty$, we eventually obtain the solution to the original problem \eqref{v_pb}, which imposes no upper bound on $h$.

We now turn our attention to the following linear problem for the function $\fh(z,\tau)$:
\begin{align}\label{fh_pb}
\left\{\begin{array}{ll}
\p_\tau \fh - \TT \fh - \wU (e^z, \hh) = 0, & (z,\tau) \in \R \times (0,T], \\
\fh(z,0) = \wwU_T (e^z), & z \in \R.
\end{array}\right.
\end{align}

According to standard PDE theory, this problem admits a unique solution $\fh \in C^{2,1}(\R \times [0,T])$ under the exponential growth condition. The solution to Problem \eqref{fh_pb} corresponds to an optimal control problem where the control variable $h$ is fixed at the constant level $\hh$.

Next, we consider the following approximation problem for \eqref{u_pb}:
\begin{align}\label{uh_pb}
\left\{\begin{array}{ll}
\min\Big\{\p_\tau \uh - \TT \uh - \wU(e^z,h),\;  -\p_h\uh\Big\} = 0, & (z,\tau,h) \in \R \times (0,T] \times (0,\hh), \\
\uh(z,0,h) = \wwU_T(e^z), & z \in \R,\; h \in (0,\hh), \\
\uh(z,\tau,\hh) = \fh(z,\tau), & (z,\tau) \in \R \times (0,T].
\end{array}\right.
\end{align}
The solution to Problem \eqref{uh_pb} represents an optimal control scenario where the control variable $h$ is restricted to the bounded interval $(0,\hh]$.

With solutions to Problems \eqref{fh_pb} and \eqref{w_pb} in hand, we are now in a position to construct the solution to Problem \eqref{uh_pb}. Suppose $\fh$ solves \eqref{fh_pb} and $w$ solves \eqref{w_pb}. Define
\begin{align}\label{uw}
\uh(z,\tau,h) := \fh(z,\tau) + \int_h^\hh w(z,\tau,s) \, \mathrm{d}s.
\end{align}

According to Theorem~\ref{thm:w} and Proposition~\ref{pro:y5}, we know that $w_z(\cdot,\cdot,h),\; w_\tau(\cdot,\cdot,h) \in C(\R \times (0,T])$ for every $h \in (0,\hh]$, and $w \in C(\R \times (0,T] \times \R^+)$. Moreover, we can easily find super-solutions for the PDEs involving $w_z$ and $w_\tau$ that are independent of $h$. As a result, we can apply the Lebesgue dominated convergence theorem to deduce that $\uh \in C^{1,1,1}(\R \times [0,T] \times (0,\hh])$. Additionally, we have the following result:

\begin{lemma}\label{lem:TuhR}
The function $\uh$ defined in \eqref{uw} belongs to $C^{2,1,1}(\R \times [0,T] \times (0,\hh])$ and solves Problem \eqref{uh_pb}.
\end{lemma}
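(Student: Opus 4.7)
The plan is to verify the initial and boundary conditions first, then upgrade the already-established $C^{1,1,1}$ regularity to $C^{2,1,1}$, and finally check the variational inequality by a direct computation built on the PDE for $\fh$ and the obstacle structure of $w$. The initial and boundary conditions are immediate: since $w(\cdot,\cdot,0)\equiv 0$ from \eqref{w_pb}, we get $\uh(z,0,h)=\fh(z,0)=\wwU_T(e^z)$, and the integral is empty at $h=\hh$, so $\uh(z,\tau,\hh)=\fh(z,\tau)$.

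For the regularity, only the existence and continuity of $\uh_{zz}$ remain to be shown. Classical Schauder theory gives $\fh\in C^{2,1}$ since the source $\wU(e^z,\hh)$ is $C^1$ in $z$. Formally,
\[
\uh_{zz}(z,\tau,h)=\fh_{zz}(z,\tau)+\int_h^\hh w_{zz}(z,\tau,s)\,\mathrm{d}s,
\]
and the interchange is justified by Fubini together with the absolute continuity of $w_z$ in $z$ inherited from $w\in W^{2,1}_{p,\mathrm{loc}}$. To see that the integral is continuous, I would split at the continuous threshold $\max\{h,h^*(z,\tau)\}$: for $s<h^*(z,\tau)$ the point $(z,\tau)$ lies in the interior of $\{w(\cdot,\cdot,s)=0\}$, hence $w_{zz}=0$ a.e.\ there; for $s>h^*(z,\tau)$ the equation in \eqref{w_pb} gives
\[
w_{zz}(z,\tau,s)=\frac{2}{\kappa^2}\Big[w_\tau-\big(\rho-r-\tfrac12\kappa^2\big)w_z+\rho w+f(z,s)\Big],
\]
whose right-hand side is jointly continuous in $(z,\tau,s)$ by Theorem~\ref{thm:w} and Proposition~\ref{pro:y5}. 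Combined with the continuity of $h^*$ from Proposition~\ref{pro:y7}, this produces a continuous function of $(z,\tau,h)$.

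For the variational inequality, $-\uh_h=w\ge 0$ is immediate from \eqref{w1}. For the parabolic part, noting that $f(z,s)=\p_s\wU(e^z,s)$ and using the PDEs for $\fh$ and $w$ (the latter valid on $\{w>0\}$, with all $(z,\tau)$-derivatives vanishing on the interior of $\{w=0\}$), I would compute
\[
\uh_\tau-\TT\uh=\wU(e^z,\hh)-\int_{\max\{h,h^*(z,\tau)\}}^\hh f(z,s)\,\mathrm{d}s=\wU\big(e^z,\max\{h,h^*(z,\tau)\}\big).
\]
When $h\ge h^*(z,\tau)$ this equals $\wU(e^z,h)$, so the first argument of the $\min$ vanishes. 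When $h<h^*(z,\tau)$, the monotonicity $w_h\ge 0$ from \eqref{w4} gives $w(z,\tau,h)=0$, so $-\uh_h=0$; moreover the difference $\wU(e^z,h^*(z,\tau))-\wU(e^z,h)=\int_h^{h^*(z,\tau)}f(z,s)\,\mathrm{d}s$ is non-negative since on that interval $w(z,\tau,s)=0$ forces $z\le z^*(\tau,s)\le\ln U'(s)$ by Proposition~\ref{pro:zup}, and thus $f(z,s)=U'(s)-e^z\ge 0$. Hence the $\min$ vanishes in both cases.

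The main technical obstacle is the regularity upgrade itself: $w_{zz}$ is only $L^p_{\mathrm{loc}}$ and jumps across the free boundary $\{z=z^*(\tau,s)\}$, so the continuity of $\uh_{zz}$ must be recovered after integration in $s$, which relies on the continuity of the moving lower limit $h^*(z,\tau)$ together with the vanishing of $w$, $w_z$, and $w_\tau$ at the free boundary, properties inherited from the $C^{1,\alpha}$ and parabolic regularity results established in Section~\ref{sec:E}.
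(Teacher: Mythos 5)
Your proposal is correct and follows essentially the same route as the paper: both establish the identity $\uh_\tau - \TT\uh = \wU(e^z, h^*(z,\tau)\vee h)$ from the PDEs for $\fh$ and $w$ and the obstacle structure, use the continuity of $h^*$ together with the pre-established $C^{1,1,1}$ regularity to upgrade to $C^{2,1,1}$, and then perform the same case split on $h \lessgtr h^*(z,\tau)$ (using $z^*(\tau,s) \le \ln U'(s)$ to get $f(z,s)\ge 0$ on the relevant interval) to verify the variational inequality. The only cosmetic difference is that you argue continuity of $\uh_{zz}$ directly from the split integral representation, while the paper recovers $\uh_{zz}$ by solving the equation for it once the right-hand side $\wU(e^z,h^*\vee h)$ is seen to be continuous — both reduce to the same observation.
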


\begin{proof}
The initial and boundary conditions in \eqref{uh_pb} are satisfied by construction. We now verify the variational inequality in \eqref{uh_pb}.

First, we note that $-\p_h \uh = w \geq 0$.

Second, observe that
\[
(w_\tau - \TT w)(z,\tau,h) =
\begin{cases}
-f(z,h) = -\p_h \wU(e^z,h), & h > h^*(z,\tau), \\
0, & h < h^*(z,\tau).
\end{cases}
\]
Applying \eqref{uw}, we obtain
\begin{align}
(\uh_\tau -\TT \uh)(z,\tau,h) =&\; (\fh_\tau -\TT \fh)(z,\tau)+ \int_h^{\hh} (w_\tau -\TT w)(z,\tau,s) \d s \nonumber\\ \label{Luh}
= &\; \wU(e^z,\hh) + \int_{h^*(z,\tau) \vee h}^{\hh} -\p_h\wU(e^z,s) \d s = \wU (e^z, h^*(z,\tau)\vee h).
\end{align}
Since $h^* \in C(\R \times (0,T])$ and $\uh \in C^{1,1,1}(\R \times (0,T] \times (0,\hh])$, we conclude that $\uh \in C^{2,1,1}(\R \times (0,T] \times (0,\hh])$.

If $-\p_h \uh(z,\tau,h) = w(z,\tau,h) > 0$, i.e., $h > h^*(z,\tau)$, then
\[
(\uh_\tau - \TT \uh)(z,\tau,h) = \wU(e^z,h).
\]
Conversely, if $-\p_h \uh(z,\tau,h) = w(z,\tau,h) = 0$, then $h \leq h^*(z,\tau)$, or equivalently, $z \leq z^*(\tau,h) \leq \ln U'(h)$. Since $\wU(e^z,h)$ is non-decreasing with respect to $h$ when $z \leq \ln U'(h)$, it follows that
\[
(\uh_\tau - \TT \uh)(z,\tau,h) = \wU(e^z,h^*(z,\tau)) \geq \wU(e^z,h).
\]
Therefore, the variational inequality in \eqref{uh_pb} is satisfied.
\end{proof}

In order to obtain the solution of \eqref{u_pb} by taking the limit of $\uh$ for $\hh\to +\infty$, we must find an upper bound of $\uh$ that is independent of $\hh$.
Define
\begin{align}\label{wwU}
\wwU(y)=\sup\limits_{c>0} \Big(U(c)-c y\Big)
\end{align}
and assume that $\uu(z,\tau)$ is the unique $C^{2,1}(\R \times[0,T])$ solution of
\begin{align}\label{uu_pb}
\left\{\begin{array}{ll}
\p_\tau \uu - \TT \uu-\wwU (e^z)=0,\quad (z,\tau)\in \R \times(0,T],\\
\uh(z,0)= \wwU_T (e^z),\quad z\in \R.
\end{array}\right.
\end{align}
under the exponential growth condition.
This problem corresponds to the model where there is no drawdown constraint on consumption.
Since
\begin{align}\label{wwUwU}
\wwU(y)\geq \wU (y,\hh),
\end{align}
by the comparison principle we have
\begin{align}\label{uhuh}
\uu \geq \fh.
\end{align}
Moreover, we have
\begin{lemma}\label{lem:uu}
For any $\hh\in\R^+$, there exists
\begin{align}\label{uu}
\uh(z,\tau,h) \leq \uu(z,\tau),\quad (z,\tau,h)\in \R\times[0,T]\times(0,\hh].
\end{align}
\end{lemma}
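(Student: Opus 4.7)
\textbf{Proof proposal for Lemma \ref{lem:uu}.}

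The plan is to regard $\uu(z,\tau)$ as a function on $\R\times[0,T]\times(0,\hh]$ that is constant in $h$, show that it is a supersolution of the variational inequality satisfied by $\uh$ in \eqref{uh_pb}, and then invoke a comparison principle to deduce $\uu\geq \uh$. Set $\phi(z,\tau,h):=\uu(z,\tau)$. The two inequalities that $\phi$ must satisfy to be a supersolution of \eqref{uh_pb} are $-\p_h\phi\geq 0$ and $\p_\tau\phi-\TT\phi-\wU(e^z,h)\geq 0$. The first is immediate since $\phi$ is independent of $h$. For the second, by \eqref{uu_pb} one has $\p_\tau\phi-\TT\phi=\wwU(e^z)$, and the definitions of $\wwU$ in \eqref{wwU} and of $\wU$ yield
\[
\wwU(e^z)=\sup_{c>0}(U(c)-c e^z)\;\geq\;\sup_{bh\leq c\leq h}(U(c)-c e^z)=\wU(e^z,h),
\]
which is exactly \eqref{wwUwU}. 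Hence $\p_\tau\phi-\TT\phi-\wU(e^z,h)\geq 0$, so $\min\{\p_\tau\phi-\TT\phi-\wU(e^z,h),\,-\p_h\phi\}\geq 0$ throughout $\R\times(0,T]\times(0,\hh]$.

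Next I would verify that $\phi$ dominates $\uh$ on the parabolic boundary. At $\tau=0$, $\phi(z,0,h)=\wwU_T(e^z)=\uh(z,0,h)$, so the initial values agree. At $h=\hh$, we need $\phi(z,\tau,\hh)=\uu(z,\tau)\geq \fh(z,\tau)=\uh(z,\tau,\hh)$; this is precisely \eqref{uhuh}, which follows from applying the comparison principle to \eqref{fh_pb} and \eqref{uu_pb} together with \eqref{wwUwU}. Thus $\phi\geq \uh$ holds on $\{\tau=0\}\cup\{h=\hh\}$.

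Finally, I would invoke a comparison principle for the obstacle problem \eqref{uh_pb} to conclude $\uh\leq\phi=\uu$ on the whole domain. The only subtlety is that the problem is posed on the unbounded set $\R\times[0,T]\times(0,\hh]$ in the $z$-variable, so an off-the-shelf maximum principle does not literally apply. The cleanest way to handle this is to repeat the construction that produced $\uh$ via the penalty-truncated problems: one writes the analogue of \eqref{uw} on the bounded $z$-slab $(-1/\ep,1/\ep)$ using the solutions of \eqref{we_pb}, compares with $\uu$ on that bounded domain (where the classical parabolic comparison principle for variational inequalities applies directly, aided by the exponential growth bounds on $\wwU(e^z)$, $\wU(e^z,h)$, $\wwU_T(e^z)$ and the a priori estimates \eqref{w1} that are uniform in $\hh$), and then passes to the limit $\ep\to 0$ using the convergence that already underlies Theorem \ref{thm:w} and Lemma \ref{lem:TuhR}. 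The main obstacle in executing the plan is precisely this passage: one must ensure that the growth estimates on $\uh$ and $\uu$ at $z\to\pm\infty$ are strong enough to rule out a maximum of $\uh-\uu$ escaping to infinity, which is why the uniform bound \eqref{w1} and the exponential growth hypothesis placed on $\fh$ and $\uu$ in \eqref{fh_pb} and \eqref{uu_pb} are essential.
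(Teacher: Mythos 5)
Your core observations match the paper's exactly: $\uu$ is independent of $h$, so $-\p_h\uu=0\geq 0$; $\p_\tau\uu-\TT\uu-\wU(e^z,h)=\wwU(e^z)-\wU(e^z,h)\geq 0$ by \eqref{wwUwU}; the initial values coincide; and the lateral boundary $h=\hh$ is handled by \eqref{uhuh}. These are precisely the ingredients the paper relies on. Where you and the paper part ways is in the final step. The paper does not re-truncate in $z$ via the penalty problems. Instead it introduces an explicit exponential barrier $\zeta(z,\tau)=e^{\la z+\eta\tau}+e^{-\la z+\eta\tau}$ (with $\la,\eta$ chosen so that $\zeta$ dominates $|\uh|+|\uu|$ at $z\to\pm\infty$ and $\zeta_\tau-\TT\zeta\geq 0$), then proves $\uh\leq \uu+\ep\zeta$ by contradiction at a would-be interior maximum $(z_0,\tau_0,h_0)$ of $\uh-\uu-\ep\zeta$. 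At that point the standard first- and second-order conditions give $\p_\tau(\uh-\uu-\ep\zeta)-\TT(\uh-\uu-\ep\zeta)\geq\rho M>0$; and, crucially, taking $h_0$ as the largest maximizer forces $w=-\p_h\uh>0$ for $h>h_0$, so the PDE branch of \eqref{uh_pb} is active at $(z_0,\tau_0,h_0)$, which together with $\wU\leq\wwU$ and the supersolution property of $\zeta$ makes the same quantity $\leq 0$. Sending $\ep\to0$ then $\underline{h}\to0$ finishes. That last structural step — identifying which branch of the obstacle problem is active at the contact point by exploiting $\p_h\uh=-w$ — is precisely what you are asking a generic ``comparison principle for the obstacle problem'' to supply, and it is the piece of your plan that would need to be made explicit. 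Your proposed route through the penalty-truncated problems on $(-1/\ep,1/\ep)$ is plausible in principle, but it would force you to reconstruct an approximating family for $\uh$ (not merely for $w$), re-verify the boundary comparisons there, and track the double limit; the paper's barrier-plus-contact-point argument avoids all of that and is more economical.
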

\begin{proof}
For any $\underline{h}\in(0,\hh)$, define
$$\zeta(z,\tau)=e^{\la z+ \eta \tau}+e^{-\la z+ \eta \tau},\quad (z,\tau)\in \R \times(0,T],$$
where $\la>0$ and $\eta>0$ are chosen to be large enough such that
$$
\lim\limits_{z\to \pm\infty}\max\limits_{(\tau,h)\in [0,T]\times [\underline{h},\hh]}\frac{|\uh(z,\tau,h)|+|\uu(z,\tau)|}{\zeta(z,\tau)}=0
$$
and
\begin{align}\label{Lphi}
\zeta_\tau -\TT \zeta \geq  0,\quad (z,\tau)\in \R \times(0,T].
\end{align}
We come to prove that for any $\ep>0$,
\begin{align}\label{v<=w}
\uh(z,\tau,h) \leq \uu(z,\tau)+\ep \zeta,\quad (z,\tau)\in \R \times(0,T],\;h\in [\underline{h},\hh].
\end{align}
We prove this by contradiction.  Suppose that \eqref{v<=w} is not true, let
$$M=\sup\limits_{(z,\tau,h)\in \R\times[0,T]\times [\underline{h},\hh]}(\uh-\uu-\ep \zeta)(z,\tau,h),$$
then $M>0$.
Note that $\uh-\uu-\ep \zeta$ is continuous and tends to $-\infty$ when $z\to\pm\infty$, and $(\uh-\uu-\ep\zeta)(z,0,h)=-\ep \zeta(z,0)<0$,
moreover, by \eqref{uhuh}, we further have
$(\uh-\uu-\ep\zeta)(z,\tau,\hh)\leq -\ep \zeta(z,\tau)<0$.
So $\uh-\uu-\ep\zeta$ attains its maximum value at a point $(z_0,\tau_0,h_0)\in \R\times(0,T]\times[\underline{h},\hh)$,
thus
$$
\p_\tau (\uh-\uu-\ep\zeta)(z_0,\tau_0,h_0)\geq 0,\quad \p_z (\uh-\uu-\ep\zeta)(z_0,\tau_0,h_0) = 0,\quad \p_{zz} (\uh-\uu-\ep\zeta)(z_0,\tau_0,h_0) \leq 0.
$$
then we get
\begin{align}\label{Luu>0}
\Big(\p_\tau (\uh-\uu-\ep\zeta)-\TT (\uh-\uu-\ep\zeta)\Big)(z_0,\tau_0,h_0) \geq \rho M >0.
\end{align}
On the other hand, we assume that $h_0$ satisfies
\begin{align*}
(\uh-\uu-\ep\zeta)(z_0,\tau_0,h)<M, \quad\forall\; h\in (h_0,\hh],
\end{align*}
which implies $w(z_0,\tau_0,h)>0$ for all $h \in (h_0,\hh]$, or equivalently, $z_0 \geq z^*(\tau_0,h_0)$. This  implies
$$\Big(\p_\tau \uh-\TT \uh\Big)(z_0,\tau_0,h_0) = \wU(e^{z_0},h_0).$$
Combining \eqref{uu_pb}, \eqref{Lphi} and \eqref{wwUwU} we get
$$
\Big(\p_\tau (\uh-\uu-\ep\zeta)-\TT (\uh-\uu-\ep\zeta)\Big)(z_0,\tau_0,h_0) \leq \wU(e^{z_0},h_0) - \wwU(e^{z_0})\leq 0,
$$
which contradicts \eqref{Luu>0}. So \eqref{v<=w} holds.
Sending $\ep\to0$ and then $\underline{h}\to0$ in \eqref{v<=w} we prove \eqref{uu}.
\end{proof}

By applying a similar argument, we can establish the following lemma:
\begin{lemma}\label{lem:uu}
For $\hh < \hh'$, we have
\begin{align*}
\uh(z,\tau,h) \leq u^{\hh'}(z,\tau,h), \quad (z,\tau,h)\in \R \times [0,T] \times (0,\hh].
\end{align*}
\end{lemma}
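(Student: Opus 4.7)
The plan hinges on a dimensional collapse: by the representation \eqref{uw}, for every $h\in(0,\hh]$ the difference
\begin{align*}
u^{\hh'}(z,\tau,h) - \uh(z,\tau,h)
&= \left[\varphi^{\hh'}(z,\tau) + \int_h^{\hh'} w(z,\tau,s)\,\ds\right] - \left[\fh(z,\tau) + \int_h^{\hh} w(z,\tau,s)\,\ds\right] \\
&= u^{\hh'}(z,\tau,\hh) - \fh(z,\tau)
\end{align*}
is independent of $h$. The entire statement therefore reduces to a single boundary inequality,
\begin{align*}
u^{\hh'}(z,\tau,\hh) \geq \fh(z,\tau) \qquad \text{on } \R\times[0,T].
\end{align*}

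Set $g(z,\tau) := u^{\hh'}(z,\tau,\hh) - \fh(z,\tau)$. By the identity \eqref{Luh} (applied with outer cap $\hh'$ at interior point $h=\hh$),
\begin{align*}
\bigl(\p_\tau u^{\hh'} - \TT u^{\hh'}\bigr)(z,\tau,\hh) = \wU\bigl(e^z,\; h^*(z,\tau)\vee\hh\bigr),
\end{align*}
whereas $\fh$ solves \eqref{fh_pb} with source $\wU(e^z,\hh)$. The crux of the argument is the pointwise monotonicity
\begin{align*}
\wU\bigl(e^z,\; h^*(z,\tau)\vee\hh\bigr) \geq \wU(e^z,\hh).
\end{align*}
This is trivial if $h^*(z,\tau)\leq\hh$; when $h^*(z,\tau)>\hh$, every $s\in[\hh,h^*(z,\tau)]$ satisfies $s\leq h^*(z,\tau)$, i.e.\ $z\leq z^*(\tau,s)$, and Proposition~\ref{pro:zup} forces $e^z\leq U'(s)$. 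Consequently $\p_s\wU(e^z,s) = U'(s)-e^z\geq 0$ throughout the interval, giving the claim. Hence $(\p_\tau-\TT)g \geq 0$ with $g(z,0)=0$.

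Both $u^{\hh'}(\cdot,\cdot,\hh)$ and $\fh$ have sub-exponential growth, since each is sandwiched between the lower envelope implied by \eqref{U3} and the supersolution $\uu$ of \eqref{uu_pb} furnished by the preceding lemma. A linear maximum-principle argument with the exponential barrier $\zeta(z,\tau) = e^{\la z+\eta\tau}+e^{-\la z+\eta\tau}$ from the preceding proof, now applied to $-g-\varepsilon\zeta$, runs as follows: were $M := \sup(-g-\varepsilon\zeta) > 0$, it would be attained at some interior $(z_0,\tau_0)\in\R\times(0,T]$, where the usual sign conditions on $\p_\tau,\p_z,\p_{zz}$ together with \eqref{Lphi} would force $(\p_\tau-\TT)(-g-\varepsilon\zeta)(z_0,\tau_0)\geq \rho M>0$, contradicting $(\p_\tau-\TT)(-g-\varepsilon\zeta)\leq 0$. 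Sending $\varepsilon\to 0$ concludes the proof.

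\textbf{Main obstacle.} The conceptual content is the monotonicity $\p_s\wU(e^z,s)\geq 0$ on $[\hh,h^*(z,\tau)]$; recognising that $z\leq z^*(\tau,s)\leq \ln U'(s)$ throughout this interval, via Proposition~\ref{pro:zup}, is what pins every such $s$ into the regime where $\wU$ is increasing in its second argument. Once this observation is in hand, the $h$-independence delivered by \eqref{uw} turns what might have been a delicate comparison of two variational inequalities into a purely linear maximum-principle problem, and the remaining analytical work is a straightforward replication of the barrier technique already developed in the preceding lemma.
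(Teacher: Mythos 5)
Your proof is correct and takes a genuinely cleaner route than the paper's implied one. The paper offers no explicit proof, deferring instead to ``a similar argument'' to the preceding Lemma~\ref{lem:uu}, which would mean running a three-variable barrier contradiction with $(z,\tau,h)$ ranging over $\R\times[0,T]\times[\underline{h},\hh]$; that argument would in any case have to dispatch the $h=\hh$ boundary slice by comparing $u^{\hh'}(\cdot,\cdot,\hh)$ with $\fh$, which is precisely your quantity $g$. Your key observation --- that the representation \eqref{uw} forces $u^{\hh'}(z,\tau,h)-\uh(z,\tau,h)$ to be $h$-independent, equal to $g(z,\tau)=u^{\hh'}(z,\tau,\hh)-\fh(z,\tau)$ --- collapses the whole problem to this single two-dimensional comparison, rendering the three-variable machinery (and its awkward sub-case on the $h$-location of the maximum) unnecessary. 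The source-term inequality $\wU(e^z, h^*(z,\tau)\vee\hh)\geq\wU(e^z,\hh)$ is then the only substantive point, and your justification via Proposition~\ref{pro:zup} --- that every $s\in[\hh,h^*(z,\tau)]$ satisfies $z\leq z^*(\tau,s)\leq\ln U'(s)$, hence $\p_s\wU(e^z,s)=U'(s)-e^z\geq0$ --- is exactly right. The closing barrier argument with $\zeta$ is a verbatim transplant of the technique already validated in the previous proof, so nothing new needs checking there. Net effect: you buy a genuine reduction in bookkeeping at no cost in rigor.
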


Since $\uh(z,\tau,h)$ is increasing in $\hh$ and uniformly bounded above for each $(z,\tau,h)\in \R \times (0,T] \times \R^+$, we define the limit function
\begin{align}\label{u_def}
u(z,\tau,h) := \lim_{\hh \to +\infty} \uh(z,\tau,h), \quad (z,\tau,h) \in \R \times [0,T] \times \R^+.
\end{align}

\begin{theorem}\label{u}
The function $u$ defined in \eqref{u_def} lies in $C^{2,1,1}(\R \times [0,T] \times \R^+)$ and solves Problem \eqref{u_pb}.
\end{theorem}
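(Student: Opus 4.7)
The plan is to exploit two key properties of the approximating family $\{\uh\}$: first, the monotonicity in $\hh$ together with the uniform upper bound $\uh \leq \uu$ yields a pointwise limit; second, the identity derived in the proof of Lemma \ref{lem:TuhR},
\[
(\uh_\tau - \TT \uh)(z,\tau,h) = \wU(e^z, h^*(z,\tau) \vee h),
\]
has a right-hand side that is independent of $\hh$ and jointly continuous in $(z,\tau,h)$, while the difference $\uh(z,\tau,h) - \uh(z,\tau,h_0) = \int_h^{h_0} w(z,\tau,s)\,\dd s$ is also $\hh$-free. These two features will let me transfer $h$-regularity (through the integral representation) and $(z,\tau)$-regularity (through interior parabolic estimates) to the limit $u$ separately, rather than attempting to commute a limit with the full variational inequality.

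First I would apply monotone convergence to define $u$ pointwise and conclude $u \leq \uu$. Passing to the limit in the difference identity yields
\[
u(z,\tau,h) = u(z,\tau,h_0) + \int_h^{h_0} w(z,\tau,s)\,\dd s,
\]
so $u_h = -w$ is continuous in $(z,\tau,h)$ by Theorem \ref{thm:w}. For the regularity in $(z,\tau)$, I would fix $h > 0$ and regard $\{\uh(\cdot,\cdot,h)\}_{\hh > h}$ as a uniformly $L^\infty$-bounded family of solutions to the \emph{same} linear parabolic equation with $\hh$-independent continuous source. Standard interior $W^{2,1}_p$ estimates (for any $p > 3$) together with Sobolev embedding let me extract a subsequence converging in $C^{1+\al,(1+\al)/2}_{\mathrm{loc}}(\R\times(0,T])$ and weakly in $W^{2,1}_{p,\mathrm{loc}}$; uniqueness of the pointwise limit forces this limit to coincide with $u(\cdot,\cdot,h)$. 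A bootstrap using the joint continuity of $\wU(e^z, h^*(z,\tau) \vee h)$ then upgrades the regularity to $u \in C^{2,1,1}(\R\times[0,T]\times\R^+)$.

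It remains to verify that $u$ solves the variational inequality in \eqref{u_pb}. The constraint $-u_h = w \geq 0$ is immediate. For the PDE term, $u_\tau - \TT u - \wU(e^z, h) = \wU(e^z, h^*(z,\tau) \vee h) - \wU(e^z, h)$ vanishes when $h \geq h^*(z,\tau)$; when $h < h^*(z,\tau)$, Proposition \ref{pro:zup} gives $z \leq z^*(\tau, h) \leq \ln U'(h)$, so $\partial_s \wU(e^z, s) = f(z,s) \geq 0$ on $[h, h^*(z,\tau)]$, yielding non-negativity. Complementarity follows because $w(z,\tau,h) > 0$ forces $h > h^*(z,\tau)$, in which case the PDE-side difference vanishes. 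The initial condition is preserved since $\uh(z,0,h) = \wwU_T(e^z)$ for every $\hh > h$. The principal technical hurdle is the interior regularity step in the second paragraph; its tractability rests crucially on the $\hh$-independence of the effective source term, which sidesteps the usual difficulty of passing limits through a variational inequality with moving coincidence set.
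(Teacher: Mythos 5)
Your proposal follows essentially the same route as the paper: define $u$ by monotone convergence, pass to the limit in the $\hh$-free source identity $(\uh_\tau - \TT \uh) = \wU(e^z, h^*(z,\tau)\vee h)$, obtain $h$-regularity from $u_h = -w$, get $(z,\tau)$-regularity by interior parabolic estimates, and check the variational inequality pointwise via the sign of $\p_s\wU(e^z,s)$ on $[h, h^*(z,\tau)]$.

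There is, however, one genuine weak point: the passage from $C^{1+\alpha,(1+\alpha)/2}_{\mathrm{loc}}$ to the asserted $C^{2,1,1}$, which you defer to an unspecified ``bootstrap.'' Parabolic Sobolev embedding from $W^{2,1}_{p}$ gives $u, u_z$ H\"older but leaves $u_\tau, u_{zz}$ only in $L^p$, and a merely continuous source does not by itself upgrade these to continuous functions. The paper closes this by (i) applying Schauder interior estimates at a fixed $h_0$ to get $u(\cdot,\cdot,h_0)\in C^{2,1}$, (ii) using the identities $\p_\tau u(\cdot,\cdot,h) = \p_\tau u(\cdot,\cdot,h_0) - \int_{h_0}^h w_\tau\,\ds$ and $\p_z u(\cdot,\cdot,h) = \p_z u(\cdot,\cdot,h_0) - \int_{h_0}^h w_z\,\ds$ together with the continuity and uniform $h$-bounds for $w_\tau, w_z$ (Proposition~\ref{pro:y5} and Theorem~\ref{thm:w}) to propagate joint continuity of $u_\tau, u_z$ to all $h$, and (iii) solving the PDE algebraically for $u_{zz}$. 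You should supply an analogous chain of steps, since as written the upgrade to second-order joint continuity is not justified. A second, smaller point worth acknowledging: the identity $(\uh_\tau-\TT\uh)=\wU(e^z,h^*\vee h)$ is $\hh$-free only on compact sets once $\hh$ is large enough that $z^*(\tau,\hh)$ lies below that set (which suffices for the limit, since $z^*(\tau,\hh)\leq \ln U'(\hh)\to -\infty$). Also, when you invoke Proposition~\ref{pro:zup} to conclude $f(z,s)\geq 0$ on the whole interval $[h,h^*(z,\tau)]$, you need $z \le z^*(\tau,s)\le \ln U'(s)$ for each such $s$ (which holds because $w(z,\tau,s)=0$ there), not merely $z\le \ln U'(h)$.
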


\begin{proof}
Letting $\hh \to \infty$ in \eqref{Luh}, we obtain
\begin{align}\label{TuR}
(u_\tau - \TT u)(z,\tau,h) = \wU(e^z, h^*(z,\tau) \vee h).
\end{align}
For any fixed $h_0 \in \R^+$, Schauder interior estimates (see \cite{Li96}, Theorem 4.9) imply $u(\cdot,\cdot,h_0) \in C^{2,1}(\R \times [0,T])$.

Furthermore, noting $u_h = -w$, we compute
\begin{align*}
\p_\tau u(z,\tau,h) &= \p_\tau u(z,\tau,h_0) - \int_{h_0}^h w_\tau(z,\tau,s) \, \d s, \\
\p_z u(z,\tau,h) &= \p_z u(z,\tau,h_0) - \int_{h_0}^h w_z(z,\tau,s) \, \d s.
\end{align*}
Since $w_z(\cdot,\cdot,h)$ and $w_\tau(\cdot,\cdot,h)$ are continuous in $\R \times (0,T]$ and uniformly bounded with respect to $h$, the Lebesgue dominated convergence theorem yields that $\p_\tau u$ and $\p_z u$ are continuous on $\R \times [0,T] \times \R^+$.
Moreover, combining \eqref{TuR}, we conclude that $u_{zz}$ is also continuous, hence $u \in C^{2,1,1}(\R \times [0,T] \times \R^+)$.

Finally, by the same arguments as in the proof of Lemma~\ref{lem:TuhR}, we verify that $u$ solves Problem \eqref{u_pb} using \eqref{TuR}.
\end{proof}

Define
\begin{align}\label{v_def}
v(y,\tau,h) := u(\ln y,\tau,h), \quad (y,\tau,h) \in \R^+ \times (0,T] \times \R^+.
\end{align}
Thanks to Lemma~\ref{u}, we obtain:
\begin{theorem}\label{v}
The function $v$ defined in \eqref{v_def} belongs to $C^{2,1,1}(\R^+ \times [0,T] \times \R^+)$ and is the solution to Problem \eqref{v_pb}.
\end{theorem}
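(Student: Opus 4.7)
The plan is to verify directly that the change of variables $z=\ln y$ maps the solution of Problem \eqref{u_pb} onto a solution of Problem \eqref{v_pb}. Since Theorem~\ref{u} already provides $u\in C^{2,1,1}(\R\times[0,T]\times\R^+)$ solving \eqref{u_pb}, essentially all the heavy lifting is done; the remaining task is a routine chain-rule computation plus a regularity check.

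First, I would observe that the map $y\mapsto \ln y$ is a $C^\infty$-diffeomorphism from $\R^+$ onto $\R$, with $\R^+$ having no boundary issues. Hence $v(y,\tau,h)=u(\ln y,\tau,h)$ inherits from $u$ the regularity $v\in C^{2,1,1}(\R^+\times[0,T]\times\R^+)$: the $\tau$ and $h$ derivatives pass through trivially, and the $y$ derivatives come from composing with a smooth invertible map.

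Second, I would compute the action of the operator $\LL$ on $v$. Using $v_\tau=u_\tau$, $v_h=u_h$, $y v_y = u_z$, and $y^2 v_{yy} = u_{zz}-u_z$, a direct substitution gives
\[
\LL v = \tfrac{1}{2}\kappa^2 y^2 v_{yy}+(\rho-r) y v_y - \rho v = \tfrac{1}{2}\kappa^2(u_{zz}-u_z)+(\rho-r)u_z - \rho u = \TT u,
\]
so that $v_\tau - \LL v - \wU(y,h) = u_\tau - \TT u - \wU(e^z,h)$ and $-v_h = -u_h$. Consequently the variational inequality in \eqref{v_pb} is pointwise equivalent to the one in \eqref{u_pb}, which $u$ satisfies by Theorem~\ref{u}.

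Third, I would verify the initial condition: $v(y,h,0)=u(\ln y,h,0)=\wwU_T(e^{\ln y})=\wwU_T(y)$, matching \eqref{v_pb}. There is no genuine obstacle here; the main care is simply ensuring the chain-rule identities are written correctly and that the logarithmic substitution indeed turns the constant-coefficient operator $\TT$ into the Euler-type operator $\LL$. This completes the construction of the dual value function $v$ promised at the outset of Section~\ref{sec:E1}.
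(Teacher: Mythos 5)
Your proof is correct and is essentially the same as the paper's: the paper simply states "Thanks to Lemma~\ref{u}, we obtain" Theorem~\ref{v}, leaving the chain-rule verification implicit, and your proposal spells out exactly that computation (the identities $y v_y = u_z$, $y^2 v_{yy} = u_{zz}-u_z$, hence $\LL v = \TT u$, together with the regularity transfer and the initial condition).
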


To construct the solution to the original problem \eqref{V_pb} via inverse dual transformation, we now analyze several properties of the dual solution, including convexity and the asymptotic behavior of its derivative with respect to $y$ as $y \to 0$ and $y \to \infty$.

Observe that
\[
v_{yy}(y,\tau,h) = e^{-2z} \left( u_{zz}(z,\tau,h) - u_z(z,\tau,h) \right).
\]
To establish the convexity of $v$ in $y$, we first show:
\begin{lemma}\label{lem:unzz}
The solution $\uh$ to Problem~\eqref{uh_pb} defined by \eqref{uw} satisfies
\begin{align}\label{uzzuz}
\p_{zz} \uh(z,\tau,h) - \p_z \uh(z,\tau,h) \geq 0.
\end{align}
\end{lemma}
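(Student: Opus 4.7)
The plan is to exploit the decomposition $\uh(z,\tau,h) = \fh(z,\tau) + \int_h^{\hh} w(z,\tau,s)\,\dd s$ from \eqref{uw}. Since the operator $\p_{zz} - \p_z$ commutes with integration in $s$, it suffices to prove
\[
\p_{zz}\fh - \p_z\fh \geq 0 \qquad\text{and}\qquad \p_{zz}w(\cdot,\cdot,s) - \p_z w(\cdot,\cdot,s) \geq 0
\]
separately. Under the substitution $y = e^z$, the operator $\p_{zz} - \p_z$ acts as $y^2\p_{yy}$, so each inequality expresses convexity of the corresponding function in the dual variable $y$---the structural property one expects from any Legendre-type transform.

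For $\fh$, the source $\wU(\cdot,\hh)$ and the initial datum $\wwU_T$ are, by construction, pointwise suprema of affine functions of $y$, and are therefore convex in $y$. Setting $\xi := \p_{zz}\fh - \p_z\fh$ and applying $\p_{zz} - \p_z$ to the linear equation \eqref{fh_pb} gives
\[
\xi_\tau - \TT\xi = e^{2z}\,\p_{yy}\wU(e^z,\hh) \geq 0,\qquad \xi(z,0) = e^{2z}\,\wwU_T''(e^z) \geq 0.
\]
A standard comparison argument in the class of functions with admissible exponential growth then yields $\xi \geq 0$.

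For $w$ I would work with the penalty approximation $w^\ep$ of \eqref{we_pb}. Setting $\chi^\ep := \p_{zz}w^\ep - \p_z w^\ep$ and applying $\p_{zz} - \p_z$ to the penalized PDE formally yields
\[
\chi^\ep_\tau - \TT\chi^\ep + \beta'_\ep(w^\ep)\,\chi^\ep = -\bigl(\p_{zz}f - \p_z f\bigr) - \beta''_\ep(w^\ep)\,(w^\ep_z)^2,
\]
with $\chi^\ep(z,0) = 0$ and vanishing boundary data. The zeroth-order coefficient $\beta'_\ep(w^\ep)$ is non-negative, and $-\beta''_\ep(w^\ep)(w^\ep_z)^2 \geq 0$ since $\beta''_\ep \leq 0$.

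The principal obstacle, and the step I expect to be the hardest, is the sign of $-(\p_{zz}f - \p_z f)$. Because $f(\cdot,h)$ is piecewise linear in $y = e^z$, its second-order combination $\p_{zz} - \p_z$ vanishes in the interior of each piece but produces distributional Dirac contributions of opposite sign at the kinks $y = U'(h)$ and $y = U'(bh)$; in particular the source is not pointwise non-negative, and the differentiation above is only formal since $f$ is merely Lipschitz. I would address this by first replacing $f$ with a $C^2$-mollification $f^\delta$, applying the comparison argument to the resulting $\chi^{\ep,\delta}$---while carefully tracking how the localized Dirac approximations interact with the free-boundary bound $z^*(\tau,h) \leq \ln U'(h)$ from Proposition~\ref{pro:zup} and the vanishing of $w^{\ep,\delta}$ on the coincidence set---and then passing successively to the limits $\delta\to 0$ and $\ep\to 0$ via the uniform $W^{2,1}_{p,{\rm loc}}$ estimates behind Theorem~\ref{thm:w}. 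Summing the two inequalities through \eqref{uw} then yields the claim.
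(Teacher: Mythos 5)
Your first half is fine and matches the paper's opening step: for $\fh$ one applies $\p_{zz}-\p_z$ to the linear equation \eqref{fh_pb}, observes that $\wU(\cdot,\hh)$ and $\wwU_T$ are $C^1$ and convex in $y$ (so that $\wU_{yy},\wwU_T''\ge 0$ pointwise, with no singular part), and concludes by the parabolic maximum principle. But the second half --- proving $\p_{zz}w(\cdot,\cdot,s)-\p_z w(\cdot,\cdot,s)\ge 0$ for each fixed $s$ --- is where the plan breaks, and not merely as a technical difficulty to be mollified away. The source $-\bigl(\p_{zz}f-\p_z f\bigr) = -e^{2z}f_{yy}(e^z,s)$ is a genuine signed measure: $f_y$ jumps by $+1$ at $y=U'(s)$ and by $-b$ at $y=U'(bs)$, so $-e^{2z}f_{yy}$ carries a \emph{negative} Dirac mass at $y=U'(s)$. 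Since Proposition~\ref{pro:zup} gives $z^*(\tau,s)\le\ln U'(s)$, this negative mass sits inside the non-coincidence set $\{w>0\}$, so there is no hope that the vanishing of $w$ on the coincidence set neutralizes it; the penalty term $-\beta_\ep''(w^\ep)(w^\ep_z)^2$ is also irrelevant there because $w^\ep>\ep$ means $\beta_\ep''(w^\ep)=0$. Mollifying $f$ just replaces the Dirac by a concentrating negative bump and does not change the sign. In short, $w$ need not be convex in $y$, and the sub-lemma you would need is most likely false.

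What saves the argument (and is the paper's actual route) is that the bad Dirac masses disappear once you integrate in $h$: $\int_h^{\hh}f(\cdot,s)\,\ds = \wU(\cdot,\hh)-\wU(\cdot,h)$, and the resulting source for $\psi^\hh:=\p_{zz}\uh-\p_z\uh$ in the region $z>z^*(\tau,h)$ is exactly $\wU_{yy}(e^z,h)e^{2z}\ge 0$, with no singular part, because $\wU(\cdot,h)$ is a pointwise supremum of affine functions and hence $C^1$ and convex. So the correct move is to apply $\p_{zz}-\p_z$ to the identity $\uh_\tau-\TT\uh=\wU(e^z,h^*(z,\tau)\vee h)$ from \eqref{Luh} and run the maximum principle on $\psi^\hh$ directly, \emph{not} to decompose term by term. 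This does reintroduce the free boundary $z=z^*(\tau,h)$ as part of the parabolic boundary, and establishing $\psi^\hh\ge 0$ there requires a separate argument (the paper does a continuation in $h$ from $\hh$ downward, exploiting $\p_h\psi^\hh(z^*(\tau,h)^+,\tau,h)=-w_{zz}(z^*(\tau,h)^+,\tau,h)+w_z<0$, which follows from $w=w_z=w_\tau=0$ and $\tfrac{\kappa^2}{2}w_{zz}=f>0$ on the free boundary); on the coincidence side $z<z^*(\tau,h)$ one then uses that $\uh(\cdot,\cdot,h)-\uh(\cdot,\cdot,h^*(z,\tau))$ attains a minimum, giving $\uh_{zz}(z,\tau,h)\ge\uh_{zz}(z,\tau,h^*(z,\tau))$. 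None of this free-boundary bookkeeping appears in your sketch, and it is the real content of the lemma.
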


\begin{proof}
Set
\[
\psi^\hh(z,\tau,h) := \p_{zz} \uh(z,\tau,h) - \p_z \uh(z,\tau,h).
\]
We begin by verifying \eqref{uzzuz} for $h = \hh$. That is,
\begin{align}\label{psi>=0}
\psi^\hh(z,\tau,\hh) \geq 0.
\end{align}
Using the $C^{2+\alpha,1+\alpha/2}$ interior regularity for \eqref{fh_pb}, we see that $\psi^\hh(\cdot,\cdot,\hh)$ satisfies the exponential growth condition. Moreover, by direct computation:
\begin{align*}
\left\{
\begin{array}{ll}
\p_\tau \psi^\hh - \TT \psi^\hh = \wU_{yy}(e^z,\hh) e^{2z} \geq 0, & (z,\tau) \in \R \times (0,T], \\
\psi^\hh(z,0,\hh) = \wwU_T''(e^z) e^{2z} \geq 0, & z \in \R.
\end{array}
\right.
\end{align*}
Then the parabolic maximum principle yields \eqref{psi>=0}.

Next, we show that \eqref{uzzuz} holds on the free boundary:
\begin{align}\label{psiz}
\psi^\hh(z^*(\tau,h),\tau,h) \geq 0, \quad \text{for all } (\tau,h) \in [0,T] \times (0,h].
\end{align}
For any small $\ep>0$, define
\[
h_\varepsilon := \sup\left\{ h \in (0,\hh] \mid \exists \tau \in (0,T] \text{ such that } \psi^\hh(z^*(\tau,h),\tau,h) < -\varepsilon \right\}.
\]
We aim to prove $h_\varepsilon = 0$. Suppose for contradiction that $h_\varepsilon > 0$, and that for some $\tau_\varepsilon \in (0,T]$,
\begin{align}\label{psi>=-ep}
\psi^\hh(z^*(\tau_\varepsilon,h_\varepsilon),\tau_\varepsilon,h_\varepsilon) \leq -\varepsilon.
\end{align}
Let $z_\varepsilon := z^*(\tau_\varepsilon,h_\varepsilon)$.

Firstly, \eqref{psi>=0} implies $h_\ep< \hh$.
Note that
\begin{align*}
\left\{\begin{array}{ll}
(\p_\tau \psi^\hh  - \TT \psi^\hh)(z,\tau,h)  = \wU_{yy} (e^z,h) e^{2z}\geq 0,\quad z>z^*(\tau,h),\;h\in[h_\ep,\hh],\\
\psi^\hh (z^*(\tau,h),\tau,h)\geq -\ep,\quad h\in[h_\ep,\hh],\\
\psi^\hh (z,0,h)= \wwU_T'' (e^z) e^{2z}\geq 0,\quad z>z^*(0+,h)=\ln(U'(h)),\;h\in[h_\ep,\hh].
\end{array}\right.
\end{align*}
Applying the comparison principle we have
$$
\psi^\hh (z,\tau,h)\geq -\ep,\quad z\geq z^*(\tau,h),\;\tau\in [0,T],\;h\in[h_\ep,\hh].
$$

Since $w_\tau - \TT w + f(z,h) = 0$ for $z > z^*(\tau_\varepsilon,h_\varepsilon)$, and
\[
w(z_\varepsilon,\tau_\varepsilon,h_\varepsilon) = w_z(z_\varepsilon,\tau_\varepsilon,h_\varepsilon) = w_\tau(z_\varepsilon,\tau_\varepsilon,h_\varepsilon) = 0,
\]
and $z^*(\tau_\varepsilon,h_\varepsilon) < \ln U'(h_\varepsilon)$, it follows that
\[
\frac{\ka^2}{2} w_{zz}(z_\varepsilon^+,\tau_\varepsilon,h_\varepsilon) = f(z_\varepsilon,h_\varepsilon) \geq U'(h_\varepsilon) - e^{z_\varepsilon} > 0.
\]
Hence,
\begin{align}\label{psih}
\p_h \psi^\hh(z_\varepsilon^+,\tau_\varepsilon,h_\varepsilon) = - w_{zz}(z_\varepsilon^+,\tau_\varepsilon,h_\varepsilon) + w_z(z_\varepsilon,\tau_\varepsilon,h_\varepsilon) < 0.
\end{align}
This implies that there exists $h' > h_\varepsilon$ such that
\[
\psi^\hh(z_\varepsilon,\tau_\varepsilon,h_\varepsilon) > \psi^\hh(z_\varepsilon,\tau_\varepsilon,h') \geq -\varepsilon,
\]
which contradicts \eqref{psi>=-ep}. Therefore, $h_\varepsilon = 0$ and \eqref{psiz} holds.
\end{proof}

Now we prove that \eqref{uzzuz} holds for the case $z > z^*(\tau,h)$. Thanks to \eqref{psiz}, we have
\begin{align*}
\left\{
\begin{array}{ll}
(\p_\tau \psi^\hh - \TT \psi^\hh)(z,\tau,h) = \wU_{yy}(e^z,h) e^{2z} \geq 0, & z > z^*(\tau,h),\; h \in (0,\hh], \\
\psi^\hh(z^*(\tau,h),\tau,h) \geq 0, & h \in (0,\hh], \\
\psi^\hh(z,0,h) = \wwU_T''(e^z) e^{2z} \geq 0, & z > z^*(0+,h) = \ln(U'(h)),\; h \in (0,\hh].
\end{array}
\right.
\end{align*}
By the parabolic maximum principle, we conclude that
\[
\psi^\hh(z,\tau,h) \geq 0, \quad z \geq z^*(\tau,h),\; \tau \in [0,T],\; h \in [h_\varepsilon,\hh].
\]

Finally, we verify \eqref{uzzuz} for $z < z^*(\tau,h)$. Fix any $(z,\tau,h) \in \R \times (0,T] \times (0,\hh]$ such that $h < h^*(z,\tau)$. Since the function $\uh(\cdot,\cdot,h) - \uh(\cdot,\cdot,h^*(z,\tau))$ achieves its minimum at $(z,\tau)$, we have
\[
\uh(z,\tau,h) = \uh(z,\tau,h^*(z,\tau)), \quad \uh_z(z,\tau,h) = \uh_z(z,\tau,h^*(z,\tau)),
\]
and
\[
\uh_{zz}(z,\tau,h) \geq \uh_{zz}(z,\tau,h^*(z,\tau)).
\]
Hence,
\begin{align}\label{psipsi}
\psi^\hh(z,\tau,h) \geq \psi^\hh(z,\tau,h^*(z,\tau)) \geq 0.
\end{align}

\begin{proposition}\label{pro:vyy}
For any $(y,\tau,h) \in \R^+ \times (0,T] \times (0,h]$, we have
\begin{align}\label{vyy}
v_{yy}(y,\tau,h) > 0.
\end{align}
\end{proposition}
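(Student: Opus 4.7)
The plan is to deduce the proposition from the non-strict inequality \eqref{uzzuz} established in the previous lemma, using a strong maximum principle argument to upgrade $\geq 0$ to $> 0$. First I would translate the claim to the $u$ variable: since $v(y,\tau,h)=u(\ln y,\tau,h)$, a direct chain-rule computation yields $v_{yy}(y,\tau,h)=y^{-2}\bigl(u_{zz}(z,\tau,h)-u_z(z,\tau,h)\bigr)$ with $z=\ln y$, so the proposition is equivalent to the strict positivity of $\psi:=u_{zz}-u_z$ on $\R\times(0,T]\times\R^+$. Passing to the limit $\hh\to\infty$ in \eqref{uzzuz}, using the $C^{2,1,1}$ convergence $\uh\to u$ from \thmref{u}, I immediately get $\psi\geq 0$, hence $v_{yy}\geq 0$.

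To upgrade to strict inequality I would split along the free boundary. In the non-binding region $\{z>z^*(\tau,h)\}$ the function $u(\cdot,\cdot,h)$ satisfies the linear parabolic equation $u_\tau-\TT u=\wU(e^z,h)$, and differentiating twice in $z$ shows that $\psi$ solves
$$\psi_\tau-\TT\psi=\wU_{yy}(e^z,h)\,e^{2z}.$$
A direct inspection of the piecewise definition of $\wU$ shows $\wU_{yy}(y,h)=-1/U''\bigl((U')^{-1}(y)\bigr)>0$ on the open interval $U'(h)<y<U'(bh)$ (i.e.\ in the medium consumption strip) and $\wU_{yy}=0$ outside. Together with $\psi\geq 0$ on the parabolic boundary and the strong maximum principle applied to $e^{\rho\tau}\psi$, the strictly positive source on the medium consumption strip forces $\psi>0$ throughout the non-binding region.

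For the binding region $\{z\leq z^*(\tau,h)\}$, I would reuse the observation made in the proof of \lemref{lem:unzz}: when $h<h^*(z,\tau)$ the function $\uh(z,\tau,\cdot)-\uh(z,\tau,h^*(z,\tau))$ attains its minimum at the argument $h$, so $u(z,\tau,h)=u(z,\tau,h^*(z,\tau))$ and $u_z(z,\tau,h)=u_z(z,\tau,h^*(z,\tau))$, whereas $u_{zz}(z,\tau,h)\geq u_{zz}(z,\tau,h^*(z,\tau))$. Passing to the limit $\hh\to\infty$ yields $\psi(z,\tau,h)\geq \psi(z,\tau,h^*(z,\tau))$. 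Because the point $(z,\tau)$ lies on the free boundary corresponding to the parameter $h^*(z,\tau)$ (i.e.\ $z=z^*(\tau,h^*(z,\tau))$), the strict positivity obtained in the non-binding step, together with the continuity of $\psi$ up to the free boundary given by \propref{pro:y5}, delivers $\psi(z,\tau,h^*(z,\tau))>0$, hence $\psi(z,\tau,h)>0$.

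The main obstacle is the second step: verifying that the strong maximum principle really applies across the free boundary in the relevant form, and that the strictly positive source region $\{U'(h)<e^z<U'(bh)\}$ genuinely intersects the non-binding domain $\{z>z^*(\tau,h)\}$ on a set of positive measure. This intersection is non-empty thanks to \propref{pro:zup}, which ensures $z^*(\tau,h)\leq\ln U'(h)<\ln U'(bh)$, so the positivity of the forcing is not washed away by being restricted to a null set, and the maximum principle argument goes through.
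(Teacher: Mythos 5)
Your overall decomposition (translate $v_{yy}>0$ to $\psi:=u_{zz}-u_z>0$, prove strict positivity in the non-binding set $\{z>z^*(\tau,h)\}$ via the strong maximum principle, and then reduce the binding set $\{z<z^*(\tau,h)\}$ to the free-boundary value $\psi(z,\tau,h^*(z,\tau))$ via the minimum-of-differences argument) is exactly the paper's. The gap is precisely at the free boundary itself: after the strong maximum principle delivers $\psi>0$ strictly in the open region $z>z^*(\tau,h)$, you claim that ``continuity of $\psi$ up to the free boundary delivers $\psi(z,\tau,h^*(z,\tau))>0$.'' That is false --- continuity of a function that is strictly positive on an open set only guarantees $\geq 0$ at the boundary, not $>0$. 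Since the reduction of the binding region terminates exactly at the point $(z^*(\tau,h^*(z,\tau)),\tau,h^*(z,\tau))$, which sits on the free boundary of its own problem, your argument cannot close without a separate strict-positivity result on $z=z^*(\tau,h)$ for all $h$.

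The paper fills this in with an extra Hopf-type step. From the obstacle equation $w_\tau-\TT w+f=0$ together with $w=w_z=w_\tau=0$ at $(z^*(\tau,h),\tau,h)$ and $f(z^*(\tau,h),h)>0$, one gets $w_{zz}(z^*(\tau,h)^+,\tau,h)>0$, hence $\p_h\psi(z^*(\tau,h)^+,\tau,h)=-w_{zz}(z^*(\tau,h)^+,\tau,h)<0$. This strict monotonicity in $h$ gives $\psi(z^*(\tau,h),\tau,h)>\psi(z^*(\tau,h),\tau,h')\geq 0$ for some $h'>h$ (note $z^*(\tau,h)>z^*(\tau,h')$ so the evaluation point is in the open non-binding set for parameter $h'$), which is the missing strict inequality on the free boundary. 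Your version needs this (or an equivalent) step; the rest is sound, and your observation that $\wU_{yy}>0$ on the medium-consumption strip $\{U'(h)<e^z<U'(bh)\}$ does correctly rule out $\psi\equiv 0$ in the interior, thanks to $z^*(\tau,h)\leq\ln U'(h)$ from \propref{pro:zup}.
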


\begin{proof}
It suffices to show that for any $(z,\tau,h) \in \R \times (0,T] \times (0,h]$,
\begin{align}\label{uzzuz>0}
u_{zz}(z,\tau,h) - u_z(z,\tau,h) > 0.
\end{align}
Letting $\hh \to \infty$ in \eqref{uzzuz}, we obtain
\[
u_{zz}(z,\tau,h) - u_z(z,\tau,h) \geq 0.
\]
We now prove the inequality is strict. Define
\[
\psi(z,\tau,h) := u_{zz}(z,\tau,h) - u_z(z,\tau,h).
\]
From the regularity of $u$ and the PDE satisfied by $u$, we know that
\begin{align*}
\left\{
\begin{array}{ll}
(\p_\tau \psi - \TT \psi)(z,\tau,h) = \wU_{yy}(e^z,h) e^{2z} \geq 0, & z > z^*(\tau,h),\; \tau \in (0,T], \\
\psi(z^*(\tau,h),\tau,h) \geq 0, & \tau \in (0,T], \\
\psi(z,0,h) = \wwU_T''(e^z) e^{2z} \geq 0, & z > z^*(0+,h) = \ln(U'(h)).
\end{array}
\right.
\end{align*}
By the strong maximum principle, we obtain
\begin{align}\label{psi>0}
\psi(z,\tau,h) > 0, \quad z > z^*(\tau,h),\; \tau \in (0,T].
\end{align}

Moreover, analogous to the argument used for \eqref{psih}, we have
\[
\p_h \psi(z^*(\tau,h)^+,\tau,h) = -w_{zz}(z^*(\tau,h)^+,\tau,h) < 0, \quad \tau \in (0,T].
\]
Therefore, there exists $h' > h$ such that
\[
\psi(z^*(\tau,h),\tau,h) > \psi(z^*(\tau,h),\tau,h') \geq 0,
\]
which, together with \eqref{psi>0}, implies
\[
\psi(z^*(\tau,h),\tau,h) > 0, \quad \tau \in (0,T].
\]

Finally, for any $(z,\tau,h) \in \R \times [0,T] \times \R^+$ with $z < z^*(\tau,h)$, the function $u(\cdot,\cdot,h) - u(\cdot,\cdot,h^*(z,\tau))$ attains its minimum at $(z,\tau)$. Thus,
\[
u_z(z,\tau,h) = u_z(z,\tau,h^*(z,\tau)), \quad
u_{zz}(z,\tau,h) \geq u_{zz}(z,\tau,h^*(z,\tau)),
\]
which implies
\[
\psi(z,\tau,h) \geq \psi(z,\tau,h^*(z,\tau)) > 0.
\]
\end{proof}

\begin{proposition}\label{lem:vy}
For each $(\tau,h)\in (0,T]\times \R^+$, we have
\begin{align}\label{vy1}
\lim\limits_{y\to+\infty}v_y(y,\tau,h) = -\frac{bh}{r}(1-e^{-r\tau}),\\
\label{vy2}
\lim\limits_{y\to 0+}v_y(y,\tau,h) = -\infty.
\end{align}
\end{proposition}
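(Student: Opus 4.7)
By Proposition~\ref{pro:vyy}, $v(\cdot,\tau,h)$ is strictly convex in $y$, so $v_y(\cdot,\tau,h)$ is strictly increasing, and both one-sided limits $\lim_{y\to 0+}v_y$ and $\lim_{y\to+\infty}v_y$ exist in $[-\infty,0]$. The two limits will be handled by very different arguments.

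\textbf{Strategy for \eqref{vy1}.} The plan is to reduce to a linear PDE plus an explicit particular solution. On the region $\{y \geq U'(bh)\}$ we have $\wU(y,h)=U(bh)-bhy$; moreover $U'(bh)>U'(h)\geq e^{z^*(\tau,h)}$ (the last inequality by Proposition~\ref{pro:zup}), so this region lies strictly inside the non-switching set and $v$ satisfies the linear equation $v_\tau-\LL v=U(bh)-bh\,y$ there. Write down the explicit particular solution
\[
g(y,\tau,h):=-\frac{bh}{r}\bigl(1-e^{-r\tau}\bigr)\,y+\frac{U(bh)}{\rho}\bigl(1-e^{-\rho\tau}\bigr),
\]
a direct computation shows $g_\tau-\LL g=U(bh)-bh\,y$ and $g(y,0,h)=0$. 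The residual $\phi:=v-g$ then solves the homogeneous equation $\phi_\tau-\LL\phi=0$ on $\{y\geq U'(bh)\}\times[0,T]$, with initial data $\wwU_T(y)$ (bounded for $y\geq U_T'(0+)$) and bounded data on the inner boundary $y=U'(bh)$ by continuity of $v$. Applying the maximum principle with a slowly growing exponential barrier of the type used in the proof of Lemma~\ref{lem:uu} yields a uniform bound on $\phi$. Since $g_y$ is constant in $y$ and $v_y$ has a one-sided limit at $+\infty$ by convexity, $\phi_y=v_y-g_y$ also has a limit as $y\to\infty$; if that limit were nonzero, $\phi$ would grow linearly, contradicting boundedness. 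Hence $v_y\to g_y=-\tfrac{bh}{r}(1-e^{-r\tau})$.

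\textbf{Strategy for \eqref{vy2}.} For fixed $h>0$, any sufficiently small $y$ places $(y,\tau,h)$ in the switching region, where $v_h=0$ and hence $v(y,\tau,h)=V^*(y,\tau)$ is independent of $h$ (for $h$ ranging over $(0,h^*(y,\tau)]$). Passing to the non-switching side along the free boundary $h=h^*(y,\tau)$ and using continuity of the second-order derivatives of $v$ in $y$ and $\tau$ (Theorem~\ref{u}), one obtains the PDE
\[
V^*_\tau(y,\tau)-\LL V^*(y,\tau)=U\bigl(h^*(y,\tau)\bigr)-h^*(y,\tau)\,y,
\]
with $h^*(y,\tau)\to+\infty$ as $y\to 0+$ (by Proposition~\ref{pro:zup} together with $U'(+\infty)=0$ from \eqref{U1}, which forces $z^*(\tau,h)\leq\ln U'(h)\to-\infty$ as $h\to\infty$). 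The diverging coefficient $-h^*(y,\tau)$ in front of $y$ in the source acts as a singular driver that makes $V^*$ vary sharply near $y=0$. To exploit this quantitatively, construct a super-solution of the form $\psi(y,\tau)=B(\tau)-A(\tau)\,y^\beta$ with a suitable $\beta\in(0,1)$, calibrated so that $\psi_\tau-\LL\psi$ dominates the source term at leading order and $\psi(y,0)\geq\wwU_T(y)$; the comparison principle then yields $V^*(y,\tau)\leq\psi(y,\tau)$ for small $y$. Combining this upper bound with the strict convexity of $V^*$ in $y$ (Proposition~\ref{pro:vyy}) and the mean-value-type inequality $|V^*_y(y)|\cdot y/2\leq V^*(y/2)-V^*(y)$ forces $V^*_y(y,\tau)\to-\infty$ as $y\to 0+$, whence $v_y(y,\tau,h)\to-\infty$.

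\textbf{Main obstacle.} The difficulty concentrates entirely in \eqref{vy2}: one needs a quantitative lower bound on the rate at which $h^*(y,\tau)\to+\infty$ (equivalently, on how fast $z^*(\tau,h)\to-\infty$ as $h\to\infty$) in order to calibrate the exponent $\beta$ and the coefficient $A(\tau)$ in the super-solution $\psi$. The qualitative monotonicity and continuity statements established so far (Propositions~\ref{pro:y_finite}--\ref{pro:y7}) are not sufficient, and one must perform a separate asymptotic analysis of the obstacle problem \eqref{w_pb} for $h\to\infty$, for instance via a scaling argument in $h$ exploiting the form of $f(z,h)$ for large $h$. Once such a rate is secured, the super-solution construction and the comparison argument are standard.
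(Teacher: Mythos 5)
Your argument for \eqref{vy1} is essentially sound and matches the spirit of the paper's, although the paper works one derivative up: it derives a PDE directly for $\phi = v_y + \tfrac{bh}{r}(1-e^{-r\tau})$ (equivalently $e^{-z}u_z + \tfrac{bh}{r}(1-e^{-r\tau})$), shows $\phi \leq 0$ globally by the maximum principle, and then traps $\phi$ from below on $\{z>z_0\}$ with the explicit subsolution $-\epsilon/2 - M e^{z_0-z}$. Your variant at the level of $v$ itself — bound $v-g$ by a barrier, then argue a nonzero asymptotic slope of $v-g$ would force linear growth — reaches the same conclusion, and is fine.

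The substantive problem is in \eqref{vy2}, and you have correctly sensed that something is missing in your argument, but misdiagnosed the cure. You conclude that a \emph{quantitative} rate at which $h^*(y,\tau)\to+\infty$ (equivalently at which $z^*(\tau,h)\to-\infty$) is indispensable, and that the paper must have performed a separate asymptotic analysis of the obstacle problem. It does not. The paper's device is to choose, \emph{for each target $M>0$}, a threshold $z_0 := z^*\bigl(T,\,2Mr(1-e^{-r\tau})^{-1}+bh\bigr)$; this uses only the purely qualitative facts that $z^*(\tau,\cdot)$ is finite and strictly decreasing (Propositions~\ref{pro:y_finite}, \ref{pro:y6}) and that $z^*(\tau,h)\leq \ln U'(h)\to-\infty$ as $h\to\infty$ (Proposition~\ref{pro:zup} plus $U'(+\infty)=0$). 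On the triangular region $\Omega=\{z<z_0-\lambda s,\ s\in(0,T]\}$ the source $g(z,s,h)\leq -2Mr(1-e^{-r\tau})^{-1}$, and one compares $\phi$ with the explicit supersolution
\[
\Phi(z,s)=-\frac{2M}{1-e^{-r\tau}}\bigl(1-e^{z-z_0+\lambda s}\bigr)\bigl(1-e^{-rs}\bigr),
\]
which vanishes on $\partial_p\Omega$ and satisfies $\p_s\Phi-\JJ\Phi\geq -2Mr(1-e^{-r\tau})^{-1}$. This gives $\phi(z,\tau,h)\leq \Phi(z,\tau)<-M$ for all $z<z_0-\lambda\tau-\ln 2$, and since $M$ is arbitrary one gets $\phi\to-\infty$. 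No rate is needed because $z_0$ is allowed to depend on $M$, and the supersolution is built on a moving domain anchored at $z_0$ rather than a fixed power-law profile $B(\tau)-A(\tau)y^\beta$ near $y=0$.

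One further flaw worth flagging: your ``mean-value-type inequality'' $|V^*_y(y)|\cdot y/2\leq V^*(y/2)-V^*(y)$ is the wrong direction for your purposes. For a convex decreasing function it yields an \emph{upper} bound on $|V^*_y(y)|$ in terms of the increment toward the origin, so it cannot force $|V^*_y|\to\infty$. You would instead need the increment away from the origin, $|V^*_y(y)|\geq \bigl(V^*(y)-V^*(2y)\bigr)/y$. More fundamentally, a bound of the type $V^*(y)\leq B(\tau)-A(\tau)y^\beta$ cannot by itself force $V^*_y\to-\infty$ (take $V^*$ bounded and smooth near $0$); and $V^*$ need not blow up as $y\to0$ at all (e.g.\ for bounded $U,U_T$ such as CARA), so the entire route through a growth bound on $V^*$ is misplaced. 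The paper's argument bypasses this by bounding $\phi=v_y+\tfrac{bh}{r}(1-e^{-r\tau})$ directly, never invoking any blow-up of $v$ itself.
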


\begin{proof}
Observe that $v_y(y,\tau,h)=e^{-z} u_z(z,\tau,h)$, where $z=\ln y$. Define
\[
\phi(z,\tau,h):= e^{-z} u_z(z,\tau,h) + \frac{bh}{r}(1-e^{-r\tau}).
\]
It suffices to prove:
\begin{align}
\label{p1}
\lim\limits_{z\to +\infty}\phi(z,\tau,h) = 0,\\
\label{p2}
\lim\limits_{z\to -\infty}\phi(z,\tau,h) = -\infty.
\end{align}

We first establish
\begin{align}\label{psi<=0}
\phi \leq 0.
\end{align}
From \eqref{TuR}, $\phi(z,\tau,h)$ satisfies the following PDE:
\begin{align}\label{psi_pb}
\left\{
\begin{array}{ll}
\p_\tau \phi - \JJ \phi = g(z,\tau,h), & (z,\tau) \in \R\times(0,T], \\
\phi(z,0,h) = g_T(z), & z \in \R,
\end{array}
\right.
\end{align}
where
\begin{align*}
\JJ \phi &:= \frac{1}{2} \kappa^2 \phi_{zz} + (\rho - r + \frac{1}{2} \kappa^2) \phi_z - r \phi, \\
g(z,\tau,h) &:= \wU_y(e^z,h^*(z,\tau)\vee h) + e^{-z} \wU_h(e^z,h^*(z,\tau)\vee h) \p_z(h^*(z,\tau)\vee h) + bh, \\
g_T(z) &:= \wwU_T'(e^z).
\end{align*}

We note that $\wU_h(e^z,h)> 0$ when $z < \ln U'(h)$, and
\[
\p_z(h^*(z,\tau)\vee h) =
\begin{cases}
\p_z h^*(z,\tau) \leq 0, & z < z^*(\tau,h), \\
0, & z > z^*(\tau,h),
\end{cases}
\]
Since $z^*(\tau,h) < \ln U'(h)$, we conclude:
\begin{align}\label{g2}
\wU_h(e^z,h^*(z,\tau)\vee h)\p_z(h^*(z,\tau)\vee h) \leq 0.
\end{align}
Using the bounds $-h \leq \wU_y \leq -bh$, we have $g(z,\tau,h) \leq 0$. Moreover, since $g_T(z) = \wwU_T'(e^z) \leq 0$, the parabolic maximum principle yields \eqref{psi<=0}.

To prove \eqref{p1}, fix $\epsilon > 0$. Note that for $z > \ln(U'(bh))$, we have $\wU_y(e^z,h) = -bh$ and $g(z,\tau,h) = 0$. Also, $g_T(z) \to 0$ as $z \to +\infty$. Choose $z_0 > \ln(U'(bh))$ such that $g_T(z) > -\epsilon/2$ for $z > z_0$. Then $\phi$ solves:
\begin{align}\label{psi_pb+}
\left\{
\begin{array}{ll}
\p_\tau \phi - \JJ \phi = 0, & (z,\tau) \in (z_0,+\infty) \times (0,T], \\
\phi(z,0,h) > -\epsilon/2, & z > z_0.
\end{array}
\right.
\end{align}

Define $M := \max_{\tau \in [0,T]} |\phi(z_0,\tau,h)|$. Then $-\epsilon/2 - M e^{z_0 - z}$ is a subsolution. Choosing $N = z_0 + \ln(2M/\epsilon)$ ensures
\[
\phi(z,\tau,h) \geq -\frac{\epsilon}{2} - M e^{z_0 - z} > -\epsilon, \quad z > N.
\]
Hence, \eqref{p1} holds.

To prove \eqref{p2}, fix $M > 0$ and define $z_0 := z^*(T,2Mr(1-e^{-r\tau})^{-1} + bh)$. Since $h^*(z,\tau)$ is strictly decreasing in $z$ and $\tau$, for $z \leq z_0$ and $s \in [0,T]$,
\[
h^*(z,s) \geq h^*(z_0,T) = 2Mr(1-e^{-r\tau})^{-1} + bh.
\]
Assuming $2Mr(1-e^{-r\tau})^{-1} + bh > h$, then $z_0 < z^*(s,h) < \ln U'(h)$ and for $z < z_0$,
\[
\wU_y(e^z,h^*(z,s)) = -h^*(z,s), \quad g(z,s,h) \leq -h^*(z,s) + bh \leq -2Mr(1-e^{-r\tau})^{-1}.
\]

Define $\lambda := \kappa^2 + \rho$, and the domain
\[
\Omega := \{(z,s) \mid z < z_0 - \lambda s,\; s \in (0,T]\} \subset (-\infty,z_0]\times[0,T].
\]
Then $\phi$ satisfies:
\begin{align}\label{psi_pb-}
\left\{
\begin{array}{ll}
\p_s \phi - \JJ \phi = g(z,s,h) \leq -2Mr(1-e^{-r\tau})^{-1}, & (z,s) \in \Omega, \\
\phi(z,s,h) < 0, & (z,s) \in \partial_p \Omega,
\end{array}
\right.
\end{align}
where $\partial_p \Omega$ denotes the parabolic boundary.

Define a supersolution:
\[
\Phi(z,s) := -2M(1-e^{-r\tau})^{-1}(1 - e^{z - z_0 + \lambda s})(1 - e^{-rs}).
\]
We compute:
\begin{align*}
\p_s \Phi - \JJ \Phi &= -2Mr(1-e^{-r\tau})^{-1}(1 - e^{z - z_0 + \lambda s}) \\
&\quad + 2Mr(1-e^{-r\tau})^{-1}(1 - e^{-rs}) e^{z - z_0 + \lambda s}(\lambda - \kappa^2 - \rho + r) \\
&\geq -2Mr(1-e^{-r\tau})^{-1}, \quad (z,s) \in \Omega.
\end{align*}
Since $\Phi = 0$ on $\partial_p \Omega$, we have $\phi \leq \Phi$ in $\Omega$.

Let $z_{\tau,h} := z_0 - \lambda \tau - \ln 2$, then
\[
\phi(z,\tau,h) \leq \Phi(z,\tau) = -2M(1 - e^{z - z_0 + \lambda \tau}) < -M, \quad z < z_{\tau,h}.
\]
This proves \eqref{p2}.
\end{proof}

\section{The Solution and the Free Boundary to the Original Problem}\label{sec:V}
\setcounter{equation}{0}

In this section, we present the proofs of \thmref{thm:V} and \thmref{thm:h}. Our strategy is to construct the solution and characterize the free boundary of the original problem \eqref{V_pb} using the solution and free boundary of the dual problem \eqref{v_pb}.

\subsection{Proof of Theorem~\ref{thm:V}}

Let $t = T - \tau$, and define
\begin{align}\label{V_def}
V(x,t,h) := \inf\limits_{y>0} (v(y,\tau,h) + yx),\quad \forall\; x > \frac{bh}{r}(1 - e^{-r(T - t)}),\; t \in [0,T).
\end{align}
We aim to prove that the function $V(x,t,h)$ defined in \eqref{V_def} solves Problem \eqref{V_pb}.

Using \eqref{vyy}, \eqref{vy1}, and \eqref{vy2}, the unique minimizer of \eqref{V_def}, denoted $I(x,t,h)$, satisfies
\begin{align*}
I(x,t,h) := \argmin\limits_{y>0} (v(y,\tau,h) + yx) = (v_y(\cdot,\tau,h))^{-1}(-x),\quad (x,t,h) \in \RR_T,
\end{align*}
where $(v_y(\cdot,\tau,h))^{-1}(\cdot)$ denotes the inverse function of $v_y(\cdot,\tau,h)$. Hence,
\begin{align*}
V(x,t,h) = v(I(x,t,h),\tau,h) + xI(x,t,h).
\end{align*}
This leads to the following derivatives:
\begin{align}\label{Vx}
V_x(x,t,h) &= v_y(I(x,t,h),\tau,h) I_x(x,t,h) + xI_x(x,t,h) + I(x,t,h) = I(x,t,h) > 0, \\
\label{Vxx}
V_{xx}(x,t,h) &= I_x(x,t,h) = \partial_x[(v_y(\cdot,\tau,h))^{-1}(-x)] = \frac{-1}{v_{yy}(I(x,t,h),\tau,h)} < 0, \\
\label{Vt}
V_t(x,t,h) &= -v_\tau(I(x,t,h),\tau,h), \\
V_h(x,t,h) &= v_h(I(x,t,h),\tau,h).
\end{align}
Thus, \eqref{V1} holds and $V \in C^{2,1,1}(\RR_T)$.

Moreover,
\begin{align*}
& \Big(-V_t + \frac{1}{2}\kappa^2 \frac{V_x^2}{V_{xx}} - \wU(V_x,h) - r x V_x + \rho V\Big)(x,t,h) \\
=& \Big(v_\tau - \frac{1}{2} \kappa^2 y^2 v_{yy} - (\rho - r) y v_y + \rho v - \wU(y,h)\Big)(I(x,t,h), \tau, h),
\end{align*}
showing that $V$ satisfies the variational inequality \eqref{V_pb1}.

We now verify the terminal condition:
\[
\lim\limits_{t \to T^-} V(x,t,h) = U_T(x)
\]
for any $x > 0$ and $h > 0$. Suppose by contradiction that this limit does not hold. Then there exist sequences $\tau_n \to 0^+$, $x_n \to x$, and $h_n \to h$ such that
\begin{align}\label{not}
K := \lim\limits_{n \to \infty} V(x_n, T - \tau_n, h_n) \neq U_T(x).
\end{align}
Let $y_n := (v_y(\cdot, \tau_n, h_n))^{-1}(-x_n)$. We analyze three cases:

\textbf{Case 1:} $y_n \to +\infty$ (or a sub sequence of $y_n$ tends to $+\infty$). For any fixed $y > 0$, for sufficiently large $n$ we have $y < y_n$ and thus $v_y(y, \tau_n, h_n) < v_y(y_n, \tau_n, h_n) = -x_n$. Taking $n \to \infty$ gives $\wwU_T'(y) = v_y(y, 0, h) \leq -x$, and further letting $y \to \infty$ yields $\wwU_T'(\infty) \leq -x$, contradicting $\wwU_T'(\infty) = 0$.

\textbf{Case 2:} $y_n \to 0+$ (or a sub sequence of $y_n$ tends to $0+$). Then for any $y > 0$, sufficiently large $n$ yields $y > y_n$, implying $v_y(y, \tau_n, h_n) > -x_n$, and thus $\wwU_T'(y) = v_y(y,0,h) \geq -x$. Taking $y \to 0+$ implies $\wwU_T'(0+) \geq -x$. If $U_T \not\equiv 0$, then $\wwU_T'(0+) = -\infty$, a contradiction. If $U_T \equiv 0$, then $\wwU_T(y) \equiv 0$, and since $v_y < 0$:
\[
V(x_n, T - \tau_n, h_n) = v(y_n, \tau_n, h_n) + y_n x_n > v(y, \tau_n, h_n) + y_n x_n \to 0,
\]
while
\[
V(x_n, T - \tau_n, h_n) \leq v(y, \tau_n, h_n) + y x_n \to y x.
\]
Since $y > 0$ is arbitrary, we conclude $\lim_{n \to \infty} V(x_n, T - \tau_n, h_n) = 0 = U_T(x)$, contradicting \eqref{not}.

\textbf{Case 3:} $y_n$ remains in $[a,b]$ for some $a,b > 0$. Then a subsequence, still denoted by $y_n$ that $y_n \to y_0 > 0$, which implies
\[
V(x_{n}, T - \tau_{n}, h_{n}) = v(y_{n}, \tau_{n}, h_{n}) + y_{n} x_{n} \to \wwU_T(y_0) + y_0 x \geq U_T(x),
\]
and for any $y > 0$,
\[
V(x_{n}, T - \tau_{n}, h_{n}) \leq v(y, \tau_{n}, h_{n}) + y x_{n} \to \wwU_T(y) + y x.
\]
Taking infimum over $y > 0$ yields $\lim_{n \to \infty} V(x_n, T - \tau_n, h_n) = U_T(x)$, again contradicting \eqref{not}.

We now prove the bounds in \eqref{V}. The growth condition \eqref{U2} implies
\[
\wwU(y), \; \wwU_T(y) \leq K\left[1 + \frac{\gamma}{1 - \gamma} \left(\frac{y}{K}\right)^{-\frac{1 - \gamma}{\gamma}}\right].
\]
Then by the comparison principle to the linear problem \eqref{uu_pb} we can prove
$$
\uu(z,\tau) \leq e^{\la \tau} K \left[ 1 + \frac{\gamma}{1-\gamma} \left( \frac{y}{K} \right)^{-\frac{1-\gamma}{\gamma}} \right]
$$
for some large $\la>0$, which combining \eqref{uu} gives
$$
v(y,\tau,h)\leq e^{\la \tau} K \left[ 1 + \frac{\gamma}{1-\gamma} \left( \frac{y}{K} \right)^{-\frac{1-\gamma}{\gamma}} \right].
$$
Plugging into \eqref{V_def} yields the second inequality in \eqref{V}.

To prove the first inequality, define
\[
\uv(y, \tau) = \frac{U(bh)}{\rho}(1 - e^{-\rho \tau}) - \frac{bh}{r}(1 - e^{-r \tau})y + K^{1/\theta} \frac{\theta}{1 - \theta} y^{1 - 1/\theta} e^{\lambda \tau} - K e^{-\rho \tau},
\]
where
\[
\lambda = -\frac{\kappa^2}{2} \frac{\theta - 1}{\theta^2} + (\rho - r) \frac{\theta - 1}{\theta} - \rho.
\]
This solves
\begin{align}\label{uv_pb}
\left\{
\begin{array}{ll}
\uv_\tau - \LL \uv = U(bh) - bhy, & (y, \tau) \in \R^+ \times (0,T], \\
\uv(y,0) = K^{1/\theta} \frac{\theta}{1 - \theta} y^{1 - 1/\theta} - K, & y > 0.
\end{array}
\right.
\end{align}
Since $\wU(y,h) \geq U(bh) - bhy$ and $\wwU_T(y) \geq \uv(y,0)$ (by \eqref{U3}), and $v$ is a supersolution of \eqref{uv_pb}, it follows that
\begin{align*}
V(x,t,h) \geq \inf_{y>0} (\uv(y,\tau) + yx)
= \frac{K}{1 - \theta} e^{\lambda \theta \tau} \left(x - \frac{bh}{r}(1 - e^{-r\tau})\right)^{1 - \theta}
+ \frac{U(bh)}{\rho}(1 - e^{-\rho \tau}) - K e^{-\rho \tau},
\end{align*}
proving the first inequality in \eqref{V}.

\subsection{Proof of Theorem~\ref{thm:h}}

Let $t = T - \tau$. Define the free boundary of the original problem \eqref{V_pb} as
\begin{align}\label{x^*}
x^*(t,h) := -v_y(e^{z^*(\tau,h)}, \tau, h), \quad t \in [0, T),\; h \in \R^+.
\end{align}
Since $v \in C^{2,1,1}(\R^+ \times (0,T] \times \R^+)$ and $z^* \in C((0,T] \times \R^+)$, it follows that $x^* \in C([0,T) \times \R^+)$.

Noting that $v_y < -\frac{bh}{r}(1 - e^{-r \tau})$, we obtain $x^*(t,h) > \frac{bh}{r}(1 - e^{-r(T - t)})$. Furthermore, since $z^*(\tau,h)$ is strictly decreasing in $h$ and $v_{yh} \leq 0$, we deduce that $x^*(t,h)$ is strictly increasing in $h$.

Define
\begin{align}\label{xHL}
x^H(t,h) := -v_y(U'(h), \tau, h), \quad x^L(t,h) := -v_y(U'(bh), \tau, h), \quad t \in [0, T),\; h \in \R^+.
\end{align}
Then both $x^H(t,h)$ and $x^L(t,h)$ are continuous in $(t,h)$ and strictly decreasing in $h$. Taking the limit as $t \to T-$, we obtain:
\[
x^H(T-, h) = -\widehat{U}_T'(U'(h)), \quad x^L(T-, h) = -\widehat{U}_T'(U'(bh)),
\]
and
\[
x^*(T-, h) = -\widehat{U}_T'(U'(h)),
\]
which establishes the terminal behavior of the free boundary.

\bibstyle{plainnat}


\bibliography{references}

\begin{thebibliography}{10}

\bibitem{BBY19}
Bahman Angoshtari, Erhan Bayraktar, and Virginia Young.
\newblock Optimal dividend distribution under drawdown and ratcheting
  constraints on dividend rates.
\newblock {\em SIAM Journal on Financial Mathematics}, 10(2):547--577, 2019.

\bibitem{Arun12}
Thillaisundaram Arun.
\newblock The merton problem with a drawdown constraint on consumption.
\newblock {\em arXiv preprint arXiv:1210.5205}, 2012.

\bibitem{BLN20}
Servaas Bilsen, Roger Laeven, and Theo Nijman.
\newblock Consumption and portfolio choice under loss aversion and endogenous
  updating of the reference level.
\newblock {\em Management Science}, 66(9):3927--3955, 2020.

\bibitem{CLYY23}
Xiaoshan Chen, Xun Li, Fahuai Yi, and Xiang Yu.
\newblock Optimal consumption under a drawdown constraint over a finite
  horizon.
\newblock {\em Automatica}, 173:112034, 2025.

\bibitem{Cur17}
Giuliano Curatola.
\newblock Optimal portfolio choice with loss aversion over consumption.
\newblock {\em The Quarterly Review of Economics and Finance}, 66:345--358,
  2017.

\bibitem{DLPY22}
Shuoqing Deng, Xun Li, Huy{\^e}n Pham, and Xiang Yu.
\newblock Optimal consumption with reference to past spending maximum.
\newblock {\em Finance and Stochastics}, 26(2):217--266, 2022.

\bibitem{DK03}
J{\'e}r{\^o}me Detemple and Ioannis Karatzas.
\newblock Non-addictive habits: optimal consumption-portfolio policies.
\newblock {\em Journal of Economic Theory}, 113(2):265--285, 2003.

\bibitem{DZ91}
J{\'e}r{\^o}me Detemple and Fernando Zapatero.
\newblock Optimal consumption-portfolio policies with habit formation.
\newblock {\em Mathematical Finance}, 2:251--274, 1992.

\bibitem{Dyb95}
Philip Dybvig.
\newblock Dusenberry's ratcheting of consumption: optimal dynamic consumption
  and investment given intolerance for any decline in standard of living.
\newblock {\em The Review of Economic Studies}, 62(2):287--313, 1995.

\bibitem{Ev16}
Lawrence Evans.
\newblock {\em Partial differential equations}, volume~19.
\newblock American Mathematical Society, 2022.

\bibitem{Fr75}
Avner Friedman.
\newblock Parabolic variational inequalities in one space dimension and
  smoothness of the free boundary.
\newblock {\em Journal of Functional Analysis}, 18(2):151--176, 1975.

\bibitem{JO22}
Junkee Jeon and Jehan Oh.
\newblock Finite horizon portfolio selection problem with a drawdown constraint
  on consumption.
\newblock {\em Journal of Mathematical Analysis and Applications},
  506(1):125542, 2022.

\bibitem{JP21}
Junkee Jeon and Kyunghyun Park.
\newblock Portfolio selection with drawdown constraint on consumption: a
  generalization model.
\newblock {\em Mathematical Methods of Operations Research}, 93(2):243--289,
  2021.

\bibitem{LYZ21}
Xun Li, Xiang Yu, and Qinyi Zhang.
\newblock Optimal consumption with loss aversion and reference to past spending
  maximum.
\newblock {\em SIAM Journal on Financial Mathematics}, 15(1):121--160, 2024.

\bibitem{Li96}
Gary Lieberman.
\newblock {\em Second order parabolic differential equations}.
\newblock World scientific, 1996.

\bibitem{Mer69}
Robert Merton.
\newblock Lifetime portfolio selection under uncertainty: The continuous-time
  case.
\newblock {\em The review of Economics and Statistics}, pages 247--257, 1969.

\bibitem{SS02}
Mark Schroder and Costis Skiadas.
\newblock An isomorphism between asset pricing models with and without linear
  habit formation.
\newblock {\em The Review of Financial Studies}, 15(4):1189--1221, 2002.

\bibitem{YY22}
Yue Yang and Xiang Yu.
\newblock Optimal entry and consumption under habit formation.
\newblock {\em Advances in Applied Probability}, 54(2):433--459, 2022.

\bibitem{Yu15}
Xiang Yu.
\newblock Utility maximization with addictive consumption habit formation in
  incomplete semimartingale markets.
\newblock {\em The Annals of Applied Probability}, 25(3):1383--1419, 2015.

\end{thebibliography}
\end{document}